\numberwithin{equation}{section}
\newcommand{\R}{\mathbb{R}}
\newcommand{\M}{\mathcal M}
\newtheorem{theorem}{Theorem}[section]
\newtheorem{lemma}{Lemma}[section]
\newtheorem{corollary}{Corollary}[section]
\newtheorem{proposition}{Proposition}[section]
\newtheorem{remark}{Remark}[section]    
\numberwithin{equation}{section}
\newcommand\vep{\eps }
\newcommand\eps{\varepsilon}
\newcommand{\p}{\partial}
\newcommand{\beq}{\begin{equation}}
\newcommand{\eeq}{\end{equation}}
\begin{document}

\title[$p$-Gauss curvature flow]
{Long time regularity of \\ the $p$-Gauss curvature flow with flat side}

\author[G. Huang, X.-J. Wang and Y. Zhou]
{Genggeng Huang, Xu-Jia Wang and Yang Zhou}

\address[Genggeng Huang]
{School of Mathematical Sciences, Fudan University, Shanghai 200433, China.}
\email{genggenghuang@fudan.edu.cn}

\address[Xu-Jia Wang]
{Mathematical Sciences Institute, The Australian National University, Canberra, ACT 2601, Australia}
\email{Xu-Jia.Wang@anu.edu.au}

\address[Yang Zhou]{School of Mathematics and Applied Statistics, University of Wollongong, Wollongong, NSW 2522, Australia}
\email{yang.zhou18798@outlook.com}

\thanks{The first author was supported by NNSFC 12141105,   
the second author was supported by ARC DP200101084.}

\subjclass[2020]{53E40, 35K96, 35R35, 35B65.}

\keywords{Gauss curvature flow, Parabolic Monge-Amp\`ere equation, interface, regularity.}
\date{}
\maketitle

\begin{abstract}
In this paper, we prove the long time regularity of the interface in the $p$-Gauss curvature flow with flat side
in all dimensions for $p>\frac1n$.
Here the interface is the boundary of the flat part in the flow.
In dimension $2$, this problem was solved in \cite{DL2004} for $p=1$ and in \cite{KimLeeRhee2013} for $p\in(1/2,1)$.
We utilize the duality method to transform the Gauss curvature flow
to a singular parabolic Monge-Amp\`ere equation,
and prove the regularity of the interface by studying the asymptotic cone of the parabolic Monge-Amp\`ere equation
in the polar coordinates.
\end{abstract}

\baselineskip16pt
\parskip5pt

\vspace{5mm}
\section{Introduction}\label{intro}
\vspace{3mm}

Let  $\mathcal M_0$ be a closed convex hypersurface in $\R^{n+1}$
whose position function is given by  $X_0(\omega)$,  $\omega\in \mathbb{S}^n$.
In this paper we study the following Gauss curvature flow with power $p$,
\begin{equation}\label{GCF-p}
\begin{split}
\frac{\partial X}{\partial t}(\omega,t)&=-K^p(\omega,t)\gamma(\omega,t),\\
X(\omega,0)&=X_0(\omega),
\end{split}
\end{equation}
where $K$ is the Gauss  curvature of $\mathcal M_t=X(\omega, t)$
and $\gamma$ is the outer unit normal of $\mathcal M_t$ at $X(\omega, t)$.

The Gauss curvature flow (with $p=1$) was first studied by Firey \cite{F1974},
as a model for the wear of stones under tidal waves.
Tso  \cite{Tso1985} proved that if $\M_0$ is strictly convex, then there is a unique smooth solution
$\mathcal M_t$ which shrinks to a point as $t\to V/\omega_n$,
where $V$ is the volume enclosed by $\mathcal M_0$ and $\omega_n$ is the surface area of
the unit sphere $\mathbb S^n$.
An open question was whether $\mathcal M_t$ shrinks to a round point when $t\to V/\omega_n$,
and it was confirmed by Andrews \cite{A1999} in dimension two.
In high dimensions, Andrews,
Guan, and  Ni \cite{AGN2016, GN2017} proved that the normalized Gauss curvature flow converges to
a self-similar solution,
and Brendle, Choi and Daskalopoulos \cite{BCD2017} proved that the self-similar solution must be a sphere.
In  \cite{AGN2016, BCD2017}  the results were also proved for the Gauss curvature flow \eqref{GCF-p} for $p>\frac{1}{n+2}$.

If the initial hypersurface $\M_0$ contains a flat side,
it was proven in \cite{Andrews2000,C1985} that the solution $\M_t$ becomes uniformly convex
and smooth instantly  for $t>0$ if $p\le \frac 1n$.
However,  if $p>\frac{1}{n}$,
Hamilton \cite{H1993} and Andrews  \cite{Andrews2000}
observed that the flat side does not instantly bend under the Gauss curvature flow and
it will persist for a while before the solution $\M_t$ becomes uniformly convex.
 In this case,
the $C^\infty$ regularity of the strictly convex part of $\M_t$ was proved in \cite{Tso1985, C1985},
and the strict convexity of $\M_t-F_t$ and the $C^{1,\alpha}$ regularity
across the interface $\Gamma_t$ were obtained in \cite{DS2009}.
Here $F_t\subset\M_t$ is the flat side, the interface $\Gamma_t$ is the boundary of  the flat side.

A particularly interesting question is the regularity of the interface $\Gamma_t$ for $p>\frac 1n$.
When $p=1$, Daskalopoulos and Hamilton \cite{DH1999}  proved the regularity of $\Gamma_t$ for small $t>0$
under certain conditions on the initial hypersurface $\mathcal M_0$.
If $n=2$, the regularity of $\Gamma_t$ was obtained by Daskalopoulos and Lee \cite{DL2004}
for all time $t$ before it disappears, and
the result was extended to $p\in (1/2,1]$ in \cite{KimLeeRhee2013}.
However,  the regularity of  $\Gamma_t$ for large time $t$ is still open  in dimension two for $p>1$
and in high dimensions for all $p>\frac 1n$.

The objective of the paper is to establish the regularity of  $\Gamma_t$ for large time $t$ in all dimensions,
for all $p>\frac 1n$. Recently, a related Monge-Amp\`ere obstacle problem was investigated in  \cite{HTW}.
In this paper,  we will use some techniques from \cite{HTW},
but the argument in \cite{HTW} does not apply to the parabolic case directly,
due to  the lack of concavity of equation \eqref{v} and the strong degeneracy of the equation \eqref{v} near the interface. 
It is worth mentioning that due to the lack of concavity,
the global regularity of the first boundary value problem for equation \eqref{v}
was not solved until very recently in \cite{ZZ}.

Choosing the coordinates properly, we may assume that $\M_t\subset\{y_{n+1}\ge 0\}$ and
the flat side lies on the plane $\{y_{n+1}=0\}$.
For simplicity we assume that $\M_0$ has only one flat part.
Our argument also applies to the case when $\M_0$ has multiple flat parts
as long as they are strictly separate.
Then, locally $\M_t$  can be represented as the graph of a nonnegative function $v$,
\begin{equation*}
y_{n+1}=v(y_1, \cdots, y_n,t)
\end{equation*}
over a bounded domain $\Omega_t$, and $v$ satisfies the equation
\begin{equation}\label{v}
v_t=\frac{(\det D^2 v)^p}{(1+|D v|^2)^{\frac {(n+2)p-1}{2}}}.
\end{equation}
As $\M_t$ is a closed convex hypersurface,
we may assume that $|Dv(y,t)|\to \infty $ as $y\to \partial \Omega_t$.

For the short time existence of a solution with smooth interface,
it is necessary to assume  certain non-degeneracy conditions on the initial hypersurface $\M_0$  \cite{DH1999}.
Denote
\beq\label{def-g}
g=\big(\frac{\sigma_p+1}{\sigma_p} v\big)^{\frac{\sigma_p}{\sigma_p+1}},\ \ \ \sigma_p=n-\frac1p.
\eeq
 
According to \cite{DH1999}, see also  \cite{DL2004, D2014},
we assume the following non-degeneracy conditions.

\begin{itemize}
\item[(I1)] The level set $\{v(y, 0) = \eps \}$ is uniformly  convex for $\eps\ge 0$ small,
i.e., its principal curvatures have positive upper and lower bounds.
\item[(I2)] There exist a constant $\lambda_0\in (0, 1)$
such that $\lambda_0\le |Dg(y,0)|\le \lambda_0^{-1} \ \text{on} \ \Gamma_0$.
\end{itemize}

\noindent
Note that condition (I2) implies that  $v(y,0)\approx \text{dist}(y, \Gamma_0)^{(\sigma_p+1)/\sigma_p}$.
We also assume

\begin{itemize}
\item[(I3)]  $\M_0$ is locally uniformly convex and smooth away from the flat region, and 
$g(y,0)\in C_\mu^{2+\alpha}(\overline{\{v>0\}})$,  where $C_\mu^{2+\alpha}$ will be introduce in \eqref{1.15} below.

\end{itemize}

We have the following regularity and convexity results for the interface $\Gamma_t$.

\begin{theorem}\label{thmA}
Assume conditions {\rm (I1)-(I3)}.
Then if $p>\frac 1n$,  the interface $\Gamma_t$ is  smooth and uniformly convex $\forall\ t\in(0,T^*)$,
where $T^*>0$ is the time when the flat region disappears.
\end{theorem}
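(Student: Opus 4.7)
The plan is to combine the short-time existence theory under (I1)--(I3) with a continuity argument based on a priori estimates for the function $g$ of \eqref{def-g}. Let $T^{**}\in(0,T^*]$ be the supremum of times $t$ up to which $\Gamma_t$ is smooth and uniformly convex; by Daskalopoulos--Hamilton \cite{DH1999} together with \cite{DL2004, D2014} we have $T^{**}>0$. It suffices to rule out $T^{**}<T^*$, and this reduces to establishing, on any compact subinterval $[\delta, T^{**})$, uniform bounds of the form $g(\cdot,t)\in C^{2+\alpha}_\mu$ together with $\lambda(T^{**})\le |Dg(\cdot,t)|\le\lambda(T^{**})^{-1}$ on $\Gamma_t$, with constants depending only on $\delta$, $T^{**}$ and the initial data. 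Once such bounds are known, the regular phase extends past $T^{**}$ by the same short-time theory applied at a time just below $T^{**}$, contradicting the definition of $T^{**}$.

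The central device for these estimates is duality. Since $v(\cdot,t)$ is convex with minimum $0$ attained on the flat side $F_t$, its Legendre transform $u(\xi,t)=\sup_{y}\bigl(y\cdot\xi-v(y,t)\bigr)$ coincides, for $|\xi|$ small, with the support function $h_{F_t}(\xi)$ of $F_t\subset\R^n$. Thus $u(\cdot,t)$ carries a conical singularity at $\xi=0$ whose angular profile on $\S^{n-1}$ parametrizes the interface $\Gamma_t$, and a direct calculation converts \eqref{v} into a singular parabolic Monge--Amp\`ere equation for $u$. Smoothness and uniform convexity of $\Gamma_t$ become smoothness and uniform convexity of the asymptotic cone of $u$ at the singular point. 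Introducing polar coordinates $\xi=r\theta$, $r>0$, $\theta\in\S^{n-1}$, and using the substitution \eqref{def-g} as the natural resolution of the $\mathrm{dist}^{(\sigma_p+1)/\sigma_p}$ behaviour of $v$ implied by (I2), one obtains a boundary regularity problem as $r\to 0^+$ for which methods analogous to the elliptic analysis of \cite{HTW} become available.

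I would then carry out the estimates in two stages. First, a lower bound $|Dg|\ge\lambda(T^{**})>0$ on $\Gamma_t$, propagated from (I2) at $t=0$ by constructing barriers for the dual equation in polar coordinates; the matching upper bound and global $C^{1,\alpha}$ regularity of $g$ follow by adapting the arguments of \cite{HTW} to the parabolic setting. Second, a $C^{2+\alpha}_\mu$ estimate near $\Gamma_t$ via a Schauder-type theory for the singular parabolic Monge--Amp\`ere equation in polar coordinates, invoking the recent boundary regularity of \cite{ZZ} to bypass the lack of concavity. Combined with the interior $C^\infty$ regularity of the strictly convex part of $\M_t$ from \cite{Tso1985, C1985} and a standard bootstrap on the $g$-equation, this yields the full smoothness and uniform convexity asserted in the theorem.

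The hard part will be the $C^{2+\alpha}_\mu$ estimate near $\Gamma_t$. Two features of \eqref{v} obstruct the usual fully nonlinear machinery: the operator is not concave in $D^2 v$ when $p\ne 1$, so Evans--Krylov is out of reach, and the equation is strongly degenerate at $\Gamma_t$ for $p>1/n$, so Caffarelli--Cabr\'e boundary estimates do not apply directly. My strategy to overcome this combines a tangent-cone / blow-up analysis at points of $\Gamma_t$ in polar coordinates, whose limit problem on $\S^{n-1}\times\R$ is cone-invariant and therefore susceptible to direct analysis, with Bernstein-type differential inequalities along the flow for $|Dg|^2$ and for the tangential second derivatives of $g$, which transfer the short-time uniform convexity of $\Gamma_t$ to all of $(0,T^*)$ while respecting the polar structure of the dual equation.
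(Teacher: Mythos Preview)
Your overall framework matches the paper: duality via the Legendre transform $u$ of $v$, a continuity argument, spherical coordinates centered at the conical singularity of $u$, and a blow-up to a model problem. The paper does exactly this, introducing $\zeta(\theta,s,t)=u(\theta,r,t)/r$ with $s=r^{\sigma_p/2}$ so that $\zeta$ satisfies the singular parabolic Monge--Amp\`ere type equation \eqref{1-po2}, and then proving $\zeta\in C^{2+\alpha}$ up to $\{s=0\}$.

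There is, however, a genuine gap in your proposal at the crucial step. The entire scheme rests on showing that \eqref{1-po2} is \emph{uniformly parabolic} up to $s=0$, which in turn requires the radial growth estimate $u_{rr}(x,t)\lesssim |x|^{\sigma_p-1}$, equivalently $w=u-\phi\approx |x|^{\sigma_p+1}$ as in \eqref{wx}. In the elliptic obstacle problem of \cite{HTW} this comes from applying Pogorelov's technique on \emph{both} the $u$-side and the $v$-side. Here Pogorelov works on the $u$-side (giving $|D^2u|\lesssim |x|^{-1}$, Corollary~\ref{C3.1}), but it fails on the $v$-side precisely because \eqref{v} is not concave in $D^2v$ when $p\ne 1$. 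The paper is explicit that this is ``the main difficulty in proving the regularity of the interface.'' Your two proposed substitutes do not fill this gap: (i) the reference \cite{ZZ} concerns the \emph{first boundary value problem} for \eqref{v} on a fixed domain, and has nothing to say about the degenerate behaviour at the free boundary or about the $u_{rr}$ growth at the singular point; (ii) ``Bernstein-type differential inequalities for $|Dg|^2$ and tangential second derivatives of $g$'' is exactly the primal-side maximum-principle approach of \cite{DL2004}, which worked in dimension two for $p\le 1$ but does not extend, for the same concavity reason.

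What the paper does instead is a delicate maximum-principle argument directly on the $u$-side, Lemma~\ref{urr-sup}: it applies a maximum principle to the auxiliary quantity
\[
G(x,t)=\frac{x_ix_ju_{ij}}{(-u_t)^{\,\beta-4(p\beta+1)u_t}},\qquad \beta\in(1,\sigma_p+1),
\]
whose unusual exponent, with the $-4(p\beta+1)u_t$ correction, is tuned so that the bad terms arising from the non-concavity cancel at an interior maximum (see \eqref{3-5-8}--\eqref{3-5-11}). This yields \eqref{urr} and hence uniform parabolicity of \eqref{po2}. Only after this is in place does the blow-up analysis you sketched become effective: the paper proves a Bernstein theorem (Theorem~\ref{thmbern}) for the limit equation \eqref{blow1}, uses it to obtain $\zeta\in C^2$ up to $\{s=0\}$ (Section~\ref{s5}), and then bootstraps to $C^{2+\alpha}$ via weighted $W^{2,1}_q$ estimates of \cite{DP2020} and the degenerate Schauder theory of \cite{DH1999} (Theorem~\ref{thm4}). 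Your outline is missing the first of these ingredients and is vague on the second; without Lemma~\ref{urr-sup} the rest of the argument does not get off the ground.
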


Through the investigation of the regularity of the interface $\Gamma_t$, we have also obtained the regularity of the function $g$ near the interface.  See Remark \ref{rem6.2}.

To prove Theorem \ref{thmA},
let $u(\cdot, t)$ be the Legendre transformation of $v(\cdot, t)$, i.e.
\begin{equation*}
u(x,t)=\sup\{y\cdot x-v(y,t)\ |\ y\in\Omega_t\}, \ \   x\in D_yv(\Omega_t)=\R^n.
\end{equation*}
Then $u(x,t)$ solves
\begin{equation}\label{u}
\det D^2 u=\frac{1}{(-u_t)^{\frac 1p}(1+|x|^2)^{\frac{(n+2)p-1}{2p}}}+c_t\delta_0,
\end{equation}
where $c_t$ is the volume of the flat part.  Hence $c_t>0$ for $t\in [0,T^*)$.
Without loss of generality,  we assume that the origin is an interior point of the convex set $\{v(\cdot, t)=0\}$ for all $t\in[0,T^*)$.
Then for any given $T\in (0, T^*)$, there is a positive constant $\rho_0$ such that
\begin{equation}\label{rho-0}
B_{\rho_0}(0)\subset\subset  \{y\in\mathbb R^n \ |\  v(y,t)=0\}, \ \   \forall~t\in[0,T].
\end{equation}
It implies that $u(0,t)=0$ and  $u(x,t)>\rho_0 |x|\ \forall\ x\ne 0$.

We first prove that the interface $\Gamma_t$ moves at finite speed,
namely $u_t$ satisfies the linear growth condition
\begin{equation}\label{ut-b}
C^{-1} |x|\le -u_{t}(x,t)\le C |x|,
\end{equation}
for $x\ne 0$ near the origin, where $C>0$ is a positive constant.
We then use \eqref {ut-b} to prove the key growth estimates
\begin{equation}\label{wx}
C^{-1}|x|^{n+1-1/p}\le w(x,t)\le C |x|^{n+1-1/p} ,
\end{equation}
near $x=0$,
where
$$w:=u-\phi$$
and $\phi(x,t)$ is the tangential cone of $u$ at $(0,t)$.
Unlike the elliptic case, where similar estimates can be obtained by
applying Pogorelov's technique to $v$ and its Legendre transform $u$.
In the parabolic case, we can apply Pogorelov's technique to equation \eqref{u},
but not to equation \eqref{v}, due to the lack of concavity.
This is the main difficulty in proving the regularity of the interface.

Fortunately, we found the following auxiliary function,
$$G=:\frac{x_ix_j u_{x_ix_j}}{(-u_t)^{\beta-4(p\beta+1)u_t}} \ \   \text{for} \ \   \beta\in (1,\sigma_p+1).$$
By careful computation, we obtain an upper bound for $G$,
which enables us to prove the key estimates \eqref{wx}.
From \eqref{wx} we obtain the $C^{1,1}$ regularity for $u$ in the polar coordinates.
The estimates \eqref{wx} also imply that the non-degeneracy conditions (I1)-(I2) hold for all time $t\in [0, T^*)$.

Express $u(\cdot, t)$ in the the spherical coordinates $(\theta, r)$.
Then the uniform convexity and smoothness of the interface $\Gamma_t$
is equivalent to the uniform convexity and the smoothness of the asymptotic cone $\phi$ \cite{HTW}.
The uniform convexity of $\phi$ is given in Corollary \ref{C3.4}.
For the smoothness of $\phi$, we introduce the function
\begin{equation*}\label{Trans2}
   \zeta(\theta, s, t) =\frac{u(\theta, r, t)}{r},\ \ \
   s =r^{\frac{\sigma_p}2},
\end{equation*}
where $(\theta, r)$ is the spherical coordinates for $x$.
Then the smoothness of  $\phi$ is equivalent to that of $\zeta$ on the boundary $\{s=0\}$.
We will prove the regularity of $\zeta$ in Theorem \ref{thm4}.
Therefore Theorem \ref{thmA} follows.
 
The function $\zeta$ satisfies the  parabolic Monge-Amp\`ere type equation:
\begin{equation}\label{1-po2}
-\zeta_t\det \begin{pmatrix}
 \zeta_{ss}+\frac{2+\sigma_p}{\sigma_p}\frac{\zeta_s}{s}   &   \zeta_{s\theta_{1}}&\cdots& \zeta_{s\theta_{n-1}}\\[3pt]
 \zeta_{s\theta_1} & \zeta_{\theta_1\theta_1}+\zeta+\frac {\sigma_p}2 s\zeta_s & \cdots&\zeta_{\theta_1\theta_{n-1}}\\[3pt]
                    \cdots &\cdots &\cdots &\cdots  \\[3pt]
 \zeta_{s\theta_{n-1}} &\zeta_{\theta_1\theta_ {n-1}} & \cdots&\zeta_{\theta_{n-1}\theta_{n-1}}+\zeta+\frac {\sigma_p}2 s\zeta_s
\end{pmatrix}^p=\bar {F} (s),
\end{equation}
in $\{s>0\}$,
where
$$\bar {F} (s)= 4^p \sigma_p^{-2p} \big(1+s^{4/\sigma_p}\big)^{-\frac{(n+2)p-1}2} . $$
 Note that $\bar {F} $ is only H\"older continuous in general which is the obstacle for higher regularity of $\zeta$ in $s$.

By estimates \eqref{ut-b} and \eqref{wx},  
we infer that $\zeta\in C^{1,1}$,
and equation \eqref{1-po2} is uniformly parabolic.
We then use the techniques in \cite{HTW} to show  that  $\zeta\in C^2$,
namely $\zeta_{t}$ and $D^2_{\theta, r}\zeta$ are  continuous up to $\{s=0\}$.
By a weighted $W^{2,p}$ estimate for linear parabolic equations \cite{DP2020},
we conclude that $\zeta(\theta,s,t)\in C^{2+\alpha}(\mathbb S^n\times[0,1]\times (0,T])$.
Our notation $C^{1,1}, C^2$, and $ C^{2+\alpha}$ will be introduced below.

The paper is organized as follows.
In Section \ref{s2} we prove estimates  \eqref{ut-b}. We then prove \eqref{wx} in Section \ref{s3}.
Sections \ref{s4}, \ref{s5} and \ref{s6} are devoted to the higher regularity of $\zeta$.

\vspace{2mm}

{\bf Notation.}
Given two positive quantities $a$ and $b$, we denote
\beq
a\lesssim b
\eeq
if there is a constant $C>0$, depending only on $\mathcal M_0, n, p, T$,
such that $a \le C b$, where $T\in (0, T^*)$ is any given constant.
We also denote
\beq \label{aab}
a\approx b
\eeq
if  $a\lesssim b$ and $b\lesssim a$.
Given two convex domains $A$ and $B$ in $\R^{n+1}$,
we denote $A\sim B$ if there exist points
$x_0\in A$ and $y_0\in B$ such that $C^{-1}(A-x_0) \subset B-y_0 \subset C(A-x_0)$.

Let $\Omega$ be a domain in $\mathbb R^n$.
As usual, we define the norm $\|\cdot\|_{C^{k,\alpha}(\overline \Omega)}$ by
\beq \label{norm1}
\| U\|_{C^{k,\alpha}(\overline \Omega)}=\sup_{|\gamma|\le k}|D^\gamma U(x)|
  + \sup_{|\gamma| = k \atop x,y\in\Omega}\frac{|D^\gamma U(x)-D^\gamma U(y)|}{|x-y|^\alpha}  ,
\eeq
where $k\ge 0$ is an integer,  $\alpha\in (0, 1)$.

Denote the norm  $\|\cdot\|_{C^{k+\alpha,\frac{k+\alpha}{2}}_{x,t}(\overline Q)}$ for
the parabolic H\"older space by
\beq \label{norm2}
\| U\|_{C^{k+\alpha, \frac{k+\alpha}{2}}_{x,t}(\overline Q)}
  =\sup_{|\gamma|+2s\le k \atop (x,t)\in Q}|D_x^\gamma D^s_t U(x,t)| + \sup_{|\gamma|+2s
  = k \atop (x,t), (y,t')\in Q}\frac{|D_x^\gamma D^s_t U(x,t)-D_x^\gamma D^s_t U(y,t')|}{(|x-y|^2 +|t-t'|)^{\alpha/2}},
\eeq
where $Q$ is a domain in the space-time $\R^n\times\R^1$.

For simplicity we will abbreviate the notations as follows.

\begin{itemize}

\item For $k\ge 0$ and $\alpha\in (0, 1)$,
we will write $\|\cdot \|_{C^{k+\alpha, \frac{k+\alpha}{2}}_{x,t}(\overline Q)}$ as $\|\cdot \|_{C^{k+\alpha}(\overline Q)}$ for brevity.\\
Hence for a function $U$ which is independent of $t$, the $C^{k+\alpha}$ norm is given by \eqref{norm1},
and for a function $U$ which depends on $t$, the $C^{k+\alpha}$ norm is given by \eqref{norm2}.

\item
We denote by $\|\cdot\|_{C^{1,1}(\overline Q)}$ ($\|\cdot\|_{C^2 (\overline Q)}$, resp.)  the norms  of functions such that
$|D_x^\gamma D^s_t U|$ are bounded (continuous, resp.), $\forall\ |\gamma|+2s\le 2$.

\item We will use spherical coordinates $(\theta, r)$ in our argument below.
In this case, we use $\|\cdot\|_{C^{1,1}}$ ($\|\cdot\|_{C^2}$, resp.)  to denote the norms for the functions of which
$|D_{\theta, r}^\gamma D^s_t U|$ are bounded (continuous, resp.), $\forall\ |\gamma|+2s\le 2$.

\end{itemize}

To study the regularity of $\zeta$,
as in \cite{DH1999, DL2004} we introduce H\"older spaces
with respect to the metric $\mu$ in $ \R^{n-1}\times \mathbb R^+ \times \R$,
\begin{equation}
\mu[(x, t), (y, s)] = |x'-y'| + |\sqrt{x_{n}} - \sqrt{y_{n}}| + \sqrt{|t-s|}.
\end{equation}
Denote $\R^{n, +}=\R^{n-1}\times \R^+=\{x\in\R^n\ |\  x_n>0\}$.
Let $Q$  be a domain in $\mathbb R^{n,+} \times \R$.
We define  the norm
\begin{equation}
\|U\|_{C_\mu^\alpha({\overline Q})} = \sup_{p\in {Q}} |U(p) |+  \sup_{p_1,p_2\in{Q}} \frac{|U(p_1)-U(p_2)|}{\mu[p_1,p_2]^\alpha}.
\end{equation}
Let $\|\cdot \|_{C_\mu^{2+\alpha}({\overline Q})}$ be norm
\begin{equation}\label{1.15}
\begin{split}
\|U\|_{C_\mu^{2+\alpha}({\overline Q})} & = \|x_n U_{nn}\|_{C_\mu^\alpha({\overline Q})} +
{\Small\text{$\sum_{i=1}^{n-1}$}} \|\sqrt{x_n}U_{ni}\|_{C_\mu^\alpha({\overline Q})} + {\Small\text{$\sum_{i,j=1}^{n-1}$}} \|U_{ij}\|_{C_\mu^\alpha({\overline Q})} \\
& \ \   + {\Small\text{$\sum_{i=1}^{n}$}} \|U_{i}\|_{C_\mu^\alpha({\overline Q})} +  \|U_{t}\|_{C_\mu^\alpha({\overline Q})}+ \|U\|_{C_\mu^\alpha({\overline Q})}.
\end{split}
\end{equation}
For integer $k\ge 1$,
we denote the norm $\|\cdot\|_{C_\mu^{k,2+\alpha}({\overline Q})}$ by
\begin{equation*}
\|U\|_{C_\mu^{k,2+\alpha}({\overline Q})}=\sum_{|\gamma|+2s\le k}\|D^\gamma_xD^s_t U\|_{C_\mu^{2+\alpha}({\overline Q})}.
\end{equation*}

For  $p\in(1,\infty)$, we also need the weighted Sobolev spaces
$W^{1,1}_{p}(Q,d\nu)$ with the norm
\begin{equation*}
\|U\|_{W^{1,1}_{p}(Q,d\nu)} =\|U\|_{L^p_{\nu}({Q})} +   \|U_t\|_{L^p_{\nu}({Q})} 
+ \|DU\|_{L^p_{\nu}({Q})} \ \ \
\end{equation*}
and  $W^{2,1}_{p}({Q},d\nu)$ with the norm
\begin{equation*}
\|U\|_{ W^{2,1}_{p}({Q},d\nu)} =\|U\|_{L^p_{\nu}({Q})} + \|U_t\|_{L^p_{\nu}({Q})}
       +   \|DU\|_{L^p_{\nu}({Q})}    + \|D^2U\|_{L^p_{\nu}({Q})},
\end{equation*}
respectively, where
$
\|U\|_{L^p_{\nu}({Q})}=\Big(\int_{{Q}}|U(x,t)|^p x_n^b dxdt\Big)^{1/p}. 
$

\vspace{5mm}
\section{Estimates for the speed of the interface}\label{s2}
\vspace{3mm}

First we recall the short time existence and regularity in \cite{DH1999},
where Daskalopoulos and Hamilton proved the following.

\begin{proposition}[Theorem 9.1,\cite{DH1999}]\label{DH1999}
Assume the conditions {\rm (I1)-(I3)}.
Then, there exists a time $T_0>0$
such that \eqref{GCF-p} admits a solution $\mathcal M_t$ for $0< t\leq T_0$, and
at any given time $t\in (0, T_0]$, $\M_{t}$ satisfies the conditions {\rm (I1)-(I3)}.
\end{proposition}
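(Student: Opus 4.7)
The plan is to follow the strategy of Daskalopoulos--Hamilton by working with the pressure variable $g$ in place of $v$, setting up an iteration in the weighted H\"older space $C_\mu^{2+\alpha}$, and finally verifying that conditions (I1)--(I3) persist on a short time interval. Since $v\approx \mathrm{dist}(\cdot,\Gamma_0)^{(\sigma_p+1)/\sigma_p}$ degenerates at the interface while $g\approx\mathrm{dist}(\cdot,\Gamma_0)$ with $|Dg|\approx 1$ there, $g$ is the natural unknown: the interface is the zero set $\{g=0\}$, and one expects $g(\cdot,t)$ to be regular up to $\Gamma_t$ in $C_\mu^{2+\alpha}$.

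First, derive the equation for $g$. Using $v=\frac{\sigma_p}{\sigma_p+1}g^{(\sigma_p+1)/\sigma_p}$ in \eqref{v}, a direct computation yields
\[
D^2 v = g^{1/\sigma_p-1}\Bigl(g\,D^2 g+\tfrac{1}{\sigma_p}Dg\otimes Dg\Bigr),\qquad v_t=g^{1/\sigma_p}g_t,
\]
so that the equation for $g$ takes the form $g_t = \bigl[\det\bigl(g\,D^2g+\tfrac{1}{\sigma_p}Dg\otimes Dg\bigr)\bigr]^p\,\mathcal{H}(Dg,g)$ for a smooth positive $\mathcal{H}$. After locally flattening $\Gamma_t$ so that the flat side becomes $\{y_n\le 0\}$ and $g_{y_n}>0$ on $\{y_n=0\}$, the principal part has the degenerate parabolic structure $g_t\sim a\,y_n g_{y_ny_n}+\sum_{i,j<n}b_{ij}g_{y_iy_j}+\cdots$ introduced in \cite{DH1999}, and crucially the boundary weight of this operator is precisely the one encoded in the metric $\mu$ defining $C_\mu^{2+\alpha}$.

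Next, solve the Cauchy problem by a fixed-point argument. Freeze some $\tilde g$ close to $g(\cdot,0)$ in $C_\mu^{2+\alpha}$ and linearize the operator at $\tilde g$ to obtain a degenerate linear parabolic operator $L_{\tilde g}$. Apply the Schauder theory of \cite{DH1999} to invert $\partial_t-L_{\tilde g}$ with initial datum $g(\cdot,0)$; this produces a map $\tilde g\mapsto g$ which, thanks to the non-degeneracy (I2), is a contraction on a small ball in $C_\mu^{2+\alpha}\bigl(\overline{\{v(\cdot,0)>0\}}\times[0,T_0]\bigr)$ for sufficiently small $T_0$. Its fixed point is the desired $g$, giving $v$ and hence $\M_t$. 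Away from the interface \eqref{v} is uniformly parabolic, so classical Krylov--Safonov / Evans--Krylov estimates deliver the smoothness of the strictly convex part of $\M_t$.

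Finally, verify (I1)--(I3) at each $t\in(0,T_0]$. Condition (I3) is precisely the $C_\mu^{2+\alpha}$ bound provided by the fixed-point argument; condition (I2) follows from continuity in $t$ of $Dg|_{\Gamma_t}$ (after possibly shrinking $T_0$); and condition (I1) follows because the principal curvatures of $\{v=\varepsilon\}$ are continuous functions of $Dg$ and $D^2 g$, which are controlled in $C_\mu^\alpha$ up to $\Gamma_t$. The main obstacle is the Schauder estimate for $L_{\tilde g}$ in $C_\mu^{2+\alpha}$: because of the boundary degeneracy $y_n\partial_{y_n}^2$, this is not a standard parabolic Schauder theorem and requires the weighted fundamental-solution analysis of \cite{DH1999}. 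For general $p>1/n$ one needs only to verify that the first-order boundary coefficient produced by the reduction lies in the admissible range of that theory, which ultimately reduces to $\sigma_p=n-1/p>0$, i.e.\ to the hypothesis itself.
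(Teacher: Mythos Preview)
The paper does not prove this proposition itself; it is quoted as Theorem~9.1 of \cite{DH1999}, with Remark~\ref{rm1} noting that the argument there (done for $n=2$) extends verbatim to all dimensions $n\ge 3$. Your outline---passing to the pressure variable $g$, deriving the degenerate equation with principal part $y_n\partial_{y_ny_n}$ after flattening, and running a contraction-mapping argument in the weighted space $C_\mu^{2+\alpha}$ using the degenerate Schauder theory of \cite{DH1999}---is precisely the strategy of that reference, so your proposal is consistent with what the paper invokes.
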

\begin{remark}\label{rm1}
\rm{Proposition \ref{DH1999} is proved in} \rm{\cite{DH1999}} for $n=2$. The proof also holds for high dimension case $n\ge 3$. Moreover, for $0<t\le T_0$,  the proof also implies the following condition(see Theorem 9.2 in \cite{DH1999}):\par
(I4) $g_{ij}\tau_i g_j\in L^\infty(\{v>0\})$ where $\tau=(\tau_1,\cdots,\tau_n)$ is the tangent vector field of the level set of $g$, i.e.  $\tau\cdot \nabla g=0$. 
\\Therefore, choosing a sufficiently small $t_0>0$ as the initial time, we may assume that (I4) holds at $t=0.$
\end{remark}

The long time regularity of the solution $\M_t$ was studied in \cite{DS2009},
from which we quote the following results.

\begin{proposition}\label{DS2009}
Let $\mathcal M_t$ be a solution to \eqref{GCF-p}.
Then  $\M_t- F_t$ is locally uniformly convex and smooth for any $t\in (0, T^*)$,
and $\mathcal M_t\in C^{1,\alpha}$ for some $\alpha>0$ as long as $\mathcal M_t$ exists.
\end{proposition}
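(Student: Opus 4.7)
The plan is to split the proposition into two statements—smoothness on $\M_t-F_t$ and $C^{1,\alpha}$ regularity of $\M_t$—and handle each with different tools, using Proposition \ref{DH1999} as initial data after a short time $t_0>0$. In both cases I would represent $\M_t$ locally as the graph $y_{n+1}=v(y,t)$ satisfying \eqref{v}, and exploit the duality with \eqref{u} whenever convexity arguments work better on $u$.

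For the smoothness of $\M_t-F_t$, the argument is purely local on the strictly convex part. Away from the flat side we have $v>0$, $|Dv|<\infty$, and on any compact subdomain $\det D^2 v \ge \delta>0$, so \eqref{v} is a uniformly parabolic Monge-Amp\`ere equation with smooth right-hand side. Interior $C^2$ bounds then follow from Pogorelov-type estimates (as in Tso \cite{Tso1985}) applied either to $v$ (where strict convexity of the level sets is available) or, if concavity is needed, to $u$; Krylov-Safonov gives $C^{2,\alpha}$ and Schauder bootstrap gives $C^\infty$. Combining this with preservation of strict convexity of the non-flat region—standard via a tensor maximum principle on the second fundamental form—yields the first conclusion for all $t\in(0,T^*)$.

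For the $C^{1,\alpha}$ statement the crucial input is the two-sided growth $v(y,t)\approx \mathrm{dist}(y,\Gamma_t)^{(\sigma_p+1)/\sigma_p}$ that follows from (I2). From this one reads off $|Dv|\approx\mathrm{dist}(y,\Gamma_t)^{1/\sigma_p}$, so the outward unit normal of $\M_t$ extends continuously across $\Gamma_t$ and matches the vertical normal of the flat side with H\"older rate $\alpha=\min\{1,1/\sigma_p\}$. To make this quantitative I would build explicit barriers of the form $c_\pm(t)\,\mathrm{dist}(y,\Gamma_t)^{(\sigma_p+1)/\sigma_p}$ adapted to \eqref{v} and squeeze $v$ between them via the comparison principle. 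A mild approximation argument (regularizing the flat side by a strictly convex perturbation and passing to the limit) legitimizes the comparison despite the degeneracy at $\Gamma_t$.

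The main obstacle is propagating the non-degeneracy constant $\lambda_0$ of (I2) forward in time. Proposition \ref{DH1999} guarantees this only for short times, so on every compact interval $[0,T]\subset[0,T^*)$ one must rule out deterioration of the lower bound $|Dg|\ge\lambda_0$; without it the barrier construction above collapses and $C^{1,\alpha}$ could fail at large time. This is precisely the delicate step carried out in \cite{DS2009}, combining Alexandrov-Bakelman-Pucci estimates adapted to the degenerate operator with a careful analysis of the linearization on $g$ to prevent tangential collapse of $\Gamma_t$. Since this preservation is logically prior to the long-time analysis pursued in Sections \ref{s2}--\ref{s6} and proceeds by techniques orthogonal to the duality method central to the present paper, my plan is to cite the result as stated and reserve the remaining effort for the asymptotic-cone analysis in polar coordinates.
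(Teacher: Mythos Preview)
The paper does not prove this proposition at all: immediately after stating it, the authors write ``We refer the reader to \cite[Corollary 5.4]{DS2009} and \cite[Theorem 8.4]{DS2009} for the above results.'' Your final plan---to cite \cite{DS2009} and move on---therefore matches the paper exactly.

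That said, the sketch you give before arriving at this conclusion has a structural problem worth flagging. You propose to obtain the $C^{1,\alpha}$ regularity by first propagating the non-degeneracy condition (I2) forward in time and then reading off the H\"older rate of $Dv$ from the resulting growth $v\approx\mathrm{dist}(\cdot,\Gamma_t)^{(\sigma_p+1)/\sigma_p}$. In the logical architecture of the present paper this would be circular: the $C^{1,\alpha}$ regularity of Proposition~\ref{DS2009} is used as an \emph{input} in Section~\ref{s2} (see the proof of Lemma~\ref{lemfbs-1}, which invokes ``$v\in C^{1,\alpha}$'' to control $|Dv_\eps|$ near the interface, and the boundary control in Lemma~\ref{ut-sub}), and the propagation of (I1)--(I2) to all $t\in[0,T]$ is only established afterwards, in Corollary~\ref{C3.5}, once the estimates of Sections~\ref{s2}--\ref{s3} are in place. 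The result of \cite{DS2009} is in fact proved for general convex initial data without assuming (I2), via a different mechanism (normalized sections and Alexandrov-type estimates for the degenerate parabolic Monge--Amp\`ere equation), which is why it can legitimately serve as a starting point here. Your instinct to treat it as a black box is therefore the right one; just be aware that the barrier route you outline is not an independent alternative within this paper's framework.
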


We refer the reader to  \cite[Corollary 5.4]{DS2009} and  \cite[Theorem 8.4]{DS2009}
for the above results.
Proposition \ref{DS2009} implies that $v(y, t)$ is $C^{1,\alpha}$ near the interface $\Gamma_t$.

We first derive some estimates at time $t=0$. By Remark 2.1, we may assume conditions (I1)-(I4) hold at $t = 0.$
\begin{lemma}\label{urr-g-3}
Assume the conditions {\rm (I1)-(I4)}.
Then, for $r=|x|>0$ sufficiently small, there hold the estimates
\begin{equation}\label{3-ut}
\frac{1}{\bar {C}}  |x|\le -u_t(x,0)\le  \bar {C} |x| ,
\end{equation}
\begin{equation}\label{3-urr}
 u_{rr}(x, 0) \le \bar C r^{\sigma_p-1} = \bar C |x|^{n-1-1/p} ,
\end{equation}
where
\begin{equation}\label{u-rr}
u_{rr}=\frac{x_ix_ju_{ij}}{r^2}, \ \ \ r=|x|,
\end{equation}
and $\bar C$ is a positive constant depending on $ n, p, g(\cdot, 0)$.
\end{lemma}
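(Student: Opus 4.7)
The plan is to exploit Legendre duality between $u$ and $v$ at time $t=0$ to reduce both estimates to computations at the corresponding points $y = Du(x,0)$ near the interface $\Gamma_0$, where conditions (I1)--(I4) give precise quantitative control on $g$. First I would differentiate the Legendre identity $u(x,t) + v(y,t) = x\cdot y$ in $t$ with $x$ held fixed to obtain $-u_t(x,0) = v_t(y,0)$, and in $x$ twice to obtain $D^2 u(x,0) = (D^2 v(y,0))^{-1}$. Then estimate \eqref{3-ut} reduces to bounding $v_t$, and \eqref{3-urr} to bounding the quadratic form $x^T(D^2 v(y))^{-1}x$, at points $y$ close to $\Gamma_0$.

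Next, from $v = \tfrac{\sigma_p}{\sigma_p+1}\,g^{(\sigma_p+1)/\sigma_p}$ I would derive
\[
D^2 v = g^{1/\sigma_p}\,D^2 g + \tfrac{1}{\sigma_p}\,g^{1/\sigma_p-1}\,Dg\otimes Dg,
\]
and work in an orthonormal frame $\{e_\nu,e_1,\dots,e_{n-1}\}$ at $y$ with $e_\nu = Dg/|Dg|$ and $e_i$ tangent to the level sets of $g$. In this frame $D^2 g$ has block form: the tangential block $T$ is uniformly two-sided bounded by the uniform convexity of the level sets in (I1) together with (I2), the mixed entries are controlled by (I4), and the normal--normal entry $g_{\nu\nu}$ may vanish at $\Gamma_0$. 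A direct block computation singling out the dominant rank-one contribution is expected to yield
\[
\det D^2 v \approx g^{n/\sigma_p-1},\qquad e_\nu^T(D^2 v)^{-1}e_\nu \approx \sigma_p\,g^{1-1/\sigma_p}.
\]
Using $|Dv|^2 = g^{2/\sigma_p}|Dg|^2 \to 0$ together with the algebraic identity $p(n/\sigma_p-1)=1/\sigma_p$, equation \eqref{v} then gives $v_t \approx g^{1/\sigma_p}$. Finally, the Legendre correspondence $x = Dv(y) = g^{1/\sigma_p}Dg$ combined with (I2) yields $|x|\approx g^{1/\sigma_p}$, hence $g\approx |x|^{\sigma_p}$, so that $-u_t(x,0) \approx |x|$, which is \eqref{3-ut}. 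Since $x\parallel Dg$ implies $x/|x| = e_\nu$, we also obtain
\[
u_{rr}(x,0) = \frac{x^T(D^2 v(y))^{-1}x}{r^2} = e_\nu^T(D^2 v(y))^{-1}e_\nu \approx \sigma_p\,g^{1-1/\sigma_p} \approx r^{\sigma_p-1},
\]
which is \eqref{3-urr}.

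The hard part will be upgrading these ``$\approx$'' relations into genuine uniform two-sided bounds in an entire neighbourhood of $\Gamma_0$. The main subtlety is the degeneracy of $D^2 g$ in the normal direction: one has to verify that the rank-one perturbation $\tfrac{1}{\sigma_p}g^{1/\sigma_p-1}\,Dg\otimes Dg$ truly dominates $g^{1/\sigma_p}\,D^2 g$ in the normal--normal entry even as $g_{\nu\nu}\to 0$ at $\Gamma_0$, so that the block determinant and block inverse expansions above are robust. Conditions (I1), (I4) and the weighted $C_\mu^{2+\alpha}$-regularity in (I3) are precisely what keep the tangential block $T$ two-sided bounded and the mixed entries of $D^2 g$ tame, so that all constants in the expansion can be made uniform in a neighbourhood of $\Gamma_0$.
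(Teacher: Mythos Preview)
Your proposal is correct and follows essentially the same route as the paper: both set up the adapted frame with $e_\nu=Dg/|Dg|$, use $x=Dv=g^{1/\sigma_p}Dg$ to get $r\approx g^{1/\sigma_p}$ and $x/|x|=e_\nu$, and invoke (I1)--(I4) to control the blocks of $D^2g$ (tangential block two-sided bounded, $gg_{\nu\nu}=o(1)$, mixed entries bounded), from which $v_t\approx g^{1/\sigma_p}\approx r$ and hence \eqref{3-ut} follow exactly as you describe.

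The only methodological difference is in the endgame for \eqref{3-urr}. You compute $u_{rr}=e_\nu^T(D^2v)^{-1}e_\nu$ directly as the reciprocal of the Schur complement of the tangential block, which gives the bound in one line once the block estimates are in place. The paper instead first identifies the eigenvalues $\lambda_1,\dots,\lambda_{n-1}\approx r$, $\lambda_n\approx r^{1-\sigma_p}$ of $D^2v$, then bounds the angle $\theta$ between the top eigenvector and $e_\nu$ by $\sin^2\theta\lesssim r^{\sigma_p}$, and finally writes $u_{rr}\le \lambda_n^{-1}\cos^2\theta+\lambda_1^{-1}\sin^2\theta\lesssim r^{\sigma_p-1}$. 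Your Schur-complement route is more direct for the specific inequality \eqref{3-urr}; the paper's eigenvalue route has the side benefit of recording the full spectral picture $\lambda_i^{-1}\approx r^{-1}$ for $i<n$, which it reuses later (Corollary~\ref{C3.3}). One small imprecision in your write-up: it is not that $g_{\nu\nu}\to 0$ at $\Gamma_0$, but rather that $g\,g_{\nu\nu}=o(1)$ (from the weighted $C^{2+\alpha}_\mu$ regularity in (I3)); this is exactly what you need for the rank-one term $\tfrac{1}{\sigma_p}g^{1/\sigma_p-1}|Dg|^2$ to dominate in the Schur complement.
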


\begin{proof}
By the non-degeneracy conditions (I1)-(I2),  there exists a constant $\bar \lambda_0$ such that
\beq \label{nondc}
{\begin{split}
 \bar \lambda_0 & \le  |Dg(\cdot, 0)| \le \bar \lambda_0^{-1},\\
 \bar \lambda_0 & \le  \lambda_{\eps,  i} \le \bar \lambda_0^{-1},
 \end{split}}
 \eeq
where $\lambda_{\eps,  i} \ (i=1, \cdots, n-1)$ are the principal curvatures of the level set $\{v(y, 0)=\eps\}$,
for  $\eps\ge 0$ small.

By the definition of $g$ in \eqref{def-g}, we compute
\begin{equation}\label{3-g-2}
{\begin{split}
v_t&=g^{\frac{1}{\sigma_p}} g_t,\\
v_i &= g^{\frac{1}{\sigma_p}} g_i,  \\
v_{ij} &= g^{\frac{1}{\sigma_p}} g_{ij} + \frac{1}{\sigma_p}g^{\frac{1}{\sigma_p}-1} g_{i}g_j.
\end{split}}
\end{equation}
Hence by equation \eqref{v}, $g$ satisfies
\begin{equation}\label{g-eq}
g_t=\frac{\left(g \det \big(D^2 g + \frac1{\sigma_p}g^{-1} Dg\otimes Dg\big)\right)^p}{\big(1+g^{\frac2{\sigma_p}}|D g|^2\big)^{\frac {(n+2)p-1}{2}}}.
\end{equation}
Here $Dg\otimes Dg$ is a matrix with $(i,j)$-entries $g_ig_j$.

Let ${\xi}^{(1)},\cdots,{\xi}^{(n-1)}, \xi^{(n)}=\frac{D g}{|D g|}$ be an orthonormal frame
at a given point $y_0\in \{v(\cdot,0)>0\}$ near the interface $\Gamma_{0}$.
Denote $x_0=D v(y_0, 0)$ and $r=|x_0|$.
By the duality of $u$ and $v$, we have
$$\xi^{(n)}=\frac{D v(y_0, 0)}{|D v(y_0, 0)|} = \frac{x_0}{|x_0|} .$$

At the point $(y_0, 0)$, by the non-degeneracy conditions and \eqref{3-g-2}, we have
$$ r=|Dv| = g^{\frac{1}{\sigma_p}} |Dg| \approx g^{\frac{1}{\sigma_p}}.$$
By  \eqref{nondc},
the eigenvalues of matrix $(g_{{\xi}^{(k)}{\xi}^{(l)}})_{k,l=1}^{n-1}$ fall in the interval
$(\bar \lambda_0, \bar \lambda_0^{-1})$.
By \eqref{3-g-2},
\beq\label{vxx}
 (v_{{\xi}^{(k)}{\xi}^{(l)}})_{k,l=1}^{n-1}
\approx g^{\frac{1}{\sigma_p}} (g_{{\xi}^{(k)}{\xi}^{(l)}})_{k,l=1}^{n-1}
\approx g^{\frac{1}{\sigma_p}}{\rm I}_{(n-1)\times (n-1)}  \approx r {\rm I}_{(n-1)\times (n-1)}.
\eeq
By (I3)-(I4), one knows  $g(y,0)\in  C^{2+\alpha}_\mu (\overline{\{v>0\}})$ and  $g_{\xi^{(i)}\xi^{(n)}}\in L^\infty$,  $i\ne n$, i.e. 
\begin{equation*}
gg_{\xi^{(n)}\xi^{(n)}}=o(1)
,\quad \sqrt g g_{\xi^{(i)}\xi^{(n)}}=o(1),\quad i=1,\cdots,n-1.
\end{equation*}

  Hence
  \begin{equation*}
  g \det \big(D^2 g + \frac1{\sigma_p}g^{-1} Dg\otimes Dg\big)\approx \det (g_{{\xi}^{(k)}{\xi}^{(l)}})_{k,l=1}^{n-1}\approx 1
  \end{equation*}
  and \begin{equation}\label{vnn}
\begin{split}
v_{\xi^{(n)}\xi^{(n)}} 
  =o\left(g^{\frac{1}{\sigma_p} -1}\right)+\frac{1}{\sigma_p}|Dg|^2 g^{\frac 1{\sigma_p}-1}\approx g^{\frac 1{\sigma_p}-1}\approx r^{1-\sigma_p}.
\end{split}
\end{equation}
By equation \eqref{g-eq}, we have $g_t\approx 1$. It implies  $|u_t|=|v_t|\approx r$.
Let $\lambda_1\le \cdots\le \lambda_n$ be the eigenvalues of $D^2 v$.
Then by \eqref{vxx} and \eqref{vnn}, we see that
$\lambda_1, \cdots, \lambda_{n-1} \approx r$ and  $\lambda_n\approx r^{1-\sigma_p} $.

  Let $\nu$ be the direction corresponding to the maximal eigenvalue $\lambda_n$.
To calculate the angle $\theta$ between $\nu$ and $\xi^{(n)}$,
let $\xi' \in \text{span} ({\xi}^{(1)},\cdots,{\xi}^{(n-1)} )$ be the unit vector such that
$\nu, \xi^{(n)}$ and $\xi'$ lie in a 2-dim plane. Then
$\nu= \xi^{(n)} \cos\theta+\xi' \sin\theta$.
Hence we have
$$r \gtrsim v_{\xi'\xi'}\ge v_{\nu\nu} \sin^2\theta \gtrsim r^{1-\sigma_p} \sin^2\theta.$$
Hence we obtain
$\sin^2\theta  \lesssim r^{\sigma_p} $.

By the duality between $u$ and $v$,
 $\lambda_1^{-1},  \cdots, \lambda_n^{-1}$ are the eigenvalues of $D^2 u$ at $x_0$.
In the above we have shown that
\beq\label{d2u3}
 \lambda_1^{-1}\approx \cdots\approx \lambda_{n-1}^{-1} \approx r^{-1}.
 \eeq
Hence
$${\begin{split}
  u_{rr}  &\le  \lambda_n^{-1}\cos^2 \theta+\lambda_1^{-1} \sin^2\theta\\
             & \lesssim  r^{\sigma_p -1}\cos^2 \theta+ r^{-1} \sin^2\theta
              \lesssim  r^{\sigma_p -1}
             \end{split}} $$
 and \eqref{3-urr} follows.
\end{proof}

\begin{lemma}\label{lemfbs-1}
For any given $T\in (0,T^*)$, we have the estimate
\begin{equation}\label{vt}
v_t \lesssim |Dv|  \ \ \forall\    t\in[0,T].
\end{equation}
\end{lemma}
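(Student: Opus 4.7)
The plan is to split the argument by the magnitude of $|Dv|$, bounding $v_t$ via the Gauss curvature where $|Dv|$ is bounded below and passing to the function $g$ of \eqref{def-g} near the interface $\Gamma_t$. Rewriting \eqref{v} as
$$v_t=K^p\sqrt{1+|Dv|^2},\qquad K=\frac{\det D^2 v}{(1+|Dv|^2)^{(n+2)/2}},$$
and invoking Proposition \ref{DS2009} to bound the Gauss curvature $K$ of $\mathcal M_t$ uniformly on its strictly convex part for $t\in[0,T]$, one obtains $v_t\lesssim\sqrt{1+|Dv|^2}$; this already yields $v_t\lesssim|Dv|$ whenever $|Dv|\ge 1$. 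On a compact subset of $\{v>0\}$ bounded away from $\Gamma_t$, $|Dv|$ is bounded below and $v_t$ is bounded above, and the estimate is immediate.

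In a neighborhood of $\Gamma_t$, where $|Dv|$ is small, the identity $v_t/|Dv|=g_t/|Dg|$ from \eqref{3-g-2} reduces the target inequality to $g_t\lesssim|Dg|$. Under the non-degeneracy conditions (I1)--(I3), $|Dg|\ge\lambda_0>0$; and the $C^{2+\alpha}_\mu$ class from (I3) gives $g\cdot g_{\nu\nu}=O(1)$, $g^{1/2}g_{\nu\tau_i}=O(1)$, and $g_{\tau_i\tau_j}=O(1)$ as $g\to 0$. Writing $D^2 g$ in an orthonormal frame with first vector $\nu=Dg/|Dg|$ and applying the Schur-complement formula, one finds that the numerator $g\det(D^2 g+\sigma_p^{-1}g^{-1}Dg\otimes Dg)$ in \eqref{g-eq} remains bounded as $g\to 0^+$. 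Since the denominator in \eqref{g-eq} tends to $1$, this gives $g_t$ uniformly bounded near $\Gamma_t$, and combined with $|Dg|\ge\lambda_0$ one concludes $g_t\lesssim|Dg|$ in this region.

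The main obstacle is to justify the non-degeneracy (I1)--(I3) for all $t\in[0,T]$, not just at $t=0$. At $t=0$ they are assumed, and Proposition \ref{DH1999} together with Remark \ref{rm1} propagates them to $[0,T_0]$; I would cover $[0,T]$ by iterating this short-time result with fresh initial times $t_0=T_0,\,2T_0,\ldots$, after verifying that the relevant structural constants remain uniformly bounded as long as $T<T^*$. A more robust alternative that avoids the iteration is a direct maximum principle argument on the auxiliary function $H=g_t-K|Dg|$ for a suitable constant $K$: compute the linearization $L$ of the $g$-equation, verify $LH\le 0$, and combine the parabolic boundary data---Lemma \ref{urr-g-3} at $t=0$, the Gauss curvature bound as $|Dv|\to\infty$, and $H\equiv 0$ on $\Gamma_t$ from the $C^{1,\alpha}$ regularity of Proposition \ref{DS2009}---to conclude $H\le 0$. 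The delicate issues there are the degeneracy of $L$ as $g\to 0$ and ensuring that $K$ can be chosen uniformly in $t\in[0,T]$.
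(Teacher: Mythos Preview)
Your approach has a genuine circularity problem. Step 3 --- bounding $g_t$ near $\Gamma_t$ via the $C^{2+\alpha}_\mu$ regularity of $g$ --- requires the non-degeneracy conditions (I1)--(I3) to hold for all $t\in[0,T]$, and you correctly identify this as ``the main obstacle.'' But neither of your proposed remedies closes the gap. The iteration of Proposition~\ref{DH1999} gives a new short-time interval whose length depends on the constants in (I1)--(I3) at the restart time; without an independent uniform bound on those constants the intervals may shrink and fail to cover $[0,T]$. Establishing such a uniform bound is precisely the content of Corollary~\ref{C3.5}, which comes \emph{after} Lemma~\ref{lemfbs-1} and relies on the chain of estimates in Sections~\ref{s2}--\ref{s3} that this lemma initiates. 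Your alternative (b), a maximum principle for $H=g_t-K|Dg|$, also presupposes enough regularity of $g$ up to $\Gamma_t$ for all $t$ to make the computation meaningful, and the linearization of \eqref{g-eq} is degenerate at $g=0$; you do not carry out either point.

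The paper sidesteps the circularity entirely by a scaling--comparison argument that needs (I1)--(I3) only at $t=0$. One introduces
\[
v_\eps(y,t)=\frac{v\big((1+\eps)y,(1-A\eps)t\big)}{1+B\eps},
\]
checks that for suitable $A,B$ it is a supersolution of \eqref{v} in $\Sigma(\delta_0)=\{v<\delta_0,\ 0<t\le T\}$, and verifies $v_\eps\ge v$ on the parabolic boundary using only: the non-degeneracy at $t=0$ (for the bottom), convexity of $v$ and the $C^{1,\alpha}$ regularity from Proposition~\ref{DS2009} (for smallness of $|Dv|$ when $v<\delta_0$), and the interior bound on $v_t$ along $\{v=\delta_0\}$ from \cite{C1985}. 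Differentiating $v_\eps\ge v$ at $\eps=0$ yields $0\ge Bv-y\cdot Dv+At\,v_t$, and for $t>T_0$ this gives $v_t\lesssim|Dv|$ directly. The point is that the comparison only consumes information that is already available before any long-time non-degeneracy has been established.
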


\begin{proof}
The following proof is inspired by \cite{DL2004}.
By Proposition  \ref{DH1999} and Lemma \ref{urr-g-3},  we see that  \eqref{vt} holds for $t\in [0, T_0]$.
It suffices to verify \eqref{vt} for $t\in (T_0, T]$.
Denote $$v_\eps (y,t)=\frac{v\big((1+\eps )y, (1-A\eps )t\big)}{1+B\eps } , $$
where $A,B$ are two positive constants to be determined.
By direct computation,
\begin{equation}\label{101}
v_{\eps ,t}=  \eta\,
\frac{ (\det D^2v_\eps )^p}{(1+|Dv_\eps |^2)^{\frac{(n+2)p-1}{2}}} ,
\end{equation}
where
$$\eta=: \frac{(1-A\eps )(1+B\eps )^{np-1} (1+|Dv_\eps |^2)^{\frac{(n+2)p-1}{2}}}
{(1+\eps )^{2np} \big(1+\big(\frac{1+B\eps }{1+\eps }\big)^2|Dv_\eps |^2 \big)^{\frac{(n+2)p-1}{2}}} .$$
By Taylor's expansion,
\begin{equation*}
\begin{split}
1+\left(\frac{1+B\eps }{1+\eps }\right)^2|Dv_\eps |^2
 & =1+\big(1+2(B-1)\eps +O(\eps ^2)\big) |Dv_\eps |^2\\
&=(1+|Dv_\eps |^2) \left(1+2(B-1)
{\small\text{$\frac{|Dv_\eps |^2}{1+ |Dv_\eps |^2}$}} \eps  +O(\eps ^2)\right).
\end{split}
\end{equation*}
Hence
{\small
\begin{equation*}
\begin{split}
 \eta
&=\left(1+ (-A+(np-1)B-2np )\eps +O(\eps ^2)\big)
   \big(1-((n+2)p-1)(B-1) {\small\text{$\frac{ |Dv_\eps |^2}{1+|Dv_\eps |^2}$}} \eps  +O(\eps ^2)\right)\\
&=1+\Big(-A+(np-1)B-2np-((n+2)p-1)(B-1) {\small\text{$\frac{|Dv_\eps |^2}{1+|Dv_\eps |^2}$}} \Big) \eps +O(\eps ^2)\\
& \ge 1+\eps  +O(\eps ^2)
\end{split}
\end{equation*}
}
if $A\in (0,1)$, $B=\frac{3np+3}{n-1}$, and $|D v_\eps |\le \frac{np-1}{4(n+2)p}$.
The latter is true in
$$\Sigma(\delta_0)=\{(y,t)\ |\ v(y,t)<\delta_0, 0 < t\le T\}\ \ \ \text{for $\delta_0>0$ small},$$
by  $v\in C^{1,\alpha}$ and $Dv=0$ when $v=0$.
Hence
\begin{equation}
v_{\eps ,t}\ge \frac{(\det D^2v_\eps )^p}{(1+|Dv_\eps |^2)^{\frac{(n+2)p-1}{2}}}\ \ \text{in}\  \Sigma(\delta_0)
\end{equation}
when $\eps >0$ is small.

Next we want to apply the comparison principle to $v$ and $v_\eps$ in $\Sigma(\delta_0)$.
To compare the values of $v$ and $v_\eps $ on the parabolic boundary $\p_p\Sigma(\delta_0)$,
we compute
\begin{equation*}
\frac{d\tilde v_\eps(y, 0)}{d\eps } |_{\eps =0}
 =-\frac{\sigma_p}{1+\sigma_p}B\tilde v(y, 0) +y\cdot D\tilde v(y, 0)  ,
\end{equation*}
where for brevity we denote $\tilde v_\eps= v_\eps ^{\frac{\sigma_p}{1+\sigma_p}}$ and
$\tilde v= v^{\frac{\sigma_p}{1+\sigma_p}}$.
By the non-degeneracy condition (I2), we have
$${\begin{split}
 & 0\le \tilde v_\eps(y, 0)\lesssim \text{dist}(y, \Gamma_0),\\
 & |D\tilde v(y, 0)|\ge \lambda_0.
 \end{split}} $$
By \eqref{rho-0} and the uniform convexity of $\Gamma_0$, it implies that
$y\cdot D[\tilde v(y, 0)] \ge C \rho_0\lambda_0 $.
Hence we obtain
$$
\frac{d\tilde v_\eps (y, 0)}{d\eps } |_{\eps =0}\ge \frac C2 \rho_0\lambda_0>0 ,
$$
when $\eps$ and $\delta_0$ are small.
It implies that
\beq\label{vy0}
v_\eps (y, t)\ge v(y, t) \ \   \text{on} \ \   \partial_p \Sigma(\delta_0)\cap\{t=0\}.
\eeq
On the remaining part of the parabolic boundary $\partial_p \Sigma(\delta_0)\cap\{t>0\}$, we compute
\begin{equation}\label{dve}
\begin{split} \frac{dv_\eps (y,t)}{d\eps }|_{\eps =0 }
&=- Bv(y,t)+y\cdot Dv(y,t)- A t\, v_t(y,t).
\end{split}
\end{equation}
We claim that
$$- Bv(y,t)+y\cdot Dv(y,t)\ge v(y, t) =\delta_0$$
when $\delta_0$ is sufficiently small.
To see this, consider the one dimensional convex function
$\varphi(s)=v(sy,t)$. Choose $s_0\in (0, 1)$ such that $s_0 y\in \Gamma_t$.
Then by $\varphi(s_0)=0$ and the convexity, $\varphi(1)\le (1-s_0) \varphi'(1)$,
i.e., $v(y, t)\le (1-s_0) y\cdot Dv(y, t)$.
Hence
$$y\cdot Dv(y,t)\ge \frac {1}{1-s_0} v(y,t)\ge \frac{\delta_0}{1-s_0}.$$
Note that $s_0\to 1$ when $\delta_0\to 0$.
The claim follows.

By \cite{C1985}, $ \|v_t\|_{L^\infty(\partial_p \Sigma(\delta_0)\cap \{t>0\})}$ is uniformly bounded.
Hence by \eqref{dve},
\beq\label{vyt}
 \frac{dv_\eps (y,t)}{d\eps }|_{\eps =0 }
\ge \delta_0 - A t \, \|v_t\|_{L^\infty(\partial_p \Sigma(\delta_0)\cap \{t>0\})}\ge 0
\eeq
when $A>0$ is small.

Combining \eqref{vy0} and \eqref{vyt} yields
$v_\eps (y,t)\ge v(y,t)\ \ \text{on}\  \p_p\Sigma(\delta_0)$.
By the comparison principle, we then obtain
\begin{equation*}
v_\eps (y,t)\ge v(y,t) \ \   \text{in} \ \   \Sigma(\delta_0).
\end{equation*}
Differentiating the above inequality at $\eps =0$, by \eqref{dve} we obtain
\begin{equation}\label{bav}
\begin{split}
0\ge Bv(y,t)-y\cdot Dv(y,t)+At\, v_t.
\end{split}
\end{equation}
As noted at the beginning, it suffices to consider the case $t>T_0$.
When $t>T_0$, from \eqref{bav}, we obtain
\begin{equation}
v_t\le \frac{diam(\mathcal M_0) |Dv|}{AT_0}.
\end{equation}
Lemma \ref{lemfbs-1} is proved.
\end{proof}

\begin{corollary}\label{cor-2.1}
Let $(\theta,r_\eps  (\theta,t))$, $\theta\in\mathbb S^{n-1}$
be the spherical parametrization of $\{v(y,t)=\eps \}$ for $\eps >0$ small.
Then we have
\beq\label{dre}
-\frac{dr_\eps  (\theta,t)}{dt} \lesssim 1  \ \ \forall\    t\in[0,T].
\eeq
\end{corollary}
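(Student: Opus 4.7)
The plan is to differentiate the defining relation for $r_\eps$ implicitly in $t$, substitute Lemma \ref{lemfbs-1}, and exploit the fact that the origin is deep inside the level set to control the radial derivative $v_r$ from below by $|Dv|$.

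First, I would write $v(\theta, r_\eps(\theta,t), t)=\eps$, where $(\theta, r)$ are the spherical coordinates for $y$. Since $v(0,t)=0<\eps$ and $\{v(\cdot,t)\le \eps\}$ is convex, the ray from the origin in direction $\theta$ meets $\{v=\eps\}$ at a unique point, so $r_\eps$ is well defined and smooth in $t$ by the implicit function theorem (using the $C^{1,\alpha}$ regularity of $v$ from Proposition \ref{DS2009} and $|Dv|\ne 0$ on the level set). Differentiating in $t$ yields
\begin{equation*}
-\frac{dr_\eps(\theta,t)}{dt}=\frac{v_t}{v_r}=\frac{v_t}{\frac{y}{|y|}\cdot Dv}\bigg|_{y=r_\eps(\theta,t)\theta}.
\end{equation*}

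Next, I would produce the lower bound $v_r\gtrsim |Dv|$ from the convex-geometric input \eqref{rho-0}. At a point $y$ on the level set $\{v(\cdot,t)=\eps\}$, the outer unit normal is $\nu=Dv(y,t)/|Dv(y,t)|$. Since $\{v(\cdot,t)\le\eps\}$ is a convex set containing the closed ball $B_{\rho_0}(0)$, the supporting hyperplane at $y$ with normal $\nu$ gives $\nu\cdot z\le \nu\cdot y$ for every $z\in B_{\rho_0}(0)$; taking $z=\rho_0\nu$ produces $\nu\cdot y\ge \rho_0$. Combined with the uniform bound $|y|\le\mathrm{diam}(\M_0)$, this gives
\begin{equation*}
v_r=\frac{y\cdot Dv}{|y|}=\frac{|Dv|\,(\nu\cdot y)}{|y|}\gtrsim |Dv|.
\end{equation*}

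Finally, Lemma \ref{lemfbs-1} provides the matching upper bound $v_t\lesssim |Dv|$ uniformly on $[0,T]$. Dividing and cancelling $|Dv|$ (which is nonzero on the level set) yields
\begin{equation*}
-\frac{dr_\eps(\theta,t)}{dt}=\frac{v_t}{v_r}\lesssim 1,
\end{equation*}
which is \eqref{dre}. The only potentially subtle point is justifying $v_r\ne 0$ so that the implicit differentiation and division are legal; this is automatic from the strict convexity of $\{v\le\eps\}$ together with $0\in\mathrm{int}\{v\le\eps\}$, which forces $Dv\ne 0$ on $\{v=\eps\}$ and $\nu\cdot y\ge\rho_0>0$.
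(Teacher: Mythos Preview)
Your proof is correct and follows essentially the same route as the paper: implicit differentiation of $v(\theta,r_\eps(\theta,t),t)=\eps$ followed by an appeal to Lemma~\ref{lemfbs-1}. The paper's proof is terser and leaves the bound $v_r\gtrsim |Dv|$ implicit, whereas you spell it out via the supporting-hyperplane argument using \eqref{rho-0}; this extra detail is a welcome clarification rather than a different idea.
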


\begin{proof}
Differentiating $v(\theta,r_\eps  (\theta,t),t)=\eps$ in $t$ yields
\begin{equation*}
\frac{dr_\eps  (\theta,t)}{dt}\cdot\left(\nabla v\cdot \frac{y}{|y|}\right)+v_t=0.
\end{equation*}
Hence \eqref{dre} follows from Lemma \ref{lemfbs-1}.
\end{proof}

Let $u(\cdot, t)$ be the Legendre transform of $v(\cdot, t)$. Then we have the following corollary.

\begin{corollary}\label{ut-sup}
We have the estimate
\beq \label{utx0}
-u_t(x,t) \lesssim |x|  \ \ \forall\    t\in[0,T].
\eeq
\end{corollary}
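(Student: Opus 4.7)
The plan is to deduce this directly from Lemma \ref{lemfbs-1} via the Legendre duality between $u(\cdot,t)$ and $v(\cdot,t)$. Since $u(x,t)=\sup_{y}\{y\cdot x-v(y,t)\}$, for each $(x,t)$ with $x$ in the strictly convex part (i.e.\ $x\ne 0$ so that the sup is attained at a unique interior point $y^\ast=Du(x,t)$ with $x=Dv(y^\ast,t)$), differentiating the identity $u(x,t)=x\cdot Du(x,t)-v(Du(x,t),t)$ in $t$ and using $x=Dv(y^\ast,t)$ to cancel the terms involving $\partial_t Du$, one obtains the standard duality relation
\begin{equation*}
u_t(x,t)=-\,v_t\bigl(Du(x,t),t\bigr).
\end{equation*}

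With this identity in hand, I would simply evaluate Lemma \ref{lemfbs-1} at $y=Du(x,t)$. Since $|Dv(y^\ast,t)|=|x|$ by the duality, Lemma \ref{lemfbs-1} gives
\begin{equation*}
-u_t(x,t)=v_t\bigl(Du(x,t),t\bigr)\lesssim \bigl|Dv(Du(x,t),t)\bigr|=|x|
\end{equation*}
uniformly for $t\in[0,T]$, which is precisely \eqref{utx0}.

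The only subtlety, and the part that requires a line of justification, is that the duality identity needs $v(\cdot,t)$ to be differentiable at $y^\ast=Du(x,t)$ with $Dv(y^\ast,t)=x$; this is supplied by Proposition \ref{DS2009}, which gives $C^{1,\alpha}$ regularity of $v$ on $\mathcal M_t$, together with the fact that for $x\ne 0$ the contact point $y^\ast$ lies in the strictly convex part of $\mathcal M_t$ (using the assumption \eqref{rho-0} that a fixed ball $B_{\rho_0}(0)$ sits inside the flat set, so that no $x\ne 0$ in a neighborhood of the origin corresponds to a flat-side contact). For $x=0$ the estimate is trivial since the right-hand side is zero and $u_t(0,t)=0$ by $u(0,t)=0$ and the convexity of $u$ in $x$. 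This is essentially a one-line argument; the real work was already packaged into Lemma \ref{lemfbs-1}, so there is no serious obstacle here.
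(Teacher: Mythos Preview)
Your proof is correct and is precisely the argument the paper has in mind: the paper's own proof consists of the single sentence ``This follows from the duality between $u$ and $v$,'' and you have simply unpacked that duality relation $u_t(x,t)=-v_t(Du(x,t),t)$ and combined it with Lemma~\ref{lemfbs-1} and $|Dv(Du(x,t),t)|=|x|$. One minor remark: for $x=0$ you do not need convexity --- $u(0,t)\equiv 0$ already gives $u_t(0,t)=0$ directly.
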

\begin{proof}
This follows from the duality between $u$ and $v$.
\end{proof}

Lemma \ref{lemfbs-1} and Corollaries \ref{cor-2.1} - \ref{ut-sup}
imply that the interface $\Gamma_t$ moves at finite speed.
Next we show the interface $\Gamma_t$ moves at positive speed.
\begin{lemma}\label{ut-sub}
We have
\beq \label{utx}
-u_t(x,t) \gtrsim  |x|   \ \ \forall\    t\in[0,T].
\eeq
\end{lemma}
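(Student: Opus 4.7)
My plan is to prove Lemma \ref{ut-sub} by constructing an explicit barrier subsolution and applying the parabolic maximum principle to the equation for $L := -u_t$. Differentiating \eqref{u} in $t$ at points $x\ne 0$ (so the Dirac term drops out) gives, after a logarithm and using $u_{tij} = -L_{ij}$, the quasi-linear parabolic equation
\begin{equation*}
L_t = p\, L\, u^{ij} L_{ij}, \qquad x\ne 0,\ t\in(0,T^*),
\end{equation*}
where $(u^{ij}) = (D^2 u)^{-1}$ is positive definite. Since the origin lies in the interior of the flat region, $L(0,t) = v_t(0,t) = 0$ for all $t\in[0,T^*)$.

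The barrier I would take is $\underline L(x,t) := c\,|x|\,e^{-\alpha t}$ with $c>0$ small and $\alpha>0$ to be chosen. A direct computation gives
\begin{equation*}
u^{ij}\underline L_{ij} = \frac{c\,e^{-\alpha t}}{|x|}\Bigl(\operatorname{trace}(u^{ij}) - \frac{u^{ij}x_ix_j}{|x|^{2}}\Bigr) \ge 0,
\end{equation*}
where the nonnegativity follows from $x^{T}(D^2 u)^{-1}x/|x|^{2}\le 1/\lambda_{\min}(D^2 u)\le \operatorname{trace}((D^2 u)^{-1})$. Hence $\underline L_t = -\alpha\underline L \le 0 \le p\underline L u^{ij}\underline L_{ij}$ for any $\alpha>0$, so $\underline L$ is a subsolution. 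To arrange $\underline L \le L$ on the parabolic boundary of $\Omega_{r_0}:=\{0<|x|<r_0\}\times(0,T]$: at $t=0$, estimate \eqref{3-ut} in Lemma \ref{urr-g-3} gives $L(x,0)\ge \bar C^{-1}|x|\ge c|x|$ once $c\le \bar C^{-1}$; on the outer boundary $\{|x|=r_0\}$, continuity of $L$ together with $L>0$ for $x\ne 0$ provides a lower bound $L\ge m(r_0,T)>0$, and we choose $c\le m\,e^{\alpha T}/r_0$; at the inner ``boundary'' $x=0$ both $L$ and $\underline L$ vanish. Applying the maximum principle to $\Phi := L-\underline L$ on annular regions $\{r_1\le |x|\le r_0\}\times(0,T]$ and letting $r_1\to 0$ yields $L(x,t)\ge c\,e^{-\alpha T}|x|$ on $\{|x|<r_0\}\times[0,T]$, which is the desired estimate.

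The main obstacle is justifying the maximum principle for $\Phi$. Subtracting the equation for $L$ from the inequality for $\underline L$ and using the identity $L L_{ij} - \underline L\,\underline L_{ij} = \Phi\, L_{ij} + \underline L\,\Phi_{ij}$ gives
\begin{equation*}
\Phi_t - p\underline L\,u^{ij}\Phi_{ij} - p\,(u^{ij}L_{ij})\,\Phi \ge 0,
\end{equation*}
so the zero-order coefficient $p\,u^{ij}L_{ij} = L_t/L$ must be locally bounded. This holds on each fixed annulus $\{r_1\le |x|\le r_0\}$ since $L$ is smooth and strictly positive there by Proposition \ref{DS2009}, but the coefficient blows up as $|x|\to 0$. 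The passage $r_1\to 0$ is carried out by a Hopf-type argument: because $\underline L$ vanishes linearly at the origin and $L$ vanishes at least linearly there at $t=0$ by Lemma \ref{urr-g-3}, the contribution of the inner boundary $\{|x|=r_1\}$ to any putative negative minimum of $\Phi$ is of order $r_1$ and hence negligible as $r_1\to 0$, so the interior inequality $\Phi\ge 0$ persists in the limit.
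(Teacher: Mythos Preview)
Your barrier construction and the subsolution property of $\underline L = c|x|e^{-\alpha t}$ are correct, but the comparison step has a genuine gap. After linearizing, the inequality for $\Phi = L - \underline L$ is
\[
\Phi_t - p\,\underline L\,u^{ij}\Phi_{ij} - c_0\Phi \;\ge\; 0,\qquad c_0 := p\,u^{ij}L_{ij} = \frac{L_t}{L}.
\]
At a putative interior negative minimum $(x_0,t_0)$ (which \emph{is} attained, since $|\Phi|\lesssim |x|$ by Corollary~\ref{ut-sup}) you have $\Phi_t\le 0$, $u^{ij}\Phi_{ij}\ge 0$, $\Phi<0$; the inequality then only forces $c_0(x_0,t_0)\Phi(x_0,t_0)<0$, which is \emph{consistent} with $\Phi<0$ whenever $c_0(x_0,t_0)>0$. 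No a~priori information on the sign or size of $c_0=L_t/L$ is available at this stage: you have neither a lower bound on $L$ (that is the goal) nor any bound on $L_t=-u_{tt}$. The $r_1\to 0$ ``Hopf-type'' passage does not repair this. On each annulus $\{r_1\le |x|\le r_0\}$ the standard $e^{-\lambda t}$ substitution requires $\lambda>\sup c_0=:M_{r_1}$ and yields only $\min\Phi\ge -Cr_1\,e^{M_{r_1}T}$; since $M_{r_1}$ is not uniformly bounded, this does not improve to $\Phi\ge 0$ in the limit.

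The paper's proof sidesteps this by comparing $-u_t$ with $u$ rather than with $|x|$: set $\widehat G=(-u_t+\varepsilon)/u$. Because $u$ itself satisfies the equation, differentiating $\log\det D^2u=-\frac1p\log(-u_t)+\cdots$ gives $\frac{u_{tt}}{u_t}+pu^{ii}u_{tii}=0$, and combining this with the first–derivative relations at an interior infimum of $\widehat G$ produces the exact identity
\[
\frac{(\log\widehat G)_t}{u_t}+p\,u^{ii}(\log\widehat G)_{ii}=-\frac{np+1}{u}<0,
\]
an immediate contradiction with no zero–order coefficient to control. Since $u\approx |x|$ near the origin, the conclusion $-u_t\gtrsim u$ gives \eqref{utx}. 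The natural fix to your argument is therefore to replace the barrier $c|x|$ by a small multiple of $u$ (equivalently, study the ratio $(-u_t+\varepsilon)/u$), which is exactly the paper's choice.
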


\begin{proof}
Let $\widehat G=\frac{-u_t+\varepsilon}{u}$, where $\varepsilon>0$ is a constant.
Suppose the infimum $\inf_{S_T} \widehat G(x,t)$ is attained at the point $(x_0, t_0)$,
where $S_T= \{(x, t) \ | \ u(x,t)<1, 0<t\le T\}$.
Since  $\varepsilon>0$, we see that $x_0\ne 0$.
If $(x_0, t_0)\not\in \p_p S_T$, the parabolic boundary of $S_T$,
then at $(x_0, t_0)$ we have
\begin{equation}\label{2-2-1}
\begin{split}
&0=(\log  \widehat G)_i=\frac{u_{ti}}{u_t-\varepsilon}-\frac{u_i}{u},\\
&0\le (\log  \widehat G)_{ii}=\frac{u_{tii}}{u_t-\varepsilon}-\frac{u_{ti}^2}{(u_t-\varepsilon)^2}- \Big(\frac{u_{ii}}{u}-\frac{u_i^2}{u^2}\Big),\\
&0\ge (\log  \widehat G)_t=\frac{u_{tt}}{u_t-\varepsilon}-\frac{u_t}{u}.
\end{split}
\end{equation}
By a rotation of coordinates, we may assume $D^2u(x_0,t_0)$ is diagonalized. Hence,
\begin{equation}
\begin{split}
0&\le \frac{(\log  \widehat G)_t}{u_t}+pu^{ii}(\log  \widehat G)_{ii}\\
&=\frac{1}{u_t-\varepsilon}\Big(\frac{u_{tt}}{u_t}+pu^{ii}u_{tii}\Big)-\frac{(np+1)}{u}-p\frac{u^{ii}u_{ti}^2}{(u_t-\varepsilon)^2}+\frac{p u^{ii}u_i^2}{u^2}\\
&= -\frac{(pn+1)}{u}<0.
\end{split}
\end{equation}
This contradiction implies that $(x_0, t_0)$ must be a point
on the parabolic boundary of $S_T$.
Sending $\varepsilon\to 0$,  by Lemma \ref{urr-g-3} and Proposition \ref{DS2009},
we obtain \eqref{utx}.
\end{proof}

Similarly to Corollaries \ref{cor-2.1} - \ref{ut-sup}, we have

\begin{corollary}\label{cor-2.3}
Let $(\theta,r_\eps  (\theta,t))$, $\theta\in\mathbb S^{n-1}$ be the spherical parametrization of $\{v(y,t)=\eps \}$ for $\eps >0$ small.
We have
\beq\label{dre1}
-\frac{dr_\eps  (\theta,t)}{dt} \gtrsim 1  \ \ \forall\     t\in[0,T].
\eeq
\end{corollary}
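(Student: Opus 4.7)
The plan is to imitate the derivation of Corollary \ref{cor-2.1}, this time using the lower bound from Lemma \ref{ut-sub} rather than the upper bound from Lemma \ref{lemfbs-1}. Differentiating the defining relation $v(\theta, r_\eps(\theta,t), t) = \eps$ in $t$ yields
$$-\frac{dr_\eps(\theta,t)}{dt} \;=\; \frac{v_t(\theta, r_\eps, t)}{v_r(\theta, r_\eps, t)}, \qquad v_r := Dv \cdot \frac{y}{|y|},$$
so I need a lower bound for the numerator $v_t$ and an upper bound for the denominator $v_r$, both in terms of a common quantity; $|Dv|$ is the natural choice.

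The lower bound on $v_t$ is obtained by transferring Lemma \ref{ut-sub} from $u$ to $v$ via Legendre duality. At dual points $y$ and $x = Dv(y,t)$, the envelope theorem gives $u_t(x,t) = -v_t(y,t)$ and $|x| = |Dv(y,t)|$, so Lemma \ref{ut-sub} reads $v_t(y,t) \gtrsim |Dv(y,t)|$. For the denominator, the trivial bound $v_r \le |Dv|$ suffices. Strict positivity of $v_r$ (needed to divide) follows from \eqref{rho-0} and the convexity of $v(\cdot, t)$: since $v(0,t)=0$ and $v(y,t)=\eps$, the supporting hyperplane inequality at $y$ applied to the point $0$ gives $Dv(y,t)\cdot y \ge \eps$, hence $v_r \ge \eps/|y| > 0$.

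Combining these ingredients,
$$-\frac{dr_\eps(\theta,t)}{dt} \;\ge\; \frac{v_t(\theta, r_\eps, t)}{|Dv(\theta, r_\eps, t)|} \;\gtrsim\; 1,$$
uniformly in $\theta$, in $\eps>0$ small, and in $t\in[0,T]$. There is no genuine obstacle in this argument; it is a direct consequence of Lemma \ref{ut-sub} combined with the Legendre duality already exploited earlier in this section. The only point worth keeping track of is that the correspondence $y \leftrightarrow x = Dv(y,t)$ sends a neighborhood of the interface $\Gamma_t$ to a neighborhood of the origin in $x$-space, so that Lemma \ref{ut-sub} applies in the regime relevant for small $\eps$; this is automatic since $|Dv| \to 0$ on approach to $\Gamma_t$ by Proposition \ref{DS2009}.
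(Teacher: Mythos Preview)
Your proof is correct and follows exactly the route the paper intends: it transfers the lower bound $-u_t \gtrsim |x|$ from Lemma \ref{ut-sub} to the statement $v_t \gtrsim |Dv|$ via Legendre duality (which the paper records separately as Corollary \ref{vt-inf}), and then repeats the differentiation of the level-set relation as in Corollary \ref{cor-2.1}. The paper gives no proof beyond the sentence ``Similarly to Corollaries \ref{cor-2.1}--\ref{ut-sup}'', and your write-up is precisely the intended filling-in.
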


\begin{corollary}\label{vt-inf}
We have the estimate
\beq\label{vt1}
v_t(x,t) \gtrsim  |\nabla v|  \ \ \forall\     t\in[0,T].
\eeq
\end{corollary}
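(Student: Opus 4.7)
The plan is to deduce Corollary \ref{vt-inf} from Lemma \ref{ut-sub} in the same way that Corollary \ref{ut-sup} was deduced from Lemma \ref{lemfbs-1}, i.e.\ by transferring the estimate across the Legendre transform.

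Recall that $u(\cdot,t)$ and $v(\cdot,t)$ are Legendre conjugates: at any smooth point the correspondence $x = Dv(y,t)$, $y = Du(x,t)$ holds, together with the identity
\begin{equation*}
u(x,t) + v(y,t) = x\cdot y.
\end{equation*}
Differentiating this identity in $t$ and using that $x = Dv(y,t)$ is the maximizer in the definition of $u$ (so that the $y$-derivative of $y\cdot x - v$ vanishes when treating $y$ as a function of $t$), one obtains the standard relation
\begin{equation*}
u_t(x,t) = -\,v_t(y,t), \qquad \text{where } x = Dv(y,t).
\end{equation*}
In particular $|x| = |Dv(y,t)|$.

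Thus for any $y$ in the strictly convex part near $\Gamma_t$, applying Lemma \ref{ut-sub} at the corresponding point $x = Dv(y,t)$ yields
\begin{equation*}
v_t(y,t) \;=\; -u_t(x,t) \;\gtrsim\; |x| \;=\; |Dv(y,t)|,
\end{equation*}
which is exactly \eqref{vt1}. For points where $v(y,t)=0$ the estimate is trivial since both sides vanish (by $v\in C^{1,\alpha}$ and $Dv=0$ on the flat side). There is no real obstacle here; the only mild subtlety is to make sure the Legendre correspondence is well-defined at the points under consideration, which is guaranteed by Proposition \ref{DS2009} (smoothness and strict convexity of $\mathcal M_t - F_t$) away from the flat side, and by continuity up to $\Gamma_t$.
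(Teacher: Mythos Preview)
Your proof is correct and is exactly the approach the paper intends: the paper presents Corollaries \ref{cor-2.3} and \ref{vt-inf} without proof under the heading ``Similarly to Corollaries \ref{cor-2.1}--\ref{ut-sup}'', and the proof of Corollary \ref{ut-sup} is simply ``This follows from the duality between $u$ and $v$.'' You have spelled out this duality argument correctly, using $u_t(x,t)=-v_t(y,t)$ with $x=Dv(y,t)$ to transfer the bound $-u_t\gtrsim|x|$ of Lemma \ref{ut-sub} to $v_t\gtrsim|Dv|$.
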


\vspace{5mm}
\section{Growth estimates at the singular point}\label{s3}
\vspace{3mm}

Recall that $u(\cdot, t)$ satisfies equation \eqref{u}, namely
\begin{equation}\label{3.1}
\det D^2 u =\frac{1}{(-u_t)^{\frac1p}(1+|x|^2)^{\frac{(n+2)p-1}{2p}}}+ c_t\delta_0.
\end{equation}
In Section \ref{s2}, we proved the growth estimates \eqref{utx0} and \eqref{utx} for $u_t$ near the origin.
In this section, we establish crucial growth estimates for $w=u-\phi$ at the origin,
where $\phi(\cdot, t)$ is the tangential cone of $u(\cdot, t)$ at $(0, t)$, namely,
$\phi(\cdot, t)$ is a homogeneous function of degree one satisfying
$|u(x,t)-\phi(x,t)|=o(r)$ as $r=|x|\to 0$, for any given $t\in [0, T]$.

\begin{lemma}\label{lemc1}
Near the origin, we have
\begin{equation}\label{l3.1}
|u(x,t)-\phi(x,t)|\le r\omega(r)  \ \ \forall\   t\in [0,T]
\end{equation}
for a function $\omega(r)\to 0$ as $r\to 0$ independent of $t\in [0, T]$, where $r=|x|$.
\end{lemma}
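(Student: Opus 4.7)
The plan is to apply a Dini-type monotone convergence argument to the rescaled family
$\Phi_r(x,t) := u(rx,t)/r$ on the compact set $\mathbb{S}^{n-1} \times [0,T]$, using the finite-speed estimate from Corollary \ref{ut-sup} to force equicontinuity in $t$ uniformly in $r$.

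\textbf{Step 1 (monotone convergence to $\phi$).} The convexity of $u(\cdot,t)$ together with $u(0,t)=0$ gives, for $0 < r_1 \le r_2$,
\[
u(r_1 x,t) = u\bigl(\tfrac{r_1}{r_2}(r_2 x) + (1-\tfrac{r_1}{r_2})\cdot 0,\,t\bigr)\le \tfrac{r_1}{r_2}\, u(r_2 x,t),
\]
so $\Phi_r(x,t)$ is monotone non-decreasing in $r$. The pointwise limit is exactly the tangent cone $\phi(x,t)\ge \rho_0|x|$, which is convex and $1$-homogeneous in $x$. Thus $\Phi_r \downarrow \phi$ as $r \to 0^+$ pointwise on $(\mathbb{R}^n\setminus\{0\}) \times [0,T]$.

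\textbf{Step 2 (joint continuity on the compact set).} Each $\Phi_r$ is continuous in $x$ (being convex) and, by Corollary \ref{ut-sup}, satisfies
\[
|\Phi_r(x,t_1)-\Phi_r(x,t_2)|=r^{-1}\bigl|u(rx,t_1)-u(rx,t_2)\bigr|\le r^{-1}\cdot C\,r|x|\cdot|t_1-t_2|=C|x|\,|t_1-t_2|,
\]
uniformly in $r$. Passing to the monotone limit, $\phi(x,t)$ inherits the same Lipschitz modulus in $t$ and is convex (hence continuous) in $x$. Therefore both $\Phi_r$ and $\phi$ lie in $C(K)$ for $K := \mathbb{S}^{n-1}\times [0,T]$.

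\textbf{Step 3 (Dini and conclusion).} Since $\Phi_r \downarrow \phi$ monotonically on the compact set $K$ with all terms continuous, Dini's theorem gives uniform convergence. Setting
\[
\omega(r) := \sup_{(\theta,t)\in K}\bigl(\Phi_r(\theta,t)-\phi(\theta,t)\bigr),
\]
we have $\omega(r) \searrow 0$ as $r \to 0^+$. For any $x\ne 0$ with $r=|x|$, using the $1$-homogeneity of $\phi$,
\[
|u(x,t)-\phi(x,t)| = r\bigl(\Phi_r(x/r,t)-\phi(x/r,t)\bigr) \le r\,\omega(r),
\]
which is \eqref{l3.1}.

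\textbf{Main obstacle.} The single substantive input is the uniform linear growth $|u_t(x,t)|\lesssim |x|$ from Corollary \ref{ut-sup}; without this, the family $\{\Phi_r\}$ could oscillate in $t$ as $r\to 0$ and defeat the Dini argument. Equivalently, this step amounts to saying that the flat set $F_t = \partial u(0,t)$ (whose support function is $\phi(\cdot,t)$) varies continuously with $t$, which is the content of the finite-speed estimate of Section \ref{s2}. Once that is in hand, the remainder of the proof is a soft compactness argument.
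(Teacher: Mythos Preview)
Your argument is correct and, if anything, a little cleaner than the paper's. Both proofs rest on the same single analytic input, Corollary~\ref{ut-sup} (the finite-speed bound $|u_t|\lesssim |x|$), but they package it differently. The paper argues by contradiction: it assumes a bad sequence $(x_k,t_k)\to(0,\bar t)$, rescales $u_k(x)=u(r_kx,t_k)/r_k$, and uses $|u_t|\lesssim|x|$ twice---once to show $u_k\to\phi(\cdot,\bar t)$ and once (via the equivalent statement that $\Gamma_t$ moves at finite speed) to show $\phi(\cdot,t_k)\to\phi(\cdot,\bar t)$---reaching a contradiction. You instead exploit the monotonicity of $r\mapsto \Phi_r$ coming from convexity and $u(0,t)=0$, use the same bound to get equicontinuity in $t$ uniformly in $r$, and then invoke Dini on the compact set $\mathbb S^{n-1}\times[0,T]$. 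Your route avoids the contradiction setup and yields the modulus $\omega(r)$ explicitly as a monotone quantity; the paper's blow-up viewpoint is closer in spirit to the rescaling arguments used repeatedly later in Sections~\ref{s3}--\ref{s5}. One small point worth making explicit in your Step~2: the joint continuity of $\phi$ on $\mathbb S^{n-1}\times[0,T]$ follows because $\phi(\cdot,t)$ is convex with the uniform two-sided bound $\rho_0\le\phi\le\Phi_1$ on $\mathbb S^{n-1}$, hence uniformly Lipschitz in $\theta$, which combined with your Lipschitz bound in $t$ gives joint continuity.
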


\begin{proof}
If \eqref{l3.1} is not true,
there exists a sequence $(x_k,t_k)\rightarrow (0,\bar t)$ such that
\begin{equation}\label{u-pex}
u(x_k,t_k)-\phi(x_k,t_k)\ge \eps _0 |x_k|
\end{equation}
for some constant $\eps _0>0$.  Make the scaling
$u_k(x)=\frac{u(r_k x,t_k)}{r_k}$,
where $r_k=|x_k|$.
By Corollary \ref{ut-sup} and Lemma \ref{ut-sub} we have
\begin{equation*}
\Big|u_k(x)-\frac{u(r_k x,\bar t)}{r_k}\Big| \le C|x||t_k-\bar t|\rightarrow 0  \ \   \text{as}  \ \   k\rightarrow +\infty,
\end{equation*}
which implies that $u_k(x) \rightarrow \phi(x,\bar t)$ locally uniformly in $\mathbb R^n$.
Notice that the interface $\Gamma_t$ moves at finite speed.
Hence $\phi(x,t_k)\rightarrow \phi(x,\bar t)$ locally uniformly in $\mathbb R^n$.
Hence we obtain
\begin{equation*}
u_k\Big(\frac{x_k}{r_k}\Big)-\phi\Big(\frac{x_k}{r_k},t_k\Big) \rightarrow 0  \ \   \text{as}  \ \   k\rightarrow +\infty,
\end{equation*}
in contradiction with \eqref{u-pex}.
\end{proof}

Let $f(x, t)$ be a function defined in ${ Q }\subset \R^n\times \R$.
We say $f$ is parabolically convex if it is convex in $x$ and non-increasing in $t$.
Denote ${ Q }(t)=\{x \ | \  (x,t)\in { Q }\}$, $ \underline t =\inf \{t\ |  \ { Q }(t)\neq \emptyset\}$.
The parabolic boundary of ${ Q }$ is
\begin{equation*}
\partial_p { Q } =\cup_{t} \big\{\partial { Q }(t) \times \{ t\} \big\}\cup \big\{{{ Q }( \underline t )} \times \{ \underline t\} \big\} .
\end{equation*}
We say  ${ Q }$ is bowl-shaped
if ${ Q }(t)$ is convex for each $t$ and ${ Q }(t_1)\subset { Q }(t_2)$ for $t_1\le t_2$.

\begin{lemma}\label{CH-IUMJ}
Let ${ Q }\subset \mathbb{R}^{n}\times\R$ be a bounded bowl-shaped domain.
Let $u\in C^4({ Q })\cap C^0(\overline { Q })$ be a parabolically convex function satisfying the equation
\begin{equation}\label{pogo-eq}
\begin{split}
-u_t (\det D^2u)^p &= f(x) \ \   \text{in} \ \   \overline { Q } \backslash \partial_p{ Q },\\
 u&=0 \ \   \text{on} \ \   \partial_p { Q },
\end{split}
\end{equation}
where $p>0$. Then we have the estimate
\begin{equation}\label{pogo-es}
(-u)|D^2 u|\le C
\end{equation}
for a constant $C>0$ depending only on $n, p, \|\p_xu\|_{L^\infty({ Q })}, \|\log f\|_{C^{1,1}({ Q })}$.
\end{lemma}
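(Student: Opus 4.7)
The plan is to adapt Pogorelov's classical interior $C^2$ estimate to the parabolic Monge--Amp\`ere equation \eqref{pogo-eq}. Inside $Q\setminus \partial_p Q$ the parabolic convexity combined with $u|_{\partial_p Q} = 0$ gives $u < 0$, while equation \eqref{pogo-eq} together with $f>0$ forces $u_t < 0$ and $\det D^2 u > 0$ strictly there. By spatial convexity, $|D^2 u| \lesssim \max_{|\xi|=1} u_{\xi\xi}$, so it suffices to bound $(-u)\,u_{\xi\xi}$ uniformly in the unit direction $\xi$. For a constant $\gamma > 0$ to be fixed later I would introduce the auxiliary function
$$W(x, t, \xi) := \log u_{\xi\xi}(x, t) + \log(-u(x, t)) + \tfrac{\gamma}{2}|Du(x, t)|^2$$
on $(Q\setminus \partial_p Q) \times \{|\xi|=1\}$. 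Since $|Du|$ is bounded by hypothesis and $\log(-u)\to -\infty$ as $(x,t)\to \partial_p Q$, an exhaustion/approximation argument shows that $\sup W$ is attained at some interior point $(x_0, t_0, \xi_0)$; after a rotation I may assume $\xi_0 = e_1$ and $D^2 u(x_0, t_0)$ is diagonal.

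At $(x_0, t_0)$ we have $W_i = 0$, $D_x^2 W \le 0$, and $W_t \ge 0$. Applying the linearized (backward-parabolic, because $u_t < 0$) operator $L\eta := \eta_t/u_t + p u^{ij}\eta_{ij}$ therefore yields $LW(x_0,t_0) \le 0$. To evaluate $LW$, differentiate $\log(-u_t) + p\log\det D^2 u = \log f$ once in $e_k$,
$$\frac{u_{tk}}{u_t} + p u^{ij} u_{ijk} = (\log f)_k,$$
and twice in $e_1$,
$$\frac{u_{t11}}{u_t} + p u^{ij} u_{ij11} = \frac{u_{t1}^2}{u_t^2} + p u^{ij} u^{k\ell} u_{ik1} u_{j\ell 1} + (\log f)_{11}.$$
The first identity turns the $\tfrac{\gamma}{2}|Du|^2$ piece of $LW$ into $\gamma p\sum_k u_{kk} + O\bigl(\|\partial_x u\|_\infty \|\nabla\log f\|_\infty\bigr)$. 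The second identity, inserted into $L(\log u_{11})$ and combined with the relation $u_{11i}/u_{11} = -u_i/u - \gamma u_i u_{ii}$ (no sum) extracted from $W_i = 0$, exhibits the classical Pogorelov cancellation: the bad third-order term $-p u^{ij} u_{11i} u_{11j}/u_{11}^2$ is absorbed by the concavity term $p u^{ij} u^{k\ell} u_{ik1} u_{j\ell 1}/u_{11}$. Collecting everything produces an inequality of the form
$$0 \ge LW(x_0, t_0) \ge \gamma p \sum_k u_{kk} + \frac{np+1}{u} - C,$$
where $C$ depends only on $n$, $p$, $\|\log f\|_{C^{1,1}(Q)}$, and $\|\partial_x u\|_{L^\infty(Q)}$.

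Fixing $\gamma$ large enough and using $\sum_k u_{kk}\ge u_{11}$ rearranges the above into $(-u)\,u_{11}(x_0, t_0) \le C$; since $W$ attains its supremum at $(x_0, t_0, \xi_0)$, the estimate $(-u)\,u_{\xi\xi} \le C$ propagates to all of $Q$ and all unit $\xi$, giving \eqref{pogo-es}. The two main technical obstacles are: (i) justifying that the supremum of $W$ is attained in the interior, which requires an approximation argument since $u_{\xi\xi}$ can a priori blow up along tangential directions at $\partial_p Q$; and (ii) executing the third-order cancellation in the parabolic setting. The extra term $u_{t1}^2/(u_t^2 u_{11})$ generated by the twice-differentiated log-equation fortunately has the favourable sign and so helps rather than hinders, while the assumption $\log f \in C^{1,1}$ is exactly what is needed to treat $(\log f)_{11}/u_{11}$ and $u_k(\log f)_k$ as controlled error.
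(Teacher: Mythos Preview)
Your plan is correct and is precisely the standard parabolic Pogorelov argument; the paper itself does not give a proof but simply cites \cite{GH1998} and \cite{XB} (with the remark that \eqref{pogo-es} is obtained from \cite{XB} by taking $\rho(t)=-e^{-pt}$), and those references carry out exactly the maximum-principle computation you describe with the same auxiliary function $\log u_{\xi\xi}+\log(-u)+\tfrac{\gamma}{2}|Du|^2$ and the same linearized operator $L=\partial_t/u_t+p\,u^{ij}\partial_{ij}$.

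One small point worth making explicit in the execution: in your final displayed inequality the term $-p\,u^{ii}u_i^2/u^2$ coming from $L\bigl(\log(-u)\bigr)$ does not fall under ``$-C$'' directly; it is absorbed by retaining the positive leftover $\tfrac{p}{u_{11}^2}\sum_{i\ge2}u^{ii}u_{11i}^2$ from the concavity step and substituting $u_{11i}/u_{11}=-u_i/u-\gamma u_iu_{ii}$ from $W_i=0$, which produces $p\sum_{i\ge2}u^{ii}u_i^2/u^2$ with the right sign. You allude to this under ``classical Pogorelov cancellation,'' so just make sure it is spelled out when you write the details.
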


Pogorelov type estimates can be found in many articles.
For parabolic Monge-Amp\`ere equations it can be found in \cite{GH1998} and \cite{XB}.
Estimate \eqref{pogo-es} can be found in \cite{XB}, by choosing $\rho(t)= - e^{-pt}$ there.

Applying estimate \eqref{pogo-es} to $u-\ell $ for a proper  linear function $\ell$,
we obtain the following corollary similarly as  \cite{HTW, S05}.
Note that we need Lemma \ref{lemc1} to guarantee $|u-\ell |\approx |x|$ uniformly.

\begin{corollary}\label{C3.1}
Let $u(\cdot, t)$ be the Legendre transform of $v(\cdot, t)$ which satisfies equation \eqref{3.1}.
Then we have
\beq\label{po-es1}
|x| |D^2 u(x,t)|\lesssim 1 \ \ \text{for} \  (x,t)\in B_1(0)\backslash \{0\}\times[0,T].
\eeq
\end{corollary}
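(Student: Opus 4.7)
The plan is to apply the Pogorelov-type estimate of Lemma~\ref{CH-IUMJ} to $w=u-h$ in a suitable parabolic section $Q_h$, following the elliptic strategy of \cite{HTW, S05}. The role of Lemma~\ref{lemc1} is to provide the uniform two-sided bound $c|x|\le u(x,t)\le C|x|$ in a neighborhood of the origin for all $t\in[0,T]$. Indeed, $u(0,t)=0$ and $u\ge\rho_0|x|$ by \eqref{rho-0}, while the tangent cone $\phi(\cdot,t)$ is the support function of the uniformly bounded flat region $\{v(\cdot,t)=0\}$, hence $\phi(x,t)\le C|x|$; combined with $u=\phi+o(|x|)$ from Lemma~\ref{lemc1}, this yields the upper bound. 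Thus the linear function $\ell\equiv 0$ satisfies $|u-\ell|\approx|x|$ uniformly, which is the condition needed for the Pogorelov argument.

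For fixed $(x_0,t_0)$ with $r_0:=|x_0|\in(0,1)$, I would set $h:=2Cr_0$ and
\begin{equation*}
Q_h:=\{(x,t)\in\R^n\times(0,T]\ |\ u(x,t)<h\}.
\end{equation*}
Parabolic convexity of $u$ (convex in $x$, non-increasing in $t$ by equation \eqref{u}) makes $Q_h$ bowl-shaped, and the lower bound $u\ge\rho_0|x|$ confines $Q_h$ to $\{|x|<h/\rho_0\}$. The function $w:=u-h$ satisfies $w\le 0$ in $Q_h$ with $w=0$ on the lateral part of $\partial_p Q_h$, is parabolically convex, and away from the origin solves
\begin{equation*}
(-w_t)(\det D^2 w)^p=F(x),\qquad F(x)=(1+|x|^2)^{-((n+2)p-1)/2},
\end{equation*}
with $F$ smooth and $\|\log F\|_{C^{1,1}(Q_h)}$ uniformly bounded; the $C^{1,\alpha}$ regularity of $u$ away from the interface in Proposition~\ref{DS2009} bounds $\|Du\|_{L^\infty(Q_h)}$.

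The hard part is handling the origin, where three issues arise simultaneously: the origin lies inside $Q_h$ (since $u(0,t)=0<h$), equation \eqref{3.1} carries a delta function there, and the bottom $Q_h(0)$ is nontrivial. The bottom is controlled by Lemma~\ref{urr-g-3}: its proof in fact shows that the eigenvalues of $D^2u(\cdot,0)$ are comparable to $|x|^{-1}$ in the maximal angular direction and to $|x|^{\sigma_p-1}$ in the radial direction, yielding $|x||D^2u(x,0)|\lesssim 1$ at $t=0$ and hence bounding the Pogorelov auxiliary function at the initial time. The singularity at the origin is handled by running the maximum-principle argument of Lemma~\ref{CH-IUMJ} on $Q_h\setminus\overline{B_\eps}$ and letting $\eps\to 0$; the two-sided bound on $u$ ensures the Pogorelov auxiliary quantity remains bounded on $\partial B_\eps$ as $\eps\to 0$. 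Together these yield $(-w)|D^2u|\lesssim 1$ on $Q_h\setminus\{0\}$. Finally, the choice $h=2Cr_0$ gives $|w(x_0,t_0)|=h-u(x_0,t_0)\in[Cr_0,(2C-\rho_0)r_0]$, i.e.\ $|w(x_0,t_0)|\approx r_0$, whence
\begin{equation*}
r_0|D^2u(x_0,t_0)|\lesssim|w(x_0,t_0)||D^2u(x_0,t_0)|\lesssim 1,
\end{equation*}
which is \eqref{po-es1}.
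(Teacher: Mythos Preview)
Your overall framework is right --- apply Lemma~\ref{CH-IUMJ} on a parabolic section of $u-\ell$ and read off $|x||D^2u|\lesssim 1$ from $(-w)|D^2w|\lesssim 1$ --- but the choice $\ell\equiv 0$ (equivalently $w=u-h$ with $h$ a constant) does not work, and the excision step you propose to fix it fails.

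With $\ell\equiv 0$, the origin is an \emph{interior} point of $Q_h$, and there $-w=h-u\to h>0$ does not vanish. On $\partial B_\eps$ the Pogorelov quantity is $(-w)|D^2u|\approx h\cdot|D^2u|$; bounding this would require a uniform bound on $|D^2u|$ itself, which is both stronger than the claim and in fact false (we expect $|D^2u|\approx |x|^{-1}$). So the assertion that ``the two-sided bound on $u$ ensures the Pogorelov auxiliary quantity remains bounded on $\partial B_\eps$'' is not justified: the two-sided bound $c|x|\le u\le C|x|$ only says $-w\in[h-C\eps,h-c\eps]\approx h$ on $\partial B_\eps$, and gives no information on $|D^2u|$ there.

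The paper's approach (following \cite{HTW,S05}) is to take $\ell(x)=a\cdot x$ a \emph{tilted} linear function through the origin, chosen so that $\ell$ slightly exceeds the tangent cone $\phi$ in one direction. Concretely, after normalizing $\phi(e_1,t_0)=1$, set $\ell(x)=(1+\eps)x_1$. Then the section $\{u<\ell\}$ lies in $\{x_1>0\}$ and has the origin on its \emph{boundary}, and Lemma~\ref{lemc1} guarantees uniformly in $t$ that $|u-\ell|\approx |x|$ throughout the section. Now the Pogorelov auxiliary function $(-w)|D^2u|$ genuinely vanishes at the origin along with $-w$, and the bound $(-w)|D^2u|\lesssim 1$ translates directly into $|x||D^2u|\lesssim 1$. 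Your handling of the bottom $\{t=0\}$ via the initial estimates of Lemma~\ref{urr-g-3} is correct and is also what is needed in the paper's argument.
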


By a rescaling argument, from \eqref{po-es1} we then have

\begin{corollary}\label{C3.2}
 There holds
\begin{equation}\label{p-sup}
|x| |D^2\phi(x,t)| \lesssim 1 \ \  \text{for}  \ \   (x,t)\in \mathbb R^n\backslash \{0\}\times[0,T].
\end{equation}
\end{corollary}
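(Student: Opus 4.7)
\textbf{Proof proposal for Corollary \ref{C3.2}.} The key observation is that $\phi(\cdot,t)$ is positively homogeneous of degree one in $x$, so formally $D^2\phi(\cdot,t)$ is homogeneous of degree $-1$; therefore the product $|x|\,|D^2\phi(x,t)|$ is scale invariant, and it suffices to establish the bound on, say, the unit sphere $|y|=1$, interpreted in the almost-everywhere sense for the convex function $\phi(\cdot,t)$. The plan is to obtain this bound by realizing $\phi$ as a blow-down limit of $u$ and transferring Corollary \ref{C3.1} across the limit.

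Fix $t\in[0,T]$ and consider the rescaled functions
\begin{equation*}
u_\lambda(y,t):=\frac{u(\lambda y,t)}{\lambda},\qquad \lambda>0.
\end{equation*}
Each $u_\lambda(\cdot,t)$ is convex, and by Lemma \ref{lemc1} combined with the homogeneity of $\phi$,
\begin{equation*}
|u_\lambda(y,t)-\phi(y,t)|=\Big|\tfrac{u(\lambda y,t)-\phi(\lambda y,t)}{\lambda}\Big|\le |y|\,\omega(\lambda|y|)\to 0
\end{equation*}
locally uniformly in $y\in\mathbb R^n$ as $\lambda\to 0^+$. On the other hand, since $D^2u_\lambda(y,t)=\lambda\,D^2u(\lambda y,t)$, Corollary \ref{C3.1} applied at the point $(\lambda y,t)$ yields the scale-invariant bound
\begin{equation*}
|y|\,|D^2u_\lambda(y,t)|=|\lambda y|\,|D^2u(\lambda y,t)|\lesssim 1,
\end{equation*}
uniformly in $\lambda$ small, for $y$ in any compact subset of $\mathbb R^n\setminus\{0\}$.

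We now pass to the limit. Since the $u_\lambda(\cdot,t)$ are convex, converge locally uniformly to $\phi(\cdot,t)$ on $\mathbb R^n\setminus\{0\}$, and enjoy a uniform $C^{1,1}_{loc}$ bound there, the Arzelà--Ascoli theorem applied to $Du_\lambda$ plus weak-$*$ compactness of $D^2u_\lambda$ in $L^\infty_{loc}(\mathbb R^n\setminus\{0\})$ give, along a subsequence, $u_\lambda\to\phi$ in $C^{1,\alpha}_{loc}$ and $D^2u_\lambda\rightharpoonup D^2\phi$ weakly-$*$. Consequently $\phi(\cdot,t)\in C^{1,1}_{loc}(\mathbb R^n\setminus\{0\})$ and inherits $|y|\,|D^2\phi(y,t)|\lesssim 1$ almost everywhere; combined with the scale invariance noted above, this gives $|x|\,|D^2\phi(x,t)|\lesssim 1$ on $\mathbb R^n\setminus\{0\}$, with constants uniform in $t\in[0,T]$ (since those in Corollary \ref{C3.1} and the modulus $\omega$ in Lemma \ref{lemc1} are). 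The only delicate point is to identify the weak-$*$ limit of $D^2u_\lambda$ with the distributional/Aleksandrov Hessian of the convex cone $\phi$; this is standard, since for convex functions these notions coincide a.e.\ and the $C^{1,\alpha}$ convergence of $Du_\lambda$ fixes the first derivatives of the limit.
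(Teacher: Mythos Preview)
Your proof is correct and is precisely the rescaling argument the paper has in mind: the paper simply writes ``By a rescaling argument, from \eqref{po-es1} we then have'' without further detail, and you have carefully spelled out that argument (blow-down $u_\lambda\to\phi$, scale-invariance of $|x||D^2u|$, and passage to the limit via the uniform $C^{1,1}_{loc}$ bound).
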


To establish the a priori estimates in Theorem \ref{thmA},
we assume that the interface  $\Gamma_t$ is smooth and uniformly convex for $t\in (0, T]$.
By Corollary \ref{ut-sup} and Lemma \ref{ut-sub}, we have
\begin{equation}\label{ut-b-3}
-u_{t}(x,t)\approx |x|  \ \  \text{for}  \ \   (x,t)\in B_1(0)\backslash \{0\}\times[0,T].
\end{equation}
By estimate \eqref{ut-b-3} and equation \eqref{3.1}, we have
\beq\label{d2u1}
\det D^2 u \approx |x|^{-1/p}\ \ \text{near the origin.}
\eeq
Near the origin, $u$ is asymptotic to the convex cone $\phi$, which suggests that
\beq\label{d2u2}
C_1|x|^{-1}\le u_{\xi\xi}(x, t)\le C_2|x|^{-1}
\eeq
for any unit vector $\xi\perp \vec{ox}$,
where the positive constants $C_1, C_2$ depend only on $\mathcal M_0, n, p, T$.
The second inequality in \eqref{d2u2} follows from \eqref{po-es1}.
The first inequality shall be proved below.

\begin{lemma}\label{urr-sup}
There holds the estimate
\begin{equation}\label{urr}
u_{rr}(x,t) \lesssim |x|^{n-1-1/p}  \ \  \text{for}  \ \   (x,t)\in B_1(0)\backslash \{0\}\times[0,T].
\end{equation}
\end{lemma}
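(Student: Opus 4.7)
The aim is to establish the pointwise bound $u_{rr}(x,t) \lesssim |x|^{\sigma_p - 1}$ by a parabolic Bernstein-type maximum principle argument applied to the auxiliary function introduced just after \eqref{wx}, namely
\[
G(x,t) = \frac{x_i x_j\, u_{x_i x_j}(x,t)}{(-u_t(x,t))^{\beta - 4(p\beta+1)u_t(x,t)}},
\]
for a parameter $\beta \in (1, \sigma_p+1)$. Since $-u_t \approx |x|$ by \eqref{ut-b-3}, a uniform upper bound $G \le M$ on a suitable parabolic cylinder translates, after rewriting, into the desired decay rate for $x_i x_j u_{ij} = r^2 u_{rr}$.

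\textbf{Step 1 (domain and boundary data).} Choose a small level $\tau>0$ and consider the bowl-shaped set $Q_\tau = \{(x,t) : u(x,t) < \tau,\ 0 \le t \le T\}$, whose parabolic boundary consists of the lateral piece $\{u=\tau\}$ and the initial slice $Q_\tau \cap \{t=0\}$. On $\{u=\tau\}$, the estimate \eqref{rho-0} keeps $|x|$ uniformly away from zero, so Proposition \ref{DS2009} and \eqref{ut-b-3} furnish smooth uniformly convex bounds for $u$ and give $-u_t \gtrsim 1$ there; hence $G$ is bounded by a constant depending on $\tau$. On $\{t=0\}$, Lemma \ref{urr-g-3} yields $r^2 u_{rr}(x,0) \lesssim r^{\sigma_p+1}$, Corollary \ref{C3.1} controls the tangential components of $D^2 u$, and $-u_t(x,0) \approx r$; altogether we obtain $G(x,0) \lesssim r^{\sigma_p+1-\beta}$, which is bounded precisely because $\beta < \sigma_p+1$.

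\textbf{Step 2 (interior maximum analysis).} Suppose $G$ has an interior maximum at some $(x_0,t_0) \in Q_\tau \setminus \partial_p Q_\tau$; then $x_0 \ne 0$ and at that point $(\log G)_{x_k} = 0$, $D^2_x \log G \le 0$, and $(\log G)_t \ge 0$. The plan is to apply the linearized parabolic operator $\mathcal L = p\, u^{ij}\partial_{ij} + (-u_t)^{-1}\partial_t$, which arises from the log form of equation \eqref{3.1}, to $\log G$ and to substitute the critical-point identities to eliminate first derivatives of $u_{ij}$ in favor of those of $-u_t$. The exponent $\beta - 4(p\beta+1)u_t$ is engineered so that the cross terms mixing $u_{ijk}$ with $u_{tk}$, together with the $u_{tt}$ contribution produced by $\partial_t \log G$, combine into quantities of definite sign; the constraint $\beta > 1$ is needed so that the leading Monge--Amp\`ere trace $u^{ij} x_i x_j$ dominates an extra $1/(-u_t)$ factor, while $\beta < \sigma_p+1$ was already required by Step~1. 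After this cancellation one is left with a strict inequality incompatible with the maximum property, a contradiction.

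\textbf{Step 3 (conclusion).} Hence $G \le M$ on $Q_\tau$, with $M$ depending only on $\mathcal M_0,\, n,\, p,\, T$. Combining with $-u_t \approx |x|$ gives, near the origin and uniformly for $t \in [0,T]$,
\[
r^2\, u_{rr}(x,t) \;\lesssim\; (-u_t)^{\beta - 4(p\beta+1)u_t} \;\lesssim\; r^\beta,
\]
since the extra factor $(-u_t)^{-4(p\beta+1)u_t} = \exp\!\big(4(p\beta+1)(-u_t)\log(-u_t)\big)$ is uniformly bounded. Letting $\beta \nearrow \sigma_p+1$, with the constants in Steps 1--2 remaining controlled in that limit, yields the sharp rate $u_{rr} \lesssim r^{\sigma_p-1} = |x|^{n-1-1/p}$.

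\textbf{Main obstacle.} The delicate part is the Bernstein computation in Step 2: verifying that the unusual, $u_t$-dependent exponent $\beta - 4(p\beta+1)u_t$ produces exactly the algebraic cancellation required. Because the exponent itself depends on $u_t$, differentiating $\log G$ produces additional $\log(-u_t)$ factors, and these must be balanced against the $u^{ij} u_{ti} u_{tj}/u_t^2$ type terms coming from the log-differentiation of \eqref{3.1}. Tracking the correct signs and ensuring that the choice $4(p\beta+1)$ of the coupling coefficient is the precise value that closes the argument over the full range $\beta \in (1, \sigma_p+1)$ is the core technical difficulty.
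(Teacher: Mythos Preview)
Your plan is correct and matches the paper's approach almost exactly: same auxiliary function $G$, same maximum-principle strategy on a small parabolic neighborhood of the origin, same boundary control via Lemma~\ref{urr-g-3} and Proposition~\ref{DS2009}, and the same limiting step $\beta\nearrow\sigma_p+1$.

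One correction to your account of the mechanism in Step~2: the constraint $\beta<\sigma_p+1$ is not merely a boundary requirement carried over from Step~1---it is the decisive condition in the interior computation. After diagonalizing $D^2u$ at the maximum point (with $\bar x=(r,0,\dots,0)$) and introducing $\mathcal K=\sum_{a\ge 2}u^{aa}u_{1aa}$, the combination $\frac{1}{u_t}(\log G)_t+pu^{aa}(\log G)_{aa}$ reduces to a quadratic inequality in $\mathcal K$ whose discriminant equals
\[
\frac{2np^3\bigl[\beta-(n+1-1/p)\bigr]}{(n-1)(1+p\beta)},
\]
which is negative precisely when $\beta<\sigma_p+1$. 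This negativity makes the leading quadratic form in $\mathcal K$ strictly positive, and the residual $u_t\log(-u_t)/r^2$ term (produced by the $u_t$-dependent exponent you highlighted) then forces the contradiction for $\delta_0$ small, uniformly in $\beta$. The lower bound $\beta>1$ plays no essential role beyond ensuring the interval is nonempty under the hypothesis $p>1/n$.
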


\begin{proof}
By \eqref{ut-b-3},  we see that \eqref{urr} is equivalent to
\begin{equation}\label {urrt}
r^2 u_{rr}  \lesssim (-u_t)^{n+1-1/p}.
\end{equation}

Introduce the auxiliary function
$$G(x,t)=:\frac{x_ix_j u_{ij}}{(-u_t)^{\beta-4(p\beta+1)u_t}}
     \ \   \text{in} \ \   \tilde\Sigma(\delta_0)=:B_{\delta_0}(0)\times (0,T] ,$$
where the constant $\beta \in (1, n+1- \frac 1p)$, and $\delta_0>0$ is a small positive constant.
Assume that the maximum
$\max_{\tilde\Sigma(\delta_0)} G(x,t)$ is attained at  $(\bar x,\bar t)$.
Since $\beta<n+1- \frac 1p$, by Lemma \ref{urr-g-3}, we see that $\bar x\ne 0$.
By Proposition \ref{DS2009},
$G$ is under control on the parabolic boundary $\p_p\tilde\Sigma(\delta_0)$.

Therefore we may assume that  $(\bar x,\bar t)$ is an interior point of $\tilde\Sigma(\delta_0)$ and $\bar x\ne 0$.
One easily verifies that $r^2 u_{rr}$ is invariant under linear transformations of coordinates.
Indeed,  let $\tilde x=Ax$ and $\tilde u(\tilde x)=u(A^{-1}\tilde x)=u(x)$,
where $A=(a_{ij})_{i,j=1}^n$, $A^{-1}=(a^{ij})_{i,j=1}^n$.
Then
\begin{equation*}
\begin{split}
\tilde x_i\tilde x_j \tilde u_{ij}=a_{ik}x_ka_{jl}x_l u_{st}a^{s i}a^{tj}=x_kx_l u_{kl}.
\end{split}
\end{equation*}
Hence we may assume that $\bar x=(r,0,\cdots,0)$ with $0<r<\delta_0$.

We then make a linear transform of the coordinates, which leaves the origin and the point  $\bar x=(r,0,\cdots,0)$  unchanged,
such that the matrix $\{u_{ij}(\bar x,\bar t)\}$ is diagonal.
A direct calculation yields that, at $(\bar x,\bar t)$,
\begin{equation}\label{3-5-1}
\begin{split}
 \ \  \,\,\,0=(\log  G)_a=\frac{2u_{1a}+ru_{11a}}{ru_{11}}-\big(\beta-4(p\beta+1)u_t -4(p\beta+1)u_t\log  (-u_t)\big)\frac{u_{ta}}{u_t},
\end{split}
\end{equation}
\begin{equation}\label{3-5-2}
\begin{split}
0\ge (\log  G)_{aa}&=\frac{2u_{aa}+4ru_{1aa}+r^2 u_{11aa}}{r^2 u_{11}}-\frac{(2u_{1a}+ru_{11a})^2}{r^2 u_{11}^2}\\
& \quad -\big(\beta-4(p\beta+1)u_t-4(p\beta+1)u_t\log (-u_t)\big) \Big(\frac{u_{taa}}{u_t}-\frac{u_{ta}^2}{u_t^2}\Big)\\
& \quad + 4(p\beta+1)\big(2u_t+u_t\log  (-u_t)\big)\frac{u_{ta}^2}{u_t^2},
\end{split}
\end{equation}
and
\begin{equation}\label{3-5-3}
\,\,\, 0\le (\log  G)_t=\frac{u_{11t}}{u_{11}}-\big(\beta-4(p\beta+1)u_t- 4(p\beta+1)u_t\log  (-u_t)\big)\frac{u_{tt}}{u_t}. \ \   \ \   \ \
\end{equation}
Differentiating equation \eqref{3.1} gives
\begin{equation}\label{3-5-4}
\begin{split}
\frac{u_{tt}}{u_t}+pu^{aa}u_{taa}=0,
\end{split}
\end{equation}
\begin{equation}\label{3-5-5}
\begin{split}
\frac{u_{ti}}{u_t}+pu^{aa}u_{aai}=(\log  f)_i,
\end{split}
\end{equation}
and
\begin{equation}\label{3-5-44}
\begin{split}
\frac{u_{tii}}{u_t}+pu^{aa}u_{aaii}= \frac{u_{ti}^2}{u_t^2}+pu^{ak}u^{bl}u_{abi}u_{kli}+(\log  f)_{ii},
\end{split}
\end{equation}
where $\log  f= -\frac{(n+2)p-1}{2}\log (1+|x|^2)$.
Hence
\begin{equation}
\begin{split}\label{3-5-55}
0&\ge  \frac{(\log  G)_t}{u_t}+pu^{aa}(\log  G)_{aa}\\
&=\frac{1}{u_{11}}\Big(\frac{u_{11t}}{u_t}+pu^{aa}u_{11aa}\Big)-\frac{\hat \beta}{u_t}\Big(\frac{u_{tt}}{u_t}+pu^{aa}u_{taa}\Big)+\frac{2p(n-2)}{r^2 u_{11}}\\
& \quad -\frac{pu^{aa}u_{11a}^2}{u_{11}^2}+\sum_{a\ge 2}\frac{4pu^{aa}u_{1aa}}{ru_{11}}+ p\big(\beta +4(p\beta+1)u_t\big)u^{aa}\frac{u_{ta}^2}{u_t^2},
\end{split}
\end{equation}
where
\begin{equation*}
\hat \beta=\beta-4(p\beta+1)u_t-4(p\beta+1)u_t\log (-u_t).
\end{equation*}
Now we estimate \eqref{3-5-55} term by term.
By \eqref{3-5-44}, we have
\begin{equation}\label{3-5-6}
\begin{split}
& \quad \frac{1}{u_{11}}\Big(\frac{u_{11t}}{u_t}+pu^{aa}u_{11aa}\Big) -p\frac{u^{aa}u_{11a}^2}{u_{11}^2}\\
&=\frac{u_{t1}^2}{u_{11}u_t^2}+pu^{aa}u^{bb}u^{11}u_{ab1}^2+ u^{11}(\log  f)_{11}-p\frac{u^{aa}u_{11a}^2}{u_{11}^2}\\
&\ge\frac{u_{t1}^2}{u_{11}u_t^2}+pu^{11}\sum_{a\ge 2}(u^{aa})^2u_{aa1}^2+u^{11}(\log  f)_{11}\\
&\ge\frac{u_{t1}^2}{u_{11}u_t^2}+p\frac{(\sum_{a\ge 2}u^{aa}u_{aa1})^2}{(n-1)u_{11}}+u^{11}(\log  f)_{11}.
\end{split}
\end{equation}
Combining \eqref{3-5-1} and \eqref{3-5-5} yields
\begin{equation}\label{3-5-7}
(p\hat \beta+1)\frac{u_{t1}}{u_t}=\frac {2p}r -p\sum_{a\ge 2}u^{aa}u_{1aa}+(\log  f)_1.
\end{equation}
Denote $\mathcal K=\sum_{a\ge 2}u^{aa}u_{1aa}$.  Inserting \eqref{3-5-6}, \eqref{3-5-7} into \eqref{3-5-55}
and multiplying with $u_{11}$, we obtain
\begin{equation}\label{3-5-8}
\begin{split}
0&\ge \frac{1+p(\beta+4(p\beta+1)u_t)}{(p\hat \beta+1)^2}\Big(\frac {2p}r -p\mathcal K+(\log  f)_1\Big)^2\\
& \ \ \  +\frac{2p(n-2)}{r^2}+ \frac{4p}{r}\mathcal K+\frac{p\, \mathcal K^2}{n-1}+(\log  f)_{11}.
\end{split}
\end{equation}
By \eqref{ut-b-3}, $|u_t|\lesssim \delta_0$ is small, there holds
\begin{equation}\label{3-5-9}
\begin{split}
& \quad  \frac{1+p(\beta+4(p\beta+1)u_t)}{(p\hat \beta+1)^2}\\
&= \frac{1}{p\beta+1}\frac{1+4pu_t}{(1-4pu_t-4pu_t\log (-u_t))^2}\\
&=\frac{1}{p\beta+1}\Big(1+12pu_t+8pu_t\log (-u_t)+ o\big(u_t\log (-u_t)\big)\Big)\\
&\ge\frac{1+2pu_t\log (-u_t)}{p\beta+1}.
\end{split}
\end{equation}
Hence,
\begin{equation}\label{3-5-10}
\begin{split}
& \quad  \frac{1+p(\beta+4(p\beta+1)u_t)}{(p\hat \beta+1)^2}\Big(\frac {2p}r -p\mathcal K+(\log  f)_1\Big)^2\\
&\ge\frac{1+2pu_t\log (-u_t)}{p\beta+1}\Big[p^2\Big(\frac 2r -\mathcal K\Big)^2
                       +2p\Big(\frac 2r -\mathcal K\Big)(\log  f)_1+\big((\log  f)_1\big)^2\Big]\\
&\ge \frac{p^2(1+pu_t\log (-u_t))}{p\beta+1}\Big(\frac 2r -\mathcal K\Big)^2
                      + \frac{1+2pu_t\log (-u_t)}{p\beta+1} \Big( 1 - \frac{1+2pu_t\log (-u_t)}{pu_t\log (-u_t)}\Big)\big((\log  f)_1\big)^2\\
&= \frac{p^2(1+pu_t\log (-u_t))}{p\beta+1}\Big(\frac 2r -\mathcal K\Big)^2
                     - \frac{(1+2pu_t\log (-u_t)) (1+pu_t\log (-u_t)) }{p(p\beta+1)u_t\log (-u_t)} \big((\log  f)_1\big)^2 \\
&\ge  \frac{p^2(1+pu_t\log (-u_t))}{p\beta+1}\Big(\frac 2r -\mathcal K\Big)^2- \frac{ C_{n,p}}{u_t\log (-u_t)}
\end{split}
\end{equation}
where $C_{n,p}$ is a constant depending only on $n$ and $p$.
Inserting \eqref{3-5-10} into \eqref{3-5-8} yields that
\begin{equation}\label{3-5-11}
\begin{split}
0&\ge\frac{4p(p(\beta-1)+1-p^2u_t\log (-u_t))}{p\beta+1}\frac{\mathcal K}{r}
                 +\Big(\frac{p^2(1+pu_t\log (-u_t))}{p\beta+1}+\frac{p}{n-1}\Big) \mathcal K^2\\
& \quad  + \Big(\frac{4p^2(1+pu_t\log (-u_t))}{p\beta+1}+2p(n-2)\Big) \frac 1{r^2} -\frac{ C_{n,p}}{u_t\log (-u_t)}+(\log  f)_{11} \\
&\ge \Big(\frac{p^2}{p\beta+1}+\frac{p}{n-1}\Big) \mathcal K^2
               -\frac{4p[p(\beta-1)+1]}{p\beta+1}\frac{|\mathcal K|}{r} + \Big(\frac{4p^2}{p\beta+1}+2p(n-2)\Big)\frac{1}{r^2} \\
& \quad  +\frac{4p^3u_t\log (-u_t)}{p\beta+1}\frac{1}{r^2}- \frac{ C_{n,p}}{u_t\log (-u_t)} - { C_{n,p} },
\end{split}
\end{equation}
as $\big|p(\beta-1)+1-p^2u_t\log (-u_t)\big| < p(\beta-1)+1$ for $|u_t|$ small.
Notice that
\begin{equation*}
\begin{split}
&\frac{4p^2(p\beta+1-p)^2}{(p\beta+1)^2}-p^2\Big(\frac{p}{p\beta+1}+\frac{1}{n-1}\Big) \Big(\frac{4p}{p\beta+1}+2(n-2)\Big)\\
=&\frac{2np^3[\beta-(n+1-1/p)]}{(n-1)(1+p\beta)}<0.
\end{split}
\end{equation*}
Since $|u_t|$ is small for $\delta_0$ small, therefore, \eqref{3-5-11} reduces to
\begin{equation*}
0\ge \frac{4u_t\log (-u_t)}{p\beta+1}\frac{1}{r^2} - \frac{ C_{n,p}}{u_t\log (-u_t)} - { C_{n,p} }>0
\end{equation*}
which is impossible.

The above argument implies that the auxiliary function $G$ cannot attain its maximum at an interior point in
$\tilde\Sigma(\delta_0)$.
Hence $\max G$ must be attained on the parabolic boundary of $\tilde\Sigma(\delta_0)$.
Sending $\beta\to n-1-\frac 1p$, we obtain estimate \eqref{urr}.
\end{proof}

\begin{corollary}\label{C3.3}
Let $\lambda_1(x,t)\le \cdots \le\lambda_n(x,t)$ be the eigenvalues of $D^2 u$
at the point $(x,t)\in B_1(0)\backslash \{0\}\times[0,T]$.
Then,
\begin{equation} \label{eiges}
{\begin{split}
 &\lambda_1(x,t)\approx |x|^{n-1-1/p}, \\
 &\lambda_2(x,t)\approx \cdots \approx\lambda_{n}(x,t) \approx |x|^{-1}.
 \end{split}}
\end{equation}
\end{corollary}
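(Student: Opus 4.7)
The plan is to derive the two-sided estimates on the eigenvalues from three ingredients already in hand: the upper bound $|x||D^2u|\lesssim 1$ from Corollary \ref{C3.1}, the radial estimate $u_{rr}\lesssim |x|^{n-1-1/p}$ from Lemma \ref{urr-sup}, and the two-sided control $\det D^2 u\approx |x|^{-1/p}$ obtained by feeding the finite-speed estimate \eqref{ut-b-3} into the Monge--Amp\`ere equation \eqref{3.1} (away from the origin the Dirac mass plays no role).

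First I would establish the upper bounds. Corollary \ref{C3.1} immediately gives $\lambda_n\lesssim |x|^{-1}$, and hence $\lambda_i\lesssim |x|^{-1}$ for all $i$. For the smallest eigenvalue, I use that the radial Rayleigh quotient bounds $\lambda_1$ from above: since $u_{rr}=\langle D^2u\,\hat r,\hat r\rangle$ for the unit vector $\hat r=x/|x|$, one has $\lambda_1\le u_{rr}$, and then Lemma \ref{urr-sup} yields $\lambda_1\lesssim |x|^{n-1-1/p}$.

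Next I would extract the matching lower bounds using the determinant identity $\lambda_1\lambda_2\cdots\lambda_n=\det D^2u\approx |x|^{-1/p}$. Since $\lambda_2,\ldots,\lambda_n\le C|x|^{-1}$, we get
\begin{equation*}
\lambda_1\;\gtrsim\;\frac{\det D^2u}{\lambda_2\cdots\lambda_n}\;\gtrsim\;\frac{|x|^{-1/p}}{|x|^{-(n-1)}}\;=\;|x|^{n-1-1/p},
\end{equation*}
which combined with the previous upper bound gives $\lambda_1\approx |x|^{n-1-1/p}$. Substituting back, $\lambda_2\cdots\lambda_n\approx |x|^{-1/p}/|x|^{n-1-1/p}=|x|^{-(n-1)}$; since each factor is at most $C|x|^{-1}$ and there are $n-1$ of them, this forces each $\lambda_i$ $(i\ge 2)$ to be at least $c|x|^{-1}$, so $\lambda_2\approx\cdots\approx\lambda_n\approx |x|^{-1}$.

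No serious obstacle is expected: the argument is a purely algebraic combination of the three prior estimates, with the only subtlety being that the upper bound on $\lambda_1$ comes from an inner product against a \emph{specific} direction, namely $\hat r$, which is legitimate because $u_{rr}$ is a Rayleigh quotient of $D^2u$. The use of Lemma \ref{urr-sup}, which was the genuinely hard step, is what converts the scalar radial bound into a bound on the smallest eigenvalue. All remaining estimates follow by elementary manipulation of products of eigenvalues.
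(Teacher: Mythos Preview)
Your argument is correct and is essentially the paper's proof written out in full: the paper simply cites \eqref{urr}, \eqref{d2u1} and \eqref{d2u3} and leaves the algebraic combination of the Rayleigh-quotient bound $\lambda_1\le u_{rr}$, the determinant identity, and the uniform upper bound on the top eigenvalues to the reader. Your use of Corollary~\ref{C3.1} in place of \eqref{d2u3} is in fact the cleaner reference for general $t\in[0,T]$, since \eqref{d2u3} as stated is established only at $t=0$; either way, the logical structure is identical.
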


\begin{proof} \eqref{eiges} follows from
\eqref{urr}, \eqref{d2u1} and   \eqref{d2u3}.
\end{proof}

Note that the inequality \eqref{d2u2} follows from \eqref{eiges}.
From \eqref{d2u1} we also have $u_{rr}(x,t) \gtrsim |x|^{n-1-1/p}$. Hence
\eqref{urr} can be strengthened to
\begin{equation}\label{urr2}
u_{rr}(x,t) \approx |x|^{n-1-1/p}  \ \  \text{for}  \ \   (x,t)\in B_1(0)\backslash \{0\}\times[0,T].
\end{equation}
Therefore by taking integration,
\begin{equation}\label{asyw}
w(x,t)\approx |x|^{n+1-1/p} \ \   \forall\ (x,t)\in B_1(0) \backslash \{0\}\times[0,T].
\end{equation}
By \eqref{d2u2} and a rescaling argument, we also have

\begin{corollary}\label{C3.4}
Let $\phi(\cdot, t)$ be the asymptotic cone of $u(\cdot, t)$ at $(0, t)$.
Then
\begin{equation}\label{p-sub}
|x| |D^2_\xi\phi(x,t)| \approx 1  \ \   \forall\ (x,t)\in \mathbb R^n \backslash \{0\}\times[0,T],
\end{equation}
for any unit vector $\xi\perp \vec{ox}$.
\end{corollary}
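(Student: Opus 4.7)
The upper bound $|x|\,|D^2_\xi\phi(x,t)|\lesssim 1$ is already contained in Corollary \ref{C3.2}, so only the matching lower bound requires attention. The plan is a standard zoom-in argument that exploits the scale invariance of \eqref{d2u2}.

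For $\lambda>0$ set $u_\lambda(y,t):=\lambda^{-1}u(\lambda y,t)$. Differentiating gives $(u_\lambda)_{\xi\xi}(y,t)=\lambda\,u_{\xi\xi}(\lambda y,t)$, and since $\xi\perp y\Leftrightarrow \xi\perp\lambda y$, estimate \eqref{d2u2} applied at the point $\lambda y\in B_1(0)$ produces the $\lambda$-uniform bound
\[
C_1\,|y|^{-1} \,\le\, (u_\lambda)_{\xi\xi}(y,t) \,\le\, C_2\,|y|^{-1}, \qquad \forall\,y\ne 0,\ \xi\perp y,\ |\xi|=1.
\]
By Lemma \ref{lemc1} and the definition of the tangential cone, $u_\lambda(\cdot,t)\to\phi(\cdot,t)$ locally uniformly on $\mathbb R^n$. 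Corollary \ref{C3.1} supplies the scale-invariant estimate $|y|\,|D^2 u_\lambda(y,t)|\lesssim 1$ on $\mathbb R^n\setminus\{0\}$, so convexity plus Arzela--Ascoli upgrade the convergence to $C^{1,\alpha}_{loc}(\mathbb R^n\setminus\{0\})$ for every $\alpha\in(0,1)$ and give weak-$*$ convergence of the Hessians in $L^\infty_{loc}$.

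To transfer the pointwise bound to $\phi$ I would work with second differences: for $y_0\ne 0$, $\xi\perp y_0$ and $s>0$ small,
\[
\Delta_s u_\lambda(y_0,t) := u_\lambda(y_0+s\xi,t)+u_\lambda(y_0-s\xi,t)-2u_\lambda(y_0,t) = \int_{-s}^{s}(s-|\tau|)(u_\lambda)_{\xi\xi}(y_0+\tau\xi,t)\,d\tau.
\]
Decomposing $\xi$ at the moving point $y_0+\tau\xi$ into its radial and tangential parts (the radial component is of order $\tau/|y_0|$) and using the eigenvalue bounds \eqref{eiges} of Corollary \ref{C3.3} together with \eqref{urr2} to control the radial and mixed Hessian entries, the integrand equals $C/|y_0|+o(1)$ uniformly in $\lambda$ small. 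Hence $\Delta_s u_\lambda(y_0,t)\approx s^2/|y_0|$ uniformly in $\lambda$; passing to the limit via uniform convergence, $\Delta_s\phi(y_0,t)\approx s^2/|y_0|$, and dividing by $s^2$ followed by $s\to 0$ gives $|y_0|\,\phi_{\xi\xi}(y_0,t)\approx 1$ wherever this limit exists. Since $\phi\in C^{1,1}_{loc}(\mathbb R^n\setminus\{0\})$ is convex and $1$-homogeneous, continuity of the tangential Hessian on the smooth part of the level set $\{\phi(\cdot,t)=1\}$ extends the bound to every $(y_0,\xi)$ with $\xi\perp y_0$.

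The main obstacle is the bookkeeping in the second-difference step: the pointwise bound on $(u_\lambda)_{\xi\xi}$ holds only when $\xi$ is \emph{exactly} perpendicular to the base point, whereas on $[y_0-s\xi,y_0+s\xi]$ the direction $\xi$ acquires a small radial component of order $\tau/|y_0|$. Controlling the resulting radial and mixed contributions by the refined Hessian estimates of Corollary \ref{C3.3} (and Cauchy--Schwarz) so that they remain strictly lower-order than the leading $s^2/|y_0|$ term is the only nontrivial calculation; the rest of the passage to the limit is soft.
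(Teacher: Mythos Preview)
Your proposal is correct and is precisely the ``rescaling argument'' the paper invokes in one line; you have simply written out what the paper leaves implicit. The blow-up $u_\lambda(y,t)=\lambda^{-1}u(\lambda y,t)$, the transfer of \eqref{d2u2} to $u_\lambda$, and the passage to the limit $\phi$ are exactly the intended mechanism.

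Two remarks that may simplify the write-up. First, the perpendicularity bookkeeping you flag as the ``main obstacle'' is genuine but lighter than you suggest: at $y_\tau=y_0+\tau\xi$ one has $(u_\lambda)_{rr}(y_\tau)\approx\lambda^{\sigma_p}|y_\tau|^{\sigma_p-1}\to 0$ and, by Cauchy--Schwarz on the nonnegative Hessian, the mixed radial--tangential term is $O(\lambda^{\sigma_p/2})$; hence $(u_\lambda)_{\xi\xi}(y_\tau)=(|y_0|/|y_\tau|)^{2}(u_\lambda)_{\xi^\perp\xi^\perp}(y_\tau)+o_\lambda(1)\ge C_1|y_0|^2|y_\tau|^{-3}-o_\lambda(1)$, uniformly for $|\tau|\le s\le |y_0|/2$. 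Integrating and sending $\lambda\to 0$ then $s\to 0$ gives the lower bound directly. Second, your closing appeal to ``continuity of the tangential Hessian on the smooth part'' is unnecessary here: the paper is working in the a~priori regime (stated just before \eqref{ut-b-3}) where $\Gamma_t$, and hence $\phi(\cdot,t)$ on $\mathbb{R}^n\setminus\{0\}$, is already assumed smooth, so $\phi_{\xi\xi}(y_0,t)$ exists classically everywhere and no extension step is needed.
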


\begin{corollary}\label{C3.5}
For any given $T\in (0, T^*)$, there hold
\beq \label{nondc-a}
 \lambda_{\eps, t,  i}\approx 1,\quad \forall~ t\in [0,T]
 \eeq
where $\lambda_{\eps, t,  i} \ (i=1, \cdots, n-1)$ are the principal curvatures of the level set $\{y|v(y, t)=\eps\}$,
for  $\eps\ge 0$ small.  
\end{corollary}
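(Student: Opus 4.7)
The plan is to translate the Hessian estimates for $u$ (Corollary \ref{C3.3} together with \eqref{urr2}) into estimates for the second fundamental form of the level sets of $v$ via the Legendre duality $D^2 v(y_0,t) = [D^2 u(x_0,t)]^{-1}$, where $x_0 = Dv(y_0,t)$. Fix $t \in [0,T]$ and $y_0 \in \{v(\cdot,t) = \eps\}$, and set $r_0 = |x_0|$, $\nu = x_0/r_0$. The outer unit normal to the level set at $y_0$ is $\nu$, and the principal curvatures at $y_0$ are the eigenvalues of the matrix $\bigl(v_{\tau_a\tau_b}(y_0,t)/r_0\bigr)_{a,b=1}^{n-1}$ in any orthonormal tangent frame $\{\tau_a\}\subset \nu^\perp$. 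It therefore suffices to show $v_{\tau\tau}(y_0,t) \approx r_0$ for every unit $\tau \perp \nu$.

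Let $e_1,\ldots,e_n$ be an orthonormal eigenbasis of $D^2 u(x_0,t)$ with eigenvalues $\lambda_1 \le \cdots \le \lambda_n$. By Corollary \ref{C3.3}, $\lambda_1 \approx r_0^{\sigma_p - 1}$ and $\lambda_i \approx r_0^{-1}$ for $i\ge 2$; the lower bound $v_{\tau\tau} \ge 1/\lambda_n \gtrsim r_0$ is then immediate. For the matching upper bound the key point is the angle estimate that $e_1$ is nearly parallel to $\nu$. Expanding $\nu = \sum_i c_i e_i$ and using \eqref{urr2},
$$
r_0^{\sigma_p - 1} \approx u_{rr}(x_0, t) = \sum_{i=1}^n c_i^2 \lambda_i \ge (1 - c_1^2) \min_{i\ge 2} \lambda_i \gtrsim (1-c_1^2) r_0^{-1},
$$
so that $1 - c_1^2 \lesssim r_0^{\sigma_p}$. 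For a unit vector $\tau \perp \nu$, write $\tau = \sum_i d_i e_i$; the orthogonality $\sum_i c_i d_i = 0$ combined with Cauchy--Schwarz and $c_1^2 \ge 1 - C r_0^{\sigma_p}$ forces $d_1^2 \lesssim r_0^{\sigma_p}$ when $r_0$ is small. Consequently
$$
v_{\tau\tau} = \sum_{i=1}^n \frac{d_i^2}{\lambda_i} \lesssim r_0^{\sigma_p}\cdot r_0^{1-\sigma_p} + r_0 \approx r_0,
$$
completing the upper bound; hence each principal curvature at $y_0$ is $\approx 1$.

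For $\eps > 0$ sufficiently small, the level set $\{v(\cdot,t) = \eps\}$ lies uniformly (in $t \in [0,T]$) close to $\Gamma_t$, so $r_0 = |Dv(y_0,t)|$ is uniformly small on it (using $v \in C^{1,\alpha}$ and $Dv \equiv 0$ on $\Gamma_t$ from Proposition \ref{DS2009}), and the preceding estimates apply. The case $\eps = 0$ follows either by passing to the limit, or directly from Corollary \ref{C3.4}: since $\phi(\cdot,t)$ is the support function of the flat region whose boundary is $\Gamma_t$, the principal radii of $\Gamma_t$ at a point with outer normal $\theta \in \mathbb S^{n-1}$ are the eigenvalues of the spherical Hessian of $\phi(\cdot,t)|_{\mathbb S^{n-1}}$ restricted to $\theta^\perp$, which are $\approx 1$ by \eqref{p-sub}. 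The only nontrivial step is the angle estimate between $\nu$ and $e_1$, which is itself a short consequence of \eqref{urr2} and the tangential eigenvalue bounds in Corollary \ref{C3.3}.
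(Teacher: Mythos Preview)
Your proof is correct and is essentially the same approach as the paper's: both hinge on the eigenvalue estimates of Corollary~\ref{C3.3}, the radial bound \eqref{urr2}, Legendre duality $D^2v=(D^2u)^{-1}$, and the angle estimate showing the small--eigenvalue direction of $D^2u$ is nearly radial. The paper's write-up is more terse---it first records $v\approx r^{1+\sigma_p}$ and $|Dg|\approx 1$, then refers back to the argument of Lemma~\ref{urr-g-3} (whose angle computation is precisely the one you carry out on the $u$-side)---whereas you spell out the tangential Hessian bound for $v$ directly without passing through $g$; but the substance is identical.
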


\begin{proof}
By \eqref{urr2}, we have
\begin{equation*}
\begin{split}
v(y,t)=x\cdot D u-u=ru_r-u=\int_0^r \int_{\lambda}^r u_{\rho\rho}d\rho d\lambda\approx  r^{1+\sigma_p} .
\end{split}
\end{equation*}
 Hence
\begin{equation*}
|Dg(y,t)|\approx \frac{|D_y v|}{v^{1/(1+\sigma_p)}}\approx 1
\end{equation*}
and $g\approx r^{\sigma_p}$. 
We can then adapt the proof of Lemma \ref{urr-g-3} to show that $\lambda_{\eps, t,  i}\approx 1$.
\end{proof}

We now express equation \eqref{3.1} in the spherical coordinates $(\theta,r)$,
where $r=|x|$ and $\theta=(\theta_1, \cdots, \theta_{n-1})$ is  an orthonormal frame on $\mathbb S^{n-1}$.

\begin{lemma}\label{L2.4}
In the spherical coordinate $(\theta,r)$, we have
\begin{equation} \label{upb}
\begin{split}
    |\partial_r^k w(\theta,r,t)| & \lesssim r^{n-k+1-1/p}, \ \   k=0,1,2,\\
    |\partial_{\theta}^2 w(\theta,r,t) |  &\lesssim r,\\
   |\partial_{r\theta}w(\theta,r,t)|  & \lesssim r^{\frac {n-1/p}2},
\end{split}
\end{equation}
for any point $(\theta,r,t)\in \mathbb S^{n-1}\times(0,1]\times[0,T]$.
\end{lemma}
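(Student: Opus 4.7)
The five bounds in \eqref{upb} will be obtained one at a time by exploiting the estimates already in hand: the growth $w \approx r^{n+1-1/p}$ from \eqref{asyw}, the sharp radial second derivative $u_{rr}\approx r^{n-1-1/p}$ from \eqref{urr2}, the global $C^{1,1}$ bound $|D^2u|,\,|D^2\phi|\lesssim r^{-1}$ from Corollaries \ref{C3.1}--\ref{C3.2}, and the tangential non-degeneracy $u_{\xi\xi}\approx r^{-1}$ (for $\xi\perp \vec{ox}$) from \eqref{d2u2}. All of these are uniform in $t\in[0,T]$, so the resulting estimates are uniform as well. I write $x=r\omega(\theta)$ and note throughout that $\partial_\theta\omega\perp\omega$ with $|\partial_\theta\omega|\approx 1$.

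\textbf{Step 1 (bounds on $\partial_r^k w$, $k=0,1,2$).} Since $\phi(r\omega,t)=r\phi(\omega,1,t)$ is linear in $r$ along each ray, one has $\partial_r^2\phi\equiv 0$, whence $\partial_r^2 w=u_{rr}$ and the bound for $k=2$ is exactly \eqref{urr2}. By convexity of $u$ and $u(0,t)=0$, the function $r\mapsto\partial_r u(r\omega,t)$ is non-decreasing with $\partial_r u\to \phi(\omega,1,t)=\partial_r\phi$ as $r\to 0^+$, so $\partial_r w\to 0$ at the origin. Because $p>\tfrac1n$, the exponent $n-1-\tfrac1p>-1$ is integrable, and integrating $\partial_r^2 w$ from $0$ to $r$ gives $|\partial_r w|\lesssim r^{n-1/p}$. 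The case $k=0$ is a restatement of \eqref{asyw}, or follows by a further integration.

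\textbf{Step 2 (bound on $\partial_\theta^2 w$).} The chain rule gives
\[
\partial_\theta^2 w \;=\; r\,\partial_\theta^2\omega\cdot\nabla w \;+\; r^2\,(\partial_\theta\omega)^{T}\,D^2w\,(\partial_\theta\omega).
\]
The first term is bounded by $r|\nabla w|\lesssim r$, since $u$ (being convex with $|x||D^2u|\lesssim 1$) and $\phi$ (being a convex $1$-homogeneous function) are both locally Lipschitz near $0$. The second term is bounded by $r^2\cdot|D^2w|\lesssim r^2\cdot r^{-1}=r$ using Corollaries \ref{C3.1}--\ref{C3.2}. Therefore $|\partial_\theta^2 w|\lesssim r$.

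\textbf{Step 3 (bound on $\partial_{r\theta}w$, the key step).} Differentiating $\partial_r u=\omega\cdot\nabla u$ in $\theta$ and using that $\nabla\phi$ is $0$-homogeneous so that $\partial_\theta\phi_0=\partial_\theta\omega\cdot\nabla\phi$, I obtain
\[
\partial_{r\theta}w \;=\; \partial_\theta\omega\cdot\nabla w \;+\; r\,\omega^{T}D^2u\,\partial_\theta\omega.
\]
For the first term I use the interpolation inequality for non-negative $C^{1,1}$ functions: $w\ge 0$ globally (since $u$ is convex and $\phi$ is its tangent cone at the origin), so for any unit tangential vector $\xi$,
\[
|\partial_\xi w(x,t)|^2 \;\le\; 2\,\|w_{\xi\xi}\|_{L^\infty}\,w(x,t) \;\lesssim\; r^{-1}\cdot r^{n+1-1/p}\;=\;r^{n-1/p}.
\]
The interval of size $\approx r^{(n+2-1/p)/2}$ needed for this interpolation is $\ll r$ precisely because $p>\tfrac1n$, and on such an interval $|w_{\xi\xi}|\lesssim r^{-1}$ still holds. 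Hence the first term is $\lesssim r^{(n-1/p)/2}$. For the second term I exploit the convexity of $u$: since $D^2u$ is positive semidefinite, the Cauchy--Schwarz inequality for its bilinear form yields
\[
\bigl|\omega^{T}D^2u\,\partial_\theta\omega\bigr|^{2} \;\le\; (\omega^{T}D^2u\,\omega)\bigl((\partial_\theta\omega)^{T}D^2u\,(\partial_\theta\omega)\bigr) \;\lesssim\; u_{rr}\cdot u_{\xi\xi}\;\lesssim\; r^{n-1-1/p}\cdot r^{-1},
\]
so $r\,|\omega^{T}D^2u\,\partial_\theta\omega|\lesssim r^{(n-1/p)/2}$. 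Combining, $|\partial_{r\theta}w|\lesssim r^{(n-1/p)/2}$.

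\textbf{Main obstacle.} The delicate estimate is $\partial_{r\theta}w$: the naive bound $r\cdot\|D^2w\|\lesssim 1$ is far too weak, and one must pair the anisotropic estimates $u_{rr}\approx r^{n-1-1/p}$ and $u_{\xi\xi}\approx r^{-1}$ via Cauchy--Schwarz for the positive semidefinite form $D^2u$. This works only because we reason on the dual $u$, where convexity is available, rather than on $v$ (where the lack of concavity of \eqref{v} obstructs Pogorelov-type arguments, as emphasized in the introduction). The interpolation bound on the tangential first derivative of $w$ likewise relies crucially on $w\ge 0$ and on the condition $p>\tfrac1n$.
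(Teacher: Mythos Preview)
Your argument is correct and takes a genuinely different, more elementary route than the paper's proof.

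The paper proceeds by a localization–rescaling–Pogorelov scheme: for each direction it introduces the sublevel set $Q=\{u<(1+\varepsilon)x_1\}$, shows (using \eqref{asyw} and Corollary~\ref{C3.4}) that after the anisotropic dilation $T_{t_0}$ the time-slices have a good shape, verifies a uniform lower bound on $|\tilde u|$ via Alexandrov's maximum principle, and then applies the parabolic Pogorelov estimate (Lemma~\ref{CH-IUMJ}) to the rescaled function $\tilde u$; the three inequalities in \eqref{upb} drop out by undoing the scaling. Your proof bypasses this entire construction: for the radial bounds you simply observe $\partial_r^2\phi=0$ and integrate \eqref{urr2}; for the tangential second derivative you use the chain rule with Corollaries~\ref{C3.1}--\ref{C3.2}; and for the mixed derivative you combine the Cauchy--Schwarz inequality for the positive semidefinite form $D^2u$ (pairing $u_{rr}\lesssim r^{\sigma_p-1}$ with $u_{\xi\xi}\lesssim r^{-1}$) with the Glaeser-type interpolation $|\partial_\xi w|^2\le 2\|w_{\xi\xi}\|_\infty\,w$ for the nonnegative function $w$. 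Both approaches rely on the same prior inputs \eqref{urr2}, \eqref{asyw}, \eqref{po-es1}, \eqref{p-sup}; yours extracts the conclusions directly from them, while the paper's re-derives a uniform $C^2$ bound in the scaled picture. Your route is shorter and makes the role of the hypothesis $p>\tfrac1n$ (integrability of $r^{\sigma_p-1}$ and smallness of the interpolation step $r^{(\sigma_p+2)/2}\ll r$) especially transparent; the paper's approach, on the other hand, packages all three bounds into a single rescaled Pogorelov estimate and is closer in spirit to the analogous arguments in \cite{HTW,S05}.
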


\begin{proof}
From Proposition \ref{DH1999} and Lemma \ref{urr-g-3}, it is enough to prove it for $t\in [T_0,T]$,
for a small $T_0>0$.

Given a time  $t_0\in[T_0,T]$,
let us suppose that $\phi(\lambda e_1,t_0)=\lambda$ and $\phi(x,t_0)\geq x_1$.
For any given $\eps >0$ small, denote
$${ Q }=\{(x,t)\in  B_1(0)\times(0,t_0] \ |\ u(x,t)<(1+\vep)x_1\}.$$
Let ${ Q }(t)=\{x \ | \ (x,t)\in { Q }\}$.
By the local strict convexity of $u$, we have
${ Q }(t_0) \subset\subset B_1(0)$ for $\eps $ small.
Set
$$\underline t=:\inf\{t \ |\  { Q }(t) \ne \emptyset\}.$$
According to Lemma \ref{ut-sub} and Corollary \ref{ut-sup},   one knows
\begin{equation}
t_0-\underline t\approx \eps .
\end{equation}
Since $u$ is smooth and parabolically convex away from the origin, ${ Q }$ is bowl-shaped and $u(x,t)-(1+\vep) x_1=0$ on the parabolic boundary $\partial_p { Q }$.

For any point $x=(x_1, \tilde x)\in { Q }(t_0)$, where $x_1>0$ and $\tilde x=(x_2, \cdots, x_n)$, we have
\begin{equation}\label{uphi}
\begin{split}
\vep x_1  &> u(x,t_0)-\phi(x_1,0,t_0) \\
&\ge \phi(x,t_0)-\phi(x_1,0,t_0)\\
&= D_{\tilde x}\phi(x_1,0,t_0)\cdot \tilde x +  {\tilde x}^T D^2_{\tilde x\tilde x}\phi(x_1,\sigma\tilde x,t_0) \tilde x,\quad \sigma \in (0,1)\\
 &\ge  c\frac{|\tilde x|^2}{x_1+|\tilde x|}.
\end{split}
\end{equation}
Hence, by Cauchy's inequality,
\begin{equation}\label{e-te}
{ Q }(t_0)\subset\big\{x\in B_1(0)\ | \ |\tilde x|\le \frac{\sqrt{1+2c}}{c}\,\vep^{\frac 12} x_1\big\}.
\end{equation}

Denote
\begin{equation}\label{te}
\begin{split}
\alpha_{t_0,\vep} & =\sup\{\alpha\ | \ \alpha e_1\in { Q }(t_0)\}, \ \   \\
\beta_{t_0,\vep} & =\sup\{x_1\ | \ x\in { Q }(t_0)\}.
\end{split}
\end{equation}
Then,
$$
u(\alpha_{t_0,\vep}e_1,t_0)-\phi(\alpha_{t_0,\vep}e_1,t_0) =\vep\alpha_{t_0,\vep}.
$$
By \eqref{asyw},  we have
$$
u(\alpha_{t_0,\vep}e_1,t_0)-\phi(\alpha_{t_0,\vep}e_1,t_0) \approx \alpha_{t_0,\vep}^{n+1-1/p} .
$$
This implies
$$ \beta_{t_0,\vep} \ge \alpha_{t_0,\vep}\approx \vep^{\frac p{pn-1}}.$$
By the definition of $\beta_{t_0,\vep}$ in \eqref{te}, and by the strict convexity of $u$,
there exists a unique $\tilde x_{t_0,\vep}$ such that
$(\beta_{t_0,\vep}, \tilde x_{t_0,\vep})\in \partial { Q }(t_0)$.
Hence by \eqref{asyw},
\begin{equation*}
\begin{split}
\vep \beta_{t_0,\vep} &=u(\beta_{t_0,\vep}, \tilde x_{t_0,\vep},t)-\phi(\beta_{t_0,\vep},0,t_0) \\
 &\ge \phi(\beta_{t_0,\vep}, \tilde x_{t_0,\vep},t_0) -\phi(\beta_{t_0,\vep},0,t_0)
      +C(\beta_{t_0,\vep}^2 +|\tilde x_{t_0,\vep}|^2)^{\frac {n+1-1/p}{2}}\\
 & \ge C\beta_{t_0,\vep}^{n+1-1/p},
\end{split}
\end{equation*}
which implies $\beta_{t_0,\vep}\le C\vep^{\frac p{np-1}}$.
Hence
$$\alpha_{t_0,\vep}\approx \beta_{t_0,\vep}\approx \vep^{\frac p{np-1}}.$$

Make the coordinate change $x\rightarrow y=T_{t_0}(x)$, given by
\begin{equation}
y_1=\frac{x_1}{\alpha_{t_0,\vep}}, \ \   y_k=\frac{x_k}{\vep^{\frac 12}\alpha_{t_0,\vep}}\ \ \ (k=2,\cdots,n).
\end{equation}
We claim that $T_{t_0}({ Q }(t_0))$ has a good shape, namely,
\begin{equation*}
T_{t_0}({ Q }(t_0)) \sim \{y=(y_1,\tilde y)\in\R^n\ | \ |\tilde y|< y_1, 0<y_1<1\}.
\end{equation*}
Indeed,
denote $\tilde { Q }(t_0) = { Q }(t_0)\cap\{x_1=\tau_{t_0} \alpha_{t_0,\vep}\}$, where $\tau_{t_0}>0$ is a small constant.
Then it suffices to prove
$$\tilde { Q }(t_0) \sim \{|\tilde x|<\vep^{1/2}\alpha_{t_0,\vep}\} , $$
as convex domains in $\R^{n-1}$.
From \eqref{e-te}, we have
$$
\tilde { Q }(t_0)\subset \{|\tilde x|<C\vep^{1/2}\alpha_{t_0,\vep}\}.
$$
For each point  $(x_1, \tilde x)\in\partial \tilde { Q }(t_0)$, $u(x_1, \tilde x,t_0)=(1+\vep) x_1$, and
by Corollary \ref{C3.2} and  \eqref{asyw}, we have
\begin{equation*}
\begin{split}
u(x_1, \tilde x,t_0)
   &\le \phi(x_1, \tilde x,t_0)+C\big(|x_1|^2+|\tilde x|^2\big)^{\frac{n+1-1/p}{2}} \\
& = \phi(x_1, 0,t_0) + D_{\tilde x}\phi(x_1,0,t_0)\cdot \tilde x + \tilde x^T D^2_{\tilde x\tilde x}\phi(x_1,\sigma \tilde x,t_0)\tilde x +C\big(|x_1|^2+|\tilde x|^2\big)^{\frac{n+1-1/p}{2}} \\
&  \le x_1 + C{\text{$\frac{|\tilde x|^2}{x_1}$}} + 2 C x_1^{n+1-1/p} + 2 C |\tilde x|^{n+1-1/p},
\end{split}
\end{equation*}
which yields 
$${\begin{split}
C(1+2 x_1|\tilde x|^{n-1-1/p})|\tilde x|^2
 &\ge \vep x_1^2 - 2 C x_1^{n+2-1/p} \\
 & = \tau_{t_0}^2 \vep \alpha_{t_0,\vep}^2 - 2C \tau_{t_0}^{n+2-1/p}\alpha_{t_0,\vep}^{n+2-1/p}.
\end{split} }$$
Choosing a small positive constant $\tau_{t_0} <<1$  such that
$$ \tau_{t_0}^{n+2-1/p}\alpha_{t_0,\vep}^{n+2-1/p} \approx \tau_{t_0}^{n+2-1/p}\vep \alpha_{t_0,\vep}^2 <<\tau_{t_0}^2 \vep \alpha_{t_0,\vep}^2,$$
we get $|\tilde x|\ge c\vep^{1/2} \alpha_{t_0,\vep}$ for some positive constant $c$. Thus,
$$ \{|\tilde x|<c\vep^{1/2} \alpha_{t_0,\vep}\}\subset \tilde { Q }(t_0).$$
The claim follows.

By \eqref{asyw}, we also have
\begin{equation}
\inf_{{ Q }(t_0)} \big(u(x,t_0)- (1+\vep)x_1\big)
 =  \inf_{{ Q }(t_0)} \big(\phi(x,t_0)+w(x,t_0) - (1+\vep) x_1\big)
\approx -\vep \alpha_{t_0,\vep}.
\end{equation}
Indeed, taking $x= \tau\alpha_{t_0,\vep}e_1$, we have \begin{equation*}
\begin{split}
\phi(x,t_0)+w(x,t_0) - (1+\vep) x_1 &= w(\tau\alpha_{t_0,\vep}e_1,t_0) - \vep\tau\alpha_{t_0,\vep} \\
&\le C(\tau\alpha_{t_0,\vep})^{n+1-1/p} - \vep\tau\alpha_{t_0,\vep} \\
& \le -\frac12 \vep\tau\alpha_{t_0,\vep}
\end{split}
\end{equation*}
for some small positive constant $\tau$.

Let
\begin{equation*}
\begin{split}
 \tilde u(y,s) & =\frac{u(x, t)-  (1+\vep) x_1}{\vep \alpha_{t_0,\vep}},\\
  s &=\frac{t-t_0}{\eps } .
  \end{split}
\end{equation*}
Then $\tilde u$ satisfies, for $s\in(\frac{\underline t-t_0}{\eps } , 0),$
\begin{equation}
\begin{split}
-\tilde u_s(\det D^2 \tilde u)^p &= \tilde g(y)  \ \   \text{in} \ \    T_{t_0}({ Q }),\\
\tilde u &=0  \ \  \ \ \ \ \text{on} \ \   \partial_p\big(T_{t_0}({ Q })\big) ,
\end{split}
\end{equation}
where
$$\tilde g(y)= \vep^{-p} \alpha_{t_0,\vep}^{np-1}(1+|x|^2 )^{-((n+2)p-1)/2}\approx 1 .$$
Notice that
\begin{equation*}
|\tilde u_s|=\frac{|u_t|}{\alpha_{t_0,\eps }}\approx  \sqrt{y_1^2+\eps  |\tilde y|^2}.
\end{equation*}
Hence, $ \forall \ s\in \left(\frac{\underline t-t_0}{\eps } , 0\right)$,
\begin{equation}\label{ep-ma}
\begin{split}
\det D^2 \tilde u&\approx (y_1^2+\eps  |\tilde y|^2)^{-\frac{1}{2p}} \ \   \text{in} \ \   \Sigma_{t_0}(s),\\
\tilde u &=0 \,\,\,  \quad\quad\quad\quad\quad\quad  \text{on} \ \   \partial(\Sigma_{t_0}(s)).
\end{split}  \ \
\end{equation}
 Here we denote by $\Sigma_{t_0}(s)=: T_{t_0}({ Q }(\eps  s+t_0)) \subset \mathbb R^n$ for simplicity.
Applying Alexandrov's maximum principle
\cite[Theorem 2.8]{F2017}  to \eqref{ep-ma},
we obtain
\begin{equation}\label{holder}
\begin{split}
|\tilde u(y,s)|^n\le& C d(y,\partial(\Sigma_{t_0}(s))) \int_{\Sigma_{t_0}(s)} (y_1^2+\eps  |\tilde y|^2)^{-\frac{1}{2p}}dy\\
\le & Cd(y,\partial(\Sigma_{t_0}(s))) \int_{\{|\tilde y|\le cy_1,0<y_1<c \}} y_1^{-\frac 1p}dy
\\ \le &Cd(y,\partial(\Sigma_{t_0}(s))), \ \   \text{provided} \ \   p>\frac{1}{n}.
\end{split}
\end{equation}
In the above inequality, we have used the fact that  $\tilde u$ is parabolic convex and $T_{t_0}({ Q }(t_0))$ has a good shape.
Set
$$h_0=: \sup_{T_{t_0}({ Q }(t_0))}|\tilde u(y,0 )| \approx 1.$$
By \eqref{holder}, we have
\begin{equation}
S_{{h_0/2},\tilde u}(s)\subset\subset S_{{h_0/4},\tilde u}(s)\subset\subset \Sigma_{t_0}(s)
\end{equation}
when
$S_{{h_0/2},\tilde u}(s)=\{y\in B_1(0)\ |\   \tilde u(y,s )<-h_0/2\}$
 is non-empty. Moreover
\begin{equation*}
d\big(\partial S_{{h_0/2},\tilde u}(s),\partial S_{{h_0/4},\tilde u}(s) \big),
      \ \   d\big(\partial S_{{h_0/4},\tilde u}(s),\partial (\Sigma_{t_0}(s)) \big)\ge C^{-1}.
\end{equation*}

Now, applying Lemma \ref{CH-IUMJ} to $\tilde u$ on
$\{(y,s)\ |\ \tilde u(y,s)<-h_0/4\}$
yields that
\begin{equation}\label{rupb}
\| D^2 \tilde u(\cdot,0)\|_{S_{h_0/2,\tilde u}(0)}\le C.
\end{equation}
Restricting to the $x_1$-axis,  we obtain
\begin{equation*}
\|D^2_{y_1}\tilde w\|_{S_{h_0/2,\tilde u}(0)\cap\{|\tilde x|=0\} }
 = \|D^2_{y_1}\tilde u\|_{S_{h_0/2,\tilde u}(0)\cap\{|\tilde x|=0\} }
 \le C,
\end{equation*}
where $\tilde w(y,s )=\frac{w(x,t)}{\eps  \alpha_{t_0,\vep}}$.
Scaling back to the original coordinates, we obtain
\begin{equation*}
|D^2_{x_1} w(x,t_0)|\le C \alpha_{t_0,\vep}^{n-1-1/p}  \ \   \text{at}\ x=\rho e_1
\end{equation*}
 with $\rho\approx \alpha_{t_0,\vep}$, 
which yields the first estimate in \eqref{upb}.
The second and third estimates in \eqref{upb} also follows from \eqref{rupb} by rescaling.
\end{proof}

\vspace{3mm}
\section{Bernstein theorem for a singular parabolic Monge-Amp\`ere equation}\label{s4}
\vspace{2mm}

In this section, we prove a Bernstein theorem for the following singular parabolic Monge-Amp\`ere type equation
\begin{equation}\label{blow1}
-\psi_t\det \begin{pmatrix}
   \psi_{x_nx_n}+b\frac{\psi_{x_n}}{x_n}&    \psi_{x_nx_1}  & \cdots &   \psi_{x_nx_{n-1}}\\[3pt]
   \psi_{x_nx_1}& \psi_{x_1x_1} & \cdots&\psi_{x_1x_{n-1}}\\[3pt]
   \cdots &\cdots &\cdots &\cdots \\[3pt]
  \psi_{x_nx_{n-1}} &\psi_{x_1x_{n-1}} & \cdots&\psi_{x_{n-1}x_{n-1}}
 \end{pmatrix}^p=1 \ \   \text{in} \ \   \mathbb R^{n,+} \times (-\infty,0] ,
\end{equation}
where $b$ is a constant.
Equation \eqref{blow1} arises in a blow-up argument for equation \eqref{po2}.

We have the following Bernstein theorem.

\begin{theorem}\label{thmbern}
Assume that the equation \eqref{blow1} is uniformly parabolic and $b> -1$ is a constant.
Let $\psi(x,t)\in C^{1,1}(\overline{\mathbb R^{n,+}}\times(-\infty,0])$
be a solution to \eqref{blow1}.
Assume   $\psi(0,0)=0$, $D_{x} \psi(0,0)=0$, and
$\psi_{x_n}(x',0,t)=0\ \forall\ x'\in\R^{n-1}$, $t\in(-\infty,0]$.
Then $\psi$ has the form
\begin{equation}\label{qp1}
\psi(x,t)=\frac 12 {\Small\text{$ \sum_{i,j=1}^{n-1} $}} c_{ij} x_ix_j+\frac{1}{2}c_{nn}x_n^2 - c_0 t ,
\end{equation}
where the matrix $(c_{ij})_{i,j=1}^{n-1}$ is positive definite and $c_{nn}, c_0>0$.
\end{theorem}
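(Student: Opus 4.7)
The plan is to first prove that $\psi_t$ is a negative constant---so that \eqref{blow1} reduces to a singular \emph{elliptic} Monge-Amp\`ere equation on $\R^{n,+}$---and then to invoke the Bernstein-type argument of \cite{HTW} to identify $\phi:=\psi+c_0 t$ as a quadratic polynomial with the required block structure. The boundary condition $\psi_{x_n}(x',0,t)=0$ plays two crucial roles here: it turns the singular coefficient $b/x_n$ into a bounded one when acting on relevant derivatives of $\psi$, and at the end it forces the cross coefficients $c_{in}$ ($i<n$) to vanish.

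To begin, take the logarithm of \eqref{blow1} and differentiate in $t$, obtaining $L(\psi_t)=0$ with
\[
L(v) := \frac{v_t}{\psi_t} + p\,M^{\alpha\beta}\,m_{\alpha\beta}[v],
\]
where $m_{\alpha\beta}[v]=v_{x_\alpha x_\beta}$ for $(\alpha,\beta)\neq(n,n)$, $m_{nn}[v]=v_{x_nx_n}+b\,v_{x_n}/x_n$, and $M^{\alpha\beta}$ is the inverse of the matrix appearing in \eqref{blow1}. By the uniform parabolicity hypothesis together with $\psi\in C^{1,1}$, the $M^{\alpha\beta}$ are bounded above and below by positive constants and $-\psi_t\approx 1$. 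Differentiating $\psi_{x_n}(x',0,t)=0$ in $t$ gives the Neumann condition $(\psi_t)_{x_n}(x',0,t)=0$, and the $C^{1,1}$ bound forces $(\psi_t)_{x_n}/x_n$ to be bounded up to $\{x_n=0\}$. Thus $L(\psi_t)=0$ is a bounded-coefficient uniformly parabolic linear equation for the bounded function $\psi_t$ on $\R^{n,+}\times(-\infty,0]$ with Neumann boundary data.

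Extending $\psi_t$ evenly across $\{x_n=0\}$ yields a bounded solution of a uniformly parabolic linear equation on all of $\R^n\times(-\infty,0]$; the singular drift $b/x_n$ extends to a bounded measurable coefficient thanks to the Neumann cancellation. The parabolic Liouville theorem (iterated Krylov--Safonov oscillation decay on expanding parabolic cylinders) then gives $\psi_t\equiv -c_0$ with $c_0>0$, positivity coming from $-\psi_t>0$. Integrating in $t$ we write $\psi(x,t)=-c_0 t+\phi(x)$, where $\phi\in C^{1,1}(\overline{\R^{n,+}})$ satisfies $\phi(0)=0$, $D\phi(0)=0$, $\phi_{x_n}(x',0)=0$, and solves the stationary singular Monge-Amp\`ere equation $\det M[\phi]=c_0^{-1/p}$ on $\R^{n,+}$.

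This elliptic problem is precisely of the type analysed in \cite{HTW}, and that argument---differentiating the equation, exploiting the concavity of $\log\det$, and applying the strong maximum principle to second derivatives---can be adapted to conclude that $\phi$ must be a quadratic polynomial. The condition $D\phi(0)=0$ eliminates the linear part, and $\phi_{x_n}(x',0)=0$ kills the coefficients of the cross terms $x_ix_n$ for $i<n$, leaving $\phi(x)=\tfrac12\sum_{i,j<n}c_{ij}x_ix_j+\tfrac12 c_{nn}x_n^2$; uniform parabolicity of the equation for $\phi$ then forces $(c_{ij})_{i,j<n}$ to be positive definite and $c_{nn}>0$, which combined with $\psi_t=-c_0$ yields \eqref{qp1}. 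I expect the main obstacle to be the Liouville step: one must verify rigorously that the even extension satisfies (in an appropriate weak sense) a \emph{genuinely} uniformly parabolic equation with bounded coefficients, and this depends crucially on the cancellation of the $1/x_n$ singularity by the vanishing Neumann trace---something that needs care given that only $C^{1,1}$ regularity is available a priori.
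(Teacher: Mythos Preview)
Your strategy---first show $\psi_t$ is constant via a linear Liouville argument, then reduce to the elliptic Bernstein theorem of \cite{HTW}---is natural and, if it worked, would be more direct than the paper's route. But the gap you flag at the end is real and, as stated, unfixable with the tools you invoke.

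The parabolic $C^{1,1}$ hypothesis means only that $\psi_t$, $D_x\psi$, and $D_x^2\psi$ are bounded; it says nothing about mixed derivatives such as $\psi_{tx_n}$ or $\psi_{tx_nx_n}$. Hence your claim that ``the $C^{1,1}$ bound forces $(\psi_t)_{x_n}/x_n$ to be bounded'' is unjustified: to bound $(\psi_t)_{x_n}/x_n$ near $\{x_n=0\}$ you would need $(\psi_t)_{x_n}(x',0,t)=0$ together with $(\psi_t)_{x_nx_n}\in L^\infty$, and neither is available a priori. Without this, the singular drift $b/x_n$ in the linearized equation does not reduce to a bounded coefficient, the even extension does not solve a genuinely uniformly parabolic equation on $\R^n$, and the Krylov--Safonov Liouville argument cannot be applied. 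In short, the very regularity bootstrap you need is what the theorem is meant to deliver.

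The paper circumvents this by a different mechanism. It proves a scale-invariant $C^{2+\alpha}$ estimate for $\psi$ up to $\{x_n=0\}$ (Lemma~\ref{lemconti}), obtained by passing to the partial Legendre transform $\psi^*$, for which the singular term decouples into a one-dimensional Keldysh-type operator in $y_n$. Specialized H\"older estimates for such operators (Lemmas~\ref{l-4.1}--\ref{l-4.2}, quoted from \cite{L2016,DL2003}) yield $\psi^*_{y_n}/y_n\in C^\alpha$, hence $\psi_{x_n}/x_n\in C^\alpha$; treating this as a known H\"older coefficient, the equation becomes a concave fully nonlinear uniformly parabolic equation after even extension, and standard $C^{2+\alpha}$ theory applies. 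The Bernstein conclusion then follows by a blow-down: $\psi^m(x,t)=m^{-2}\psi(mx,m^2t)$ enjoys the same uniform $C^{2+\alpha}$ bound, so $D_x^2\psi$ and $\psi_t$ agree with their values at the origin. The substance of the proof lies in Lemma~\ref{lemholder} and Lemma~\ref{lemconti}; your proposal skips precisely this step.
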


To prove the Bernstein theorem, we first study the linear singular operator
\begin{equation}\label{l+}
L_0 U  =:  - U_t + {\Small\text{$\sum_{i,j=1}^{n}$}} a^{ij}\p_{ij}U  + \frac{b^n}{x_n}\p_n U 
\end{equation}
with variable coefficients $a^{ij}$ and $b^n$ defined in $\mathbb R^{n,+}\times(-\infty,0]$.
Assume that $a^{ij}$ and $b^n$ satisfy
\begin{equation}\label{calpha1}
\Lambda^{-1} |\xi|^2 \le a^{ij}\xi_i\xi_j\le \Lambda |\xi|^2  \ \   \forall~\xi\in\mathbb R^n\backslash  \{0\},
\end{equation}
and
\begin{equation}\label{calpha3}
\frac{b^n}{a^{nn}} = b \in (0, \Lambda] \ \ \text{is~a~constant},
\end{equation}
for some positive constant $\Lambda$.

For a given point $p_0=(x_0,t_0)\in \mathbb R^{n,+} \times \mathbb R$, denote
\begin{equation}\label{box-r}
Q_\rho(p_0)=\{(x,t)\in \mathbb R^{n,+} \times \mathbb R \ |\ x_n >  0, |x-x_0| < \rho, t_0-\rho^2 < t \le t_0\} .
\end{equation}
If $p_0=(0, 0)$, we will write $Q_\rho(p_0)$ simply as $Q_\rho$.
Denote $\partial_p Q_\rho=\overline Q_\rho \backslash Q_\rho$ the parabolic boundary of $Q_\rho$,
and denote $\p_0 Q_\rho=\partial_p Q_\rho\cap\{x_n=0\}$,
$\p' Q_\rho= \partial_p Q_\rho \backslash \p_0 Q_\rho$.

\begin{lemma}\cite[Lemma 5.1]{L2016}\label{l-4.1} 
Assume that $a^{ij}, b^n\in C^{\infty}(\overline{Q_\rho})$
and satisfy conditions \eqref{calpha1}--\eqref{calpha3}.
Then for any function $\varphi\in C(\partial' Q_\rho)$,
there exists a unique solution $U\in C^0(\overline{Q_\rho})\cap C^2 (Q_\rho \cup \p_0 Q_\rho)$ to
\begin{equation}
\left\{ {\begin{split}
&L_0 U =0 \,\,\text{in}\,\,Q_\rho,  \\
&  U =\varphi \,\,\text{on}\,\,\p' Q_\rho, \\
& \p_n U =0 \,\,\text{on}\,\,  \p_0 Q_\rho.
\end{split}} \right.
\end{equation}
Moreover, $\sup_{Q_\rho} |U|$ is bounded by $\sup_{\partial' Q_\rho} |\varphi|$.
\end{lemma}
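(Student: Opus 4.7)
My strategy is to obtain existence by smoothing the singular coefficient $b^n/x_n$, and to obtain uniqueness together with the $L^\infty$ bound by a maximum principle adapted to the oblique boundary condition. First I would replace $1/x_n$ by $1/(x_n+\eps)$ for $\eps>0$, giving a family of operators $L_0^\eps$ that are uniformly parabolic with smooth bounded coefficients on $\overline{Q_\rho}$. Standard linear parabolic theory for the oblique derivative problem (as in Lieberman's \emph{Second Order Parabolic Differential Equations}) then furnishes a classical solution $U^\eps \in C^0(\overline{Q_\rho})\cap C^{2+\alpha,1+\alpha/2}(Q_\rho\cup\partial_0 Q_\rho)$ with $U^\eps=\varphi$ on $\partial' Q_\rho$ and $\partial_n U^\eps=0$ on $\partial_0 Q_\rho$. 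The weak maximum principle for $L_0^\eps$, combined with Hopf's lemma on $\partial_0 Q_\rho$ to rule out a boundary maximum (which would force $\partial_n U^\eps<0$, contradicting the Neumann condition), yields $\|U^\eps\|_{L^\infty}\le \sup_{\partial' Q_\rho}|\varphi|$ uniformly in $\eps$.

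Next I would extract uniform regularity and pass to the limit $\eps\to 0$. Interior Schauder estimates produce uniform $C^{2+\alpha}$ control on compact subsets of $Q_\rho$. Uniform control up to $\partial_0 Q_\rho$ is obtained by exploiting the structural observation that, when $b$ is a nonnegative integer $k$, the operator $\partial_{nn}+\tfrac{k}{x_n+\eps}\partial_n$ (acting on functions satisfying the Neumann condition at $\{x_n=0\}$) is the radial part of the $(k+1)$-dimensional Laplacian; lifting $U^\eps$ to an axially symmetric function on $\R^{n-1}\times\R^{k+1}$ transforms the problem into a uniformly parabolic one to which standard Schauder theory applies. For general $b>-1$ the same interpretation works via the fractional-dimensional heat kernel, or equivalently via an even reflection in $x_n$ together with the weight $x_n^{b}$. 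By Arzel\`a--Ascoli a subsequence $U^{\eps_k}$ converges in $C^0(\overline{Q_\rho})\cap C^2_{\mathrm{loc}}(Q_\rho\cup\partial_0 Q_\rho)$ to a limit $U$ solving $L_0 U=0$ with the prescribed Dirichlet data. The equation together with boundedness of $U_t,U_{ij}$ up to $\{x_n=0\}$ forces $U_n/x_n$ to stay bounded as $x_n\to 0$, and combined with $U\in C^2$ this implies $U_n(x',0,t)=0$ in the classical sense.

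For uniqueness, let $W=U_1-U_2$. Then $L_0 W=0$, $W=0$ on $\partial' Q_\rho$, and $\partial_n W=0$ on $\partial_0 Q_\rho$. A positive maximum of $W$ cannot lie in $Q_\rho$ by the parabolic strong maximum principle. If such a maximum were attained at a point of $\partial_0 Q_\rho$, then the Neumann condition and $W\in C^2$ give $W_n/x_n\to W_{nn}(x',0,t)$; substituting into $L_0 W=0$ absorbs the singular term $(b^n/x_n)W_n$ into a uniformly parabolic operator in a neighborhood of that boundary point, to which Hopf's lemma applies and produces the required contradiction. Uniqueness follows, and applying the same argument to $W=U-\sup_{\partial' Q_\rho}|\varphi|$ delivers the claimed $L^\infty$ bound.

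The main obstacle is the uniform-in-$\eps$ regularity of $U^\eps$ up to $\partial_0 Q_\rho$, since the singular coefficient $b/(x_n+\eps)$ blows up in the limit; this is precisely tamed by the Neumann condition via the radial/axially symmetric reinterpretation described above, which turns the one-dimensional singular operator into a regular higher-dimensional Laplacian and allows standard parabolic Schauder theory to be invoked.
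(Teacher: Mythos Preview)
The paper does not actually prove this lemma: it cites \cite[Lemma~5.1]{L2016} (stated there for constant coefficients) and simply asserts that the same proof carries over to smooth coefficients, omitting all details. So there is nothing in the paper to compare against beyond that reference.

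Your outline is reasonable, and the maximum-principle arguments for uniqueness and the $L^\infty$ bound are fine once $U\in C^2(Q_\rho\cup\partial_0 Q_\rho)$ is in hand. The gap is in the existence step. You regularize the singular drift to $b^n/(x_n+\eps)$ and then claim uniform-in-$\eps$ Schauder control up to $\{x_n=0\}$ via the radial-Laplacian lifting, but $\partial_{nn}+\tfrac{k}{x_n+\eps}\,\partial_n$ is \emph{not} the radial part of any Euclidean Laplacian --- that identity holds only for the unregularized coefficient $k/x_n$. After regularizing you have destroyed precisely the structure the lifting exploits, so the uniform boundary estimates are unjustified as written. The clean fix is to drop the $\eps$-regularization and lift the original singular problem directly: for integer $b=k$, setting $x_n=|z|$ with $z\in\R^{k+1}$ turns $L_0$ into a uniformly parabolic operator with H\"older coefficients on a domain in $\R^{n+k}\times\R$, the Neumann condition being encoded automatically by radial symmetry, and standard linear theory then gives existence together with $C^2$ regularity up to $\{x_n=0\}$. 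For general $b>0$ this is exactly what the Keldysh-type framework of \cite{L2016} is built to handle, which is why the paper simply points there.
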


This lemma was proved in \cite{L2016} for operator with constant coefficients.
But the proof also works for operators with smooth coefficients.
We omit the proof here.

Next we quote a lemma from \cite{L2016,DL2003}.

\begin{lemma}\label{l-4.2}
Assume the conditions in Lemma \ref{l-4.1}.
Then there exists $\alpha=\alpha_{n, \Lambda} \in (0, 1)$ such that  for any $\rho' \in(0, \rho)$
and any smooth function $U\in C^2(\overline {Q_\rho})$,
\begin{equation}\label{a418z}
\|U\|_{ C^\alpha(Q_{\rho'})}
     \le C \Big(\sup_{Q_\rho}|U|+\Big(\int_{Q_\rho}(L_0 U)^{n+1} dx dt\Big)^{\frac1{n+1}}\Big),
\end{equation}
where the constant $C$ depends only on $n, \Lambda, \rho$ and $\rho'$.
\end{lemma}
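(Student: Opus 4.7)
The plan is to establish this Hölder estimate by adapting the classical Krylov--Safonov program to the singular parabolic setting. The singular drift $\frac{b^n}{x_n}\partial_n$ with $b = b^n/a^{nn}>0$ a constant is of Bessel type: the operator $\partial_{nn} + \frac{b}{x_n}\partial_n$ coincides, up to a constant, with the radial part of the Euclidean Laplacian in $(b+1)$ dimensions. This suggests working with the weighted measure $d\nu = x_n^{b}\,dx\,dt$, with respect to which $L_0$ is naturally balanced. Combined with the implicit Neumann condition on $\{x_n=0\}$ enforced by Lemma \ref{l-4.1}, the problem effectively becomes an intrinsically uniformly parabolic one in $n+b+1$ ``dimensions,'' either literally via the standard Bessel extension when $b+1$ is an integer, or via the weighted-measure formalism in general.

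The first step is an Aleksandrov--Bakelman--Pucci (ABP) type maximum principle for $L_0$. For a supersolution $L_0 U \le f$ satisfying $U\ge 0$ on $\partial' Q_\rho$ and $\partial_n U = 0$ on $\partial_0 Q_\rho$, one expects a bound of the form
\[
\sup_{Q_\rho}\, U^{-} \;\le\; C\,\bigl\|f\bigr\|_{L^{n+1}(Q_\rho, d\nu)}.
\]
Using the Neumann condition to reflect across $\{x_n=0\}$ (or equivalently introducing the weighted contact set and applying the area formula with weight $x_n^{b}$), one reduces the estimate to the classical unweighted ABP in the extended geometry. Because $b\in (0,\Lambda]$ is bounded and $Q_\rho$ is bounded in $x_n$, the norms $\|\cdot\|_{L^{n+1}(d\nu)}$ and $\|\cdot\|_{L^{n+1}(dxdt)}$ on $Q_\rho$ are comparable (with a constant depending on $\rho$), which accounts for the plain $L^{n+1}$ norm appearing on the right of \eqref{a418z}.

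With ABP in hand, the second step is the measure-to-pointwise (weak Harnack) estimate: if $U\ge 0$ is a supersolution on $Q_\rho$ and $\nu(\{U\ge 1\}\cap Q_\rho)\ge \mu\,\nu(Q_\rho)$, then $\inf_{Q_{\rho/2}} U \ge c(n,\Lambda,\mu)>0$. This is proved, as in the classical Krylov--Safonov argument, by constructing appropriate barriers on $Q_\rho$; here the barriers must be modified by factors depending on $x_n$ so that they satisfy the Neumann condition on $\partial_0 Q_\rho$ and their singular-drift contributions can be absorbed. Iterating the resulting oscillation-decay estimate on a geometric sequence of parabolic cylinders $Q_{\rho 2^{-k}}(p_0)$ centered at interior or boundary points of $Q_{\rho'}$ produces
\[
\operatorname{osc}_{Q_{\rho 2^{-k}}(p_0)} U \;\le\; \theta^{k}\Bigl(\sup_{Q_\rho}|U| + \|L_0 U\|_{L^{n+1}(Q_\rho)}\Bigr),
\]
for some $\theta=\theta(n,\Lambda)\in(0,1)$, and a standard covering argument yields \eqref{a418z} with $\alpha = -\log\theta/\log 2$.

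The main obstacle is the ABP step: unlike in the uniformly parabolic case, one cannot simply drop the first-order term, since the singular drift blows up precisely where the Neumann condition forces critical points of $U$ to concentrate. The Bessel-extension viewpoint circumvents this by converting the singular problem into a genuinely uniformly parabolic one, at the price of working in the weighted $L^{n+1}(d\nu)$ class; the hypothesis $b > -1$ (more precisely $b>0$ here, via the smoothing hypothesis in Lemma \ref{l-4.1}) is exactly what is needed to make $d\nu$ a locally finite measure so that this reduction is legitimate. Once these ingredients are in place, the remainder of the argument follows the classical blueprint and the constants depend only on $n,\Lambda,\rho,\rho'$, as claimed.
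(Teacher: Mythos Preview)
The paper does not actually prove this lemma: it simply refers the reader to \cite[Theorem 3.1]{DL2003} and \cite[Theorem 3.3]{L2016} for the details. Your outline is a reasonable description of the Krylov--Safonov strategy those references carry out---an ABP-type maximum principle adapted to the Bessel operator via the weight $x_n^b$, followed by a weak Harnack inequality and oscillation decay---so in spirit your approach coincides with what is being cited.

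One point to be careful about: the lemma does \emph{not} assume the Neumann condition $\partial_n U=0$ on $\{x_n=0\}$; it is stated for arbitrary $U\in C^2(\overline{Q_\rho})$. Your sketch invokes reflection across $\{x_n=0\}$ ``using the Neumann condition,'' which is not available for $U$ itself. In the cited works the reflection/extension is applied to barriers and auxiliary functions built from $U$, not to $U$ directly; the role of the singular drift is that for any $C^2$ function with $L_0 U\in L^{n+1}$ the term $\frac{b^n}{x_n}U_n$ must already be in $L^{n+1}$, which forces enough decay of $U_n$ at the boundary for the argument to go through. As written, your ABP step leans on a hypothesis that is not part of the statement, so if you were to flesh this out you would need to rephrase that step accordingly.
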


\begin{remark}
We refer the readers to \cite[Theorem 3.1]{DL2003} or \cite[Theorem 3.3]{L2016} for the details of the proof.
\end{remark}

To apply the above lemmas to the singular parabolic Monge-Amp\`ere equation \eqref{blow1},
we make the  partial Legendre transform \cite{LS17, HTW},
\begin{equation} \label{plt}
\begin{split}
   y_n&=x_n, \ \   \\
   y'&=D_{x'}\psi, \\
   \psi^*&=x'\cdot D_{x'}\psi-\psi.
  \end{split}
\end{equation}
Then by direct computations \cite{LS17,HTW},
equation  \eqref{blow1} is changed to
\begin{equation}\label{002}
\psi^*_t \left(-\psi^*_{y_ny_n}-b\frac{\psi^*_{y_n}}{y_n}\right)^p-\left(\det D^2_{y'} \psi^*\right)^p=0  \ \   \text{in} \ \   \mathbb R^{n,+}\times(-\infty,0].
\end{equation}

\begin{lemma}\label{lemholder}
Let $\psi^* \in C^{1,1}(\overline{\mathbb R^{n,+}}\times(-\infty,0])$ be a solution to  \eqref{002}
with the constant  $b> -1$.
Assume that $\psi_{y_n}^*(y',0,t)=0 ~\forall\ y'\in\R^{n-1}$ and $t\in(-\infty,0]$,
and $D_{y'}^2 \psi^*$ is positive definite.
Then
$\frac{\psi^*_{y_n}}{y_n}\in C^{\alpha} (\overline{\mathbb R^{n,+}}\times(-\infty,0])$ for some $\alpha\in(0,1)$, and
we have the estimate
\begin{equation}\label{HE}
\Big\|\frac{\psi^*_{y_n}}{y_n}\Big\|_{C^{\alpha} (\mathbb R^{n-1}\times [0,1]\times (-\infty, 0])}  \le C
\end{equation}
for a positive constant $C$ depending only on $b, n$,
$\|\psi_t^*\|_{L^\infty(\mathbb R^{n,+}\times(-\infty,0])}$,
$\|D_y^2 \psi^*\|_{L^\infty(\mathbb R^{n,+}\times(-\infty,0])}$
and
$\|(\det D^2_{y'}\psi^*)^{-1}\|_{L^\infty(\mathbb R^{n,+}\times(-\infty,0])}$.
\end{lemma}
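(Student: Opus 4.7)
The plan is to show that $W := \psi^*_{y_n}/y_n$ satisfies a linear singular parabolic equation of the form \eqref{l+} with a \emph{positive} parameter, and then to invoke the De Giorgi--Nash--Moser type H\"older estimate of Lemma~\ref{l-4.2}. Since $\psi^*_{y_n}(y',0,t) = 0$ and $\psi^* \in C^{1,1}$, the identity $W(y',y_n,t) = \int_0^1 \psi^*_{y_n y_n}(y', sy_n, t)\, ds$ first shows that $\|W\|_{L^\infty}$ is controlled by $\|D^2\psi^*\|_{L^\infty}$. Combined with the upper bound on $(\det D^2_{y'}\psi^*)^{-1}$, the quantities $A := -\psi^*_{y_n y_n} - bW$ and $B := \det D^2_{y'}\psi^*$ are both bounded between positive constants, so \eqref{002} forces $\psi^*_t = (B/A)^p$ to be bounded between positive constants as well; this is the quantitative uniform parabolicity used below.

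The crucial step is to differentiate \eqref{002} in the $y_n$-direction. Writing $\psi^*_{y_n} = y_n W$ and applying the product rule to the singular term $bW = b\psi^*_{y_n}/y_n$, a direct calculation yields
\[
L_0(\psi^*_{y_n}) = a^{nn}\, b\, \frac{W}{y_n},
\]
where $L_0$ is of the form \eqref{l+} with $a^{nn} = p\psi^*_t/A$ and the tangential block $(a^{ij})_{i,j<n}$ equal to a positive scalar multiple of the cofactor matrix of $D^2_{y'}\psi^*$; crucially the mixed coefficients $a^{nk}$ for $k<n$ vanish, since neither $A$ nor $B$ involves the cross second derivatives $\psi^*_{y_k y_n}$. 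Substituting $\psi^*_{y_n} = y_n W$ back into $L_0(\psi^*_{y_n})$, expanding via the product rule, and dividing through by $y_n$, the singular pieces $a^{nn}bW/y_n$ on the two sides cancel and one is left with
\[
-W_t + a^{nn}\Bigl(W_{y_n y_n} + (b+2)\frac{W_{y_n}}{y_n}\Bigr) + \sum_{i,j<n} a^{ij}\, W_{ij} = 0,
\]
which is precisely of the form \eqref{l+} with parameter $b' = b+2$. Since $b > -1$ we have $b' > 1 > 0$, so condition \eqref{calpha3} holds, while \eqref{calpha1} follows from the quantitative parabolicity in the previous paragraph.

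Applying Lemma~\ref{l-4.2} on unit parabolic cylinders $Q_1(p_0) \subset \overline{\R^{n,+}}\times(-\infty,0]$ then yields $\|W\|_{C^\alpha(Q_{1/2}(p_0))} \le C \sup_{Q_1(p_0)}|W| \le C\|D^2\psi^*\|_{L^\infty}$, and translation invariance in $(y',t)$ together with a standard covering argument gives the global estimate \eqref{HE}. The main obstacle is the algebraic verification that, after differentiation and substitution, the singular terms align so that $W$ satisfies a bona fide equation of the form \eqref{l+} with the positive coefficient $b+2$ in front of the singular term; a secondary technical point is that Lemma~\ref{l-4.2} presupposes $C^2$ regularity of $W$, which I handle via the usual device of approximating \eqref{002} by smooth uniformly parabolic problems, applying the estimate with constants depending only on the listed quantities, and passing to the limit.
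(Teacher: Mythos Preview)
Your strategy coincides with the paper's: differentiate \eqref{002} in $y_n$, rewrite in terms of $W=\psi^*_{y_n}/y_n$ to obtain a linear equation of the form \eqref{l+} with singular parameter $b+2$, and invoke Lemma~\ref{l-4.2}. The algebraic derivation you outline (including the vanishing of the mixed coefficients $a^{nk}$, $k<n$) is exactly what the paper does.

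The one step that is more than a technicality is the regularization, and here your sketch diverges from the paper and is not quite adequate. You propose to approximate the \emph{nonlinear} equation \eqref{002} by smooth uniformly parabolic problems; but on the unbounded domain $\overline{\R^{n,+}}\times(-\infty,0]$ it is unclear how to produce approximate solutions close to the given $\psi^*$ with the correct boundary behaviour at $\{y_n=0\}$. The paper instead works with the derived \emph{linear} equation \eqref{psi-yn}: it smooths the coefficients $a^{ij}$ and the lateral boundary data, uses Lemma~\ref{l-4.1} to solve the resulting linear problems on $Q_\rho$ (this is where the existence theory enters), applies Lemma~\ref{l-4.2} to those smooth solutions $\tilde\Psi_k$, and extracts a limit $\tilde\Psi\in C^\alpha\cap C^\infty(\{y_n>0\})$ solving \eqref{psi-yn}. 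The substantive remaining point is to identify $\tilde\Psi$ with $\Psi=W$: this is done by a barrier, observing that for $\beta\in(0,b+1]$ the function $h_\epsilon=\tilde\Psi-\Psi+\epsilon y_n^{-\beta}$ satisfies $\tilde L h_\epsilon\le 0$ and tends to $+\infty$ as $y_n\to 0^+$, so the maximum principle gives $\tilde\Psi\ge\Psi$, and symmetrically $\tilde\Psi\le\Psi$. This barrier step is precisely where the hypothesis $b>-1$ (rather than merely $b>-2$, which would already make $b+2>0$) is used, and it is the genuine content missing from your proposal.
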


\begin{proof}
Differentiating equation \eqref{002}  with respect to $y_n$ yields
\begin{equation*}
\frac{\psi^*_{y_n t}}{\psi^*_t}+p\frac{\psi^*_{y_ny_ny_n}+b\left(\frac{\psi^*_{y_n}}{y_n}\right)_{y_n}}{\psi^*_{y_ny_n}+b\frac{\psi^*_{y_n}}{y_n}}=p\sum_{i,j=1}^{n-1}\Psi'^{*ij}\psi^*_{y_iy_jy_n}
\end{equation*}
where $\Psi'^{*ij}$ is the inverse matrix of $D_{y'}^2 \psi^*$.  Note that
\begin{equation*}
\left(\frac{\psi^*_{y_n}}{y_n}\right)_{y_ny_n}=\frac{\psi^*_{y_ny_ny_n}}{y_n}-2\left(\frac{\psi^*_{y_n}}{y_n}\right)_{y_n}.
\end{equation*}
Then we find that  $\Psi(y,t)=\frac{\psi^*_{y_n}}{y_n}$ satisfies
\begin{equation}\label{psi-yn}
\tilde L(\Psi)=:
  -\Psi_t  +a^{nn} \Big(\Psi_{y_ny_n}+\frac{b+2}{y_n} \Psi_{y_n}\Big)
       + {\Small\text{$  \sum_{i,j=1}^{n-1} $}}  a^{ij} \Psi_{y_iy_j}=0
\end{equation}
where
 \begin{equation*}
 (a^{ij})_{i,j=1}^{n-1}\approx I_{(n-1)\times (n-1)}, \ \   a^{nn}\approx 1,
\end{equation*}
and $I_{(n-1)\times (n-1)}$ is the unit matrix. Note that
\begin{equation*}
\Psi(y,t)=\frac{\psi^*_{y_n}}{y_n} = \int_{0}^1 \psi^*_{y_ny_n}(y',sy_n,t) ds \in L^\infty(\mathbb R^{n,+}\times(-\infty,0]).
\end{equation*}

Take a sequence of smooth functions $a^{ij}_k,a^{nn}_k, \varphi_k\in C^\infty(\overline{\mathbb R^{n,+}}\times (-\infty,0])$ such that
\begin{equation*}
(a^{ij}_k)_{i,j=1}^{n-1}\approx I_{(n-1)\times (n-1)}, \ \   a^{nn}_k\approx 1, \ \   |\varphi_k|\lesssim 1
\end{equation*}
and
\begin{equation*}
a^{ij}_k\rightarrow a^{ij}, \ \   a^{nn}_k\rightarrow a^{nn}, \ \
  \varphi_k\rightarrow \frac{\psi_{y_n}^*}{y_n}, \ \   \text{in} \ \   C^\infty_{loc}(\mathbb R^{n,+}\times (-\infty,0]).
\end{equation*}
Define
\begin{equation}\label{psi-yn-k}
\tilde L_k=: -\partial_t  +a^{nn}_k \Big(\partial_{y_ny_n}+\frac{b+2}{y_n} \partial_{y_n}\Big)
 + {\Small\text{$  \sum_{i,j=1}^{n-1} $}}  a^{ij}_k \partial_{y_iy_j}.
\end{equation}
Hence by Lemma \ref{l-4.1}, for $\rho>0$,
there exist solutions $\tilde \Psi_k\in C^0(\overline{Q_\rho})\cap C^2 (Q_\rho \cup \p_0 Q_\rho)$ of
\begin{equation}\label{lk}
\left\{ {\begin{split}
&\tilde L_k \tilde\Psi_k =0 \,\,\text{in}\,\,Q_\rho,  \\
&\tilde\Psi_k= \varphi_k \,\,\text{on}\,\,\p' Q_\rho, \\
&\p_n \tilde\Psi_k=0 \,\,\text{on}\,\,  \p_0 Q_\rho
\end{split}} \right.
\end{equation}
and $\|\tilde \Psi_k\|_{L^\infty(Q_{\rho})}$ is uniformly bounded.
By Lemma \ref{l-4.2}, for any given  $\rho'\in(0,\rho)$,
$\|\tilde \Psi_k\|_{ C^{\alpha} (\overline{Q_{\rho'}}) }$ is independent of $k$.
According to the interior regularity theory of linear parabolic equations with smooth coefficients \cite{L1996}, we have
\begin{equation*}
\|\tilde \Psi_k\|_{ C^{m+\alpha}(\overline{Q_{\rho'}} \cap \{y_n\ge\rho''\} ) }\le C_{m,\rho''}, \ \   m=2,4,6,\cdots, \ \   \rho''\in(0,\rho').
\end{equation*}
Hence, by taking a subsequence,
we may assume $\tilde \Psi_k \to \tilde \Psi $ a.e. in $\overline{Q_\rho}$
with $\tilde \Psi$ solving \eqref{psi-yn}.
Moreover, $\tilde \Psi \in C^{\alpha} (\overline{Q_{\rho'}} )\cap C^\infty(Q_{\rho}) \cap L^\infty(Q_{\rho})$.

Next, we show $\tilde \Psi=\Psi$.
Consider
\begin{equation*}
h_\eps =\tilde \Psi - \Psi + \vep y_n^{-\beta}, \ \    \text{on} \ \    \overline{Q_\rho}, \ \   \beta \in(0, b+1], \ \   \vep\in(0,1).
\end{equation*}
By the boundedness of $\tilde \Psi$ and $\Psi$, it follows that
\begin{equation*}
\lim_{y_n\rightarrow 0^+} h_\eps \rightarrow +\infty.
\end{equation*}
And $h_\eps \ge 0$ on $\partial_p Q_\rho \cap \{y_n>0\}$ follows easily by the boundary condition in \eqref{lk}.
A direct computation yields that
 \begin{equation*}
  \tilde L (h_\eps )=\vep\beta(\beta-b-1)y_n^{-\beta-2} a^{nn}\le 0, \ \   \text{in} \ \   Q_\rho.
 \end{equation*}
Then by the maximum principle, we get
$h_\eps  \ge 0$ on $ Q_\rho$.
Hence, taking $\vep \to 0$, we have $\tilde \Psi \ge  \Psi$ on $\overline{Q_\rho}$.
Similarly, we have $\tilde \Psi \le  \Psi$.
Therefore, we have $\tilde \Psi=\Psi$,
which yields $\frac{\psi^*_{y_n}}{y_n}\in C^{\alpha}(\overline{\mathbb R^{n,+}}\times(-\infty,0])$
 and estimate \eqref{HE}.
\end{proof}

\begin{lemma}\label{lemconti}
Assume the assumptions of Theorem \ref{thmbern}.
Then  $\psi\in C^{2+\alpha} (\overline{\mathbb R^{n,+}}\times(-\infty,0])$ for some $\alpha\in (0, 1)$.
\end{lemma}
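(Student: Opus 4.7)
The plan is to reduce Lemma \ref{lemconti} to linear parabolic regularity theory via the partial Legendre transform \eqref{plt}, using the Hölder estimate from Lemma \ref{lemholder} as the starting point of a bootstrap. Since equation \eqref{blow1} is uniformly parabolic and $\psi\in C^{1,1}$, the transform $\psi^*$ solves equation \eqref{002} with $D^2_{y'}\psi^*$ uniformly positive definite and bounded, $\psi^*_t$ bounded, and the symmetry condition $\psi_{x_n}(x',0,t)=0$ transferring to $\psi^*_{y_n}(y',0,t)=0$. Lemma \ref{lemholder} then applies and yields $\Psi := \psi^*_{y_n}/y_n \in C^\alpha$ up to the boundary; in particular the combination $\psi^*_{y_ny_n} + b\Psi$ appearing in equation \eqref{002} is a bounded, strictly positive, Hölder-continuous quantity globally, and the trace $\psi^*_{y_ny_n}|_{y_n=0} = \Psi|_{y_n=0}$ is already Hölder continuous.

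Next I would differentiate equation \eqref{002} with respect to each tangential variable $y_k$ ($1\le k\le n-1$) and with respect to $t$. The resulting linear equations for $\psi^*_{y_k}$ and $\psi^*_t$ all share a Bessel-type parabolic principal part of the form
\begin{equation*}
-\partial_t + a^{nn}\Bigl(\partial_{y_ny_n} + \tfrac{b}{y_n}\partial_{y_n}\Bigr) + {\textstyle\sum_{i,j=1}^{n-1}} a^{ij}\partial_{y_iy_j},
\end{equation*}
whose coefficients are already Hölder continuous up to $\{y_n=0\}$ by step one; this is precisely the class of operators treated in Lemmas \ref{l-4.1}--\ref{l-4.2}, and the Neumann-type boundary condition $\partial_{y_n}(\psi^*_{y_k})=\partial_{y_n}(\psi^*_t)=0$ at $y_n=0$ follows by tangential differentiation of $\psi^*_{y_n}(y',0,t)=0$. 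Applying the weighted Schauder theory for such singular parabolic operators (as developed in \cite{L2016, DL2003, DP2020}) upgrades $\psi^*_{y_k}$ and $\psi^*_t$ from $C^{0,1}$ to $C^{1+\alpha}$ up to the boundary. To treat the remaining normal second derivative I would invoke equation \eqref{psi-yn} satisfied by $\Psi$, whose principal part is the same type of Bessel operator with parameter $b+2>1$; since its coefficients are now known to be Hölder continuous, the weighted Schauder estimates give $\Psi\in C^{1+\alpha}$, which translates to Hölder regularity for $\psi^*_{y_ny_n}$ up to $\{y_n=0\}$ via the identity $\psi^*_{y_ny_n} = (y_n\Psi)_{y_n} = \Psi + y_n \Psi_{y_n}$.

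Combining these tangential, temporal, and normal estimates yields $\psi^* \in C^{2+\alpha}(\overline{\mathbb R^{n,+}}\times(-\infty,0])$, and the inverse partial Legendre transform (smooth in $y'$ because $D^2_{y'}\psi^*$ is uniformly positive definite, and trivial in the $y_n$ slot) transports the regularity back to $\psi$ in the same space. The principal obstacle is precisely the singular coefficient $1/y_n$: Lemma \ref{lemholder} disposes of it at the Hölder level, but closing the bootstrap to $C^{2+\alpha}$ relies crucially on the weighted Schauder theory for Bessel-type parabolic operators up to the degenerate boundary, which is the deepest input into the argument; everything else is a fairly standard differentiation-and-Schauder iteration applied to the Legendre-transformed equation.
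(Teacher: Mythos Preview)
Your bootstrap has a genuine gap at the step where you claim the linearized coefficients are H\"older. After Lemma \ref{lemholder} you know only that $\Psi=\psi^*_{y_n}/y_n\in C^\alpha$; you then assert that ``the combination $\psi^*_{y_ny_n}+b\Psi$ \dots\ is H\"older-continuous'' and that the coefficients $a^{nn},a^{ij}$ of the linearized equation for $\psi^*_{y_k}$, $\psi^*_t$ are H\"older. But from \eqref{002} one has $-\psi^*_{y_ny_n}-b\Psi = \det D^2_{y'}\psi^*\cdot(\psi^*_t)^{-1/p}$, and the coefficients $a^{nn},a^{ij}$ of the linearization are built from $\psi^*_t$, $D^2_{y'}\psi^*$ and this same combination. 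At this point $\psi^*_t$ and $D^2_{y'}\psi^*$ are only $L^\infty$ (coming from $\psi\in C^{1,1}$), not H\"older and not even known to be continuous, so neither the weighted Schauder estimates nor the $W^{2,p}$ estimate of \cite{DP2020} (which requires continuous coefficients) is available. The bootstrap therefore does not close as written: you need H\"older second derivatives to get H\"older coefficients, which is exactly what you are trying to prove.

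The paper avoids this circularity by a different and much shorter route. After Lemma \ref{lemholder} it uses the identity $\psi_{x_n}/x_n=-\psi^*_{y_n}/y_n$ together with the Lipschitz change of variables $y'=D_{x'}\psi$ to transfer the H\"older regularity of $\Psi$ back to $\psi_{x_n}/x_n$ on the $x$-side. It then performs an \emph{even reflection} of $\psi$ across $\{x_n=0\}$, which removes the boundary entirely. On the doubled domain, equation \eqref{blow1} is rewritten as $\mathcal F(\psi_t,\psi_{x_n}/x_n,D^2_x\psi)=0$ with the singular term $\psi_{x_n}/x_n$ treated as a \emph{known} $C^\alpha$ coefficient; what remains is a uniformly parabolic, fully nonlinear operator concave in $D^2_x\psi$, and interior Evans--Krylov/Wang theory \cite{W1992-1,W1992-2} gives $\psi\in C^{2+\alpha}$ in one stroke. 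No boundary Schauder theory for singular operators is needed.
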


\begin{proof}
Let $\psi^*$ be the partial Legendre transform of $\psi$.
Then $\psi^*$ satisfies equation \eqref{002} and  the assumptions of Lemma \ref{lemholder}.
Hence by Lemma \ref{lemholder}, $\frac{\psi^*_{y_n}}{y_n}\in C^{\alpha}(\overline{\mathbb R^{n,+}}\times(-\infty,0])$.
Recall that
$\frac{\psi_{x_n}(x,t)}{x_n}=-\frac{\psi^*_{y_n}(y,t)}{y_n}$.
We therefore have
\begin{equation*}
\begin{split}
\Big|\frac{\psi_{x_n}(x,t)}{x_n}-\frac{\psi_{x_n}(\tilde x,\tilde t)}{\tilde x_n}\Big|
        =\Big|\frac{\psi^*_{y_n}(y,t)}{y_n}-\frac{\psi^*_{y_n}(\tilde y,\tilde t)}{\tilde y_n}\Big|
\le C\left(|y-\tilde y|^\alpha+|t-\tilde t|^{\frac \alpha2}\right).
\end{split}
\end{equation*}
By the partial Legendre transform \eqref{plt}, $y_n=x_n,$ $y'= D_{x'}\psi$.
It follows that
\begin{equation*}
|y'-\tilde y'|=| D_{x'} \psi(x,t)- D_{x'}\psi(\tilde x,t)|\le \|D^2\psi\|_{L^\infty(\mathbb R^{n,+}\times(-\infty,0])} |x-\tilde x|.
\end{equation*}
Hence $\frac{\psi_{x_n}}{x_n}\in C^{\alpha} (\overline{\mathbb R^{n,+}}\times(-\infty,0])$ and we have the estimate
\begin{equation}\label{dptc}
\Big\|\frac{\psi_{x_n}}{x_n}\Big\|_{C^{\alpha}(\mathbb R^{n-1}\times[0,1]\times(-\infty,0])}\le C,
\end{equation}
where $C$ depends only on
$n, b$, $\|\psi_t\|_{L^\infty(\mathbb R^{n,+}\times(-\infty,0])}$ and $\|D_x^2 \psi\|_{L^\infty(\mathbb R^{n,+}\times(-\infty,0])}$.

Then, we make an even extension of $\psi(x,t)$ with respect to the variable $x_n$ and
still denote it by $\psi(x,t)$.
We write equation \eqref{blow1} in the form
\begin{equation*}
\mathcal F(\psi_t, \frac{\psi_{x_n}}{x_n},D^2_x\psi) = -\psi_t- \frac{1}{(\det W_{\psi})^p}=0,
\end{equation*}
where $W_{\psi}=W_{\psi}(\frac{\psi_{x_n}}{x_n},D^2_x\psi)$ denotes the matrix in equation \eqref{blow1}.
Here, we can regard $\frac{\psi_{x_n}}{x_n}$ as a known function, which is H\"older continuous.
By our assumptions,  $\mathcal F$ is  fully nonlinear, uniformly parabolic
and the coefficients in $\mathcal F$ are $C^{\alpha}$ smooth.
Since $\mathcal F$ is concave with respect to $D^2\psi$, then
by the $C^{2,\alpha}$ regularity theory of fully nonlinear parabolic equations \cite{W1992-1,W1992-2},
we conclude that $\psi\in C^{2+\alpha} (\R^n\times (-\infty,0])$.
\end{proof}

\noindent
{\bf Proof of Theorem \ref{thmbern}:}\
Let $\psi$ be the solution in Theorem \ref{thmbern}.   Let
\begin{equation}
\psi^{m}(x,t)=\frac{\psi(mx,m^2t)}{m^2}, \ \   m=1,2,\cdots
\end{equation}
be a blow-down sequence of $\psi$.
Since \eqref{blow1} is uniformly parabolic for $\psi$, it is also uniformly parabolic for $\psi^m$ with the same bounded constants.
This implies that there is a constant $C>0$,
independent of $m$, such that
\begin{equation*} \label{cmc}
C^{-1} I_{n\times n}\le W_{\psi^m} \le C I_{n\times n},
\end{equation*}
where $I_{n\times n}$ is the unit matrix and $W_{\psi^m}$ denotes the matrix in equation \eqref{blow1}.
Note that $\psi^{m}$ satisfies the conditions in Lemma \ref{lemconti}, uniformly in $m$.
Thus,  by Lemma \ref{lemconti} we have
\begin{equation}
\begin{split}
&|D_x^2 \psi(x,t)-D_x^2 \psi(0,0)|+|D_t \psi(x,t)-D_t\psi(0,0)|\\
=&\lim_{m\rightarrow +\infty}\left(\big|D_x^2 \psi^m\big( {\Small\text{$\frac xm$}},
   {\Small\text{$\frac t{m^2}$}} \big)-D_x^2 \psi^m(0,0)\big|+\big|D_t \psi^m\big( {\Small\text{$\frac xm$}}, {\Small\text{$\frac t{m^2}$}} \big)-D_t \psi^m(0,0)\big|\right)=0
\end{split}
\end{equation}
for any given point $(x,t)\in \R^{n,+}\times (-\infty,0]$.
Therefore, $\psi$ is the form
\begin{equation}\label{qp2}
\psi(x,t)=\frac 12 {\Small\text{$ \sum_{i,j=1}^{n} $}} c_{ij} x_ix_j - c_0 t.
\end{equation}
By the assumption $\psi_{x_n}(x',0,t)=0\ \forall\ x'\in\R^{n-1}$ and $t\in(-\infty,0]$,
we have $c_{in}=0$, for $i=1,\cdots,n-1$, in the polynomial \eqref{qp2}.
\qed

In the elliptic case, the Bernstein theorem was obtained in  \cite{HTW},
where a counter-example was also given when the condition in Theorem \ref{thmbern}
is not satisfied.

We also point out that the Bernstein theorem for parabolic Monge-Amp\`ere equation
$$-\psi_t\det D^2\psi =1\ \ \text{in}\  \R^n \times (-\infty,0]$$
was obtained by Guti\'errez and Huang \cite{GH1998},
under the assumption $C_1\le -\psi_t \le C_2$.

\vspace{3mm}
\section{Estimates for the modulus of continuity of $\zeta_t$ and $D^2\zeta$}\label{s5}
\vspace{2mm}

Let $u(\cdot, t)$ be the Legendre transform of $v(\cdot, t)$.
By assumption \eqref{rho-0}, we have $u(0,t)=0$ and  $u(x,t)>\rho_0 |x|\ \forall\ x\ne 0$, $t\in[0,T]$.
Let $\phi(x,t)$ be the tangential cone of $u$ at $(0, t)$ and $w=u-\phi$.

To study the regularity of $u$,
we introduce the spherical coordinates $(\theta, r)$,
where $\theta$ is an orthonormal frame on $\mathbb S^{n-1}$.
Let
\begin{equation}\label{zeta0}
\zeta=\frac {u(\theta,r,t)}r,
\end{equation}
where $r=|x|\in(0,1]$.

As in \cite{HTW} we can verify

\begin{lemma}\label{L4.1}
The function  $\zeta(\theta,r,t)$ satisfies the parabolic Monge-Amp\`ere type equation
\begin{equation}\label{po1r}
-\zeta_t\det \begin{pmatrix}
  \displaystyle\frac{r^{\frac1p} \zeta_{rr}}{r^{n-2}}+\frac{2r^{\frac1p} \zeta_r}{r^{n-1}} & \displaystyle\frac{r^{\frac1{2p}} \zeta_{r\theta_1}}{r^{\frac{n-2}{2}}} &\cdots&
                    \displaystyle\frac{r^{\frac{1}{2p}} \zeta_{r \theta_{n-1}}}{r^{\frac{n-2}{2}}}\\
  \displaystyle\frac{r^{\frac1{2p}} \zeta_{r\theta_1}}{r^{\frac{n-2}{2}}}& \zeta_{\theta_1\theta_1}+\zeta+r\zeta_r & \cdots&
                         \zeta_{\theta_1\theta_{n-1}}\\
        \cdots &\cdots &\cdots &\cdots  \\
  \displaystyle\frac{r^{\frac1{2p}} \zeta_{r\theta_{n-1}}}{r^{\frac{n-2}{2}}} &\zeta_{\theta_1\theta_{n-1}} & \cdots&
                                   \zeta_{\theta_{n-1}\theta_{n-1}}+\zeta+r\zeta_r
\end{pmatrix}^p =F(r),
\end{equation}
where $F(r)=(1+r^2)^{-\frac{(n+2)p-1}{2}}$.
\end{lemma}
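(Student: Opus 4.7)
This is an identity, so my plan is simply to compute the Hessian of $u$ in spherical coordinates carefully, substitute $u=r\zeta$, plug into the Monge--Amp\`ere equation \eqref{3.1} away from the origin (where the Dirac mass vanishes), and then rescale the resulting matrix to match the layout of \eqref{po1r}. No estimates are involved, only bookkeeping.

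First I would fix a local orthonormal frame $(\partial_r,\frac{1}{r}\partial_{\theta_1},\ldots,\frac{1}{r}\partial_{\theta_{n-1}})$ on $\mathbb R^n\setminus\{0\}$, so that the Euclidean Hessian $H$ of $u$ has entries
\begin{align*}
H_{rr}&=u_{rr}, \qquad H_{r\theta_i}=\tfrac{1}{r}u_{r\theta_i}-\tfrac{1}{r^2}u_{\theta_i},\\
H_{\theta_i\theta_j}&=\tfrac{1}{r^2}u_{\theta_i\theta_j}+\tfrac{1}{r}u_r\,\delta_{ij},
\end{align*}
at the point where the spherical coordinates are chosen orthonormal. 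Substituting $u=r\zeta$ gives $u_r=\zeta+r\zeta_r$, $u_{\theta_i}=r\zeta_{\theta_i}$, $u_{rr}=2\zeta_r+r\zeta_{rr}$, $u_{r\theta_i}=\zeta_{\theta_i}+r\zeta_{r\theta_i}$, $u_{\theta_i\theta_j}=r\zeta_{\theta_i\theta_j}$, which collapses nicely so that $H_{rr}=2\zeta_r+r\zeta_{rr}$, $H_{r\theta_i}=\zeta_{r\theta_i}$, and $r H_{\theta_i\theta_j}=\zeta_{\theta_i\theta_j}+(\zeta+r\zeta_r)\delta_{ij}$.

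Next I would compute $\det H$ by factoring $r^{-1/2}$ from each of the last $n-1$ rows and columns (equivalently, writing $H=D\widetilde H D$ with $D=\operatorname{diag}(1,r^{-1/2},\ldots,r^{-1/2})$). This yields
\begin{equation*}
\det H=r^{-(n-1)}\det\begin{pmatrix} r\zeta_{rr}+2\zeta_r & r^{1/2}\zeta_{r\theta_j} \\ r^{1/2}\zeta_{r\theta_i} & \zeta_{\theta_i\theta_j}+(\zeta+r\zeta_r)\delta_{ij}\end{pmatrix}.
\end{equation*}
Using $u_t=r\zeta_t$ and the form $-u_t(\det D^2u)^p=(1+r^2)^{-((n+2)p-1)/2}$ of equation \eqref{3.1} on $\{x\ne 0\}$, I obtain
\begin{equation*}
-\zeta_t\,\bigl[\det(\cdot)\bigr]^{p}=r^{(n-1)p-1}F(r).
\end{equation*}

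Finally, to absorb the factor $r^{(n-1)p-1}$ into the matrix, I would multiply the first row and first column by $\alpha=r^{(1/p-n+1)/2}$. This scales the $(1,1)$ entry by $\alpha^2=r^{1/p-(n-1)}$, turning $r\zeta_{rr}+2\zeta_r$ into $r^{1/p}\zeta_{rr}/r^{n-2}+2r^{1/p}\zeta_r/r^{n-1}$, and scales the cross entries by $\alpha r^{1/2}=r^{1/(2p)-(n-2)/2}$, producing precisely the matrix displayed in \eqref{po1r}. The determinant then acquires a factor $\alpha^2=r^{1/p-(n-1)}$, so raising to the $p$-th power cancels exactly against $r^{(n-1)p-1}$, leaving the right-hand side $F(r)$ as claimed.

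The only genuine pitfall is keeping the power of $r$ correct through two rescalings of the matrix (the $D\widetilde HD$ step and the final $\alpha$-scaling of row/column one); everything else is mechanical. Since the Dirac measure $c_t\delta_0$ in \eqref{3.1} is supported at the origin and \eqref{po1r} is written on $\{r>0\}$, it plays no role here.
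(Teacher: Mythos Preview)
Your computation is correct and complete; the rescalings are tracked accurately and the final matrix matches \eqref{po1r} entry by entry. The paper itself does not give a proof of this lemma, merely writing ``As in \cite{HTW} we can verify,'' so your direct derivation (expressing the Euclidean Hessian in an orthonormal spherical frame, substituting $u=r\zeta$, and performing two diagonal rescalings) is exactly the kind of verification the authors defer to their earlier elliptic paper, with the only addition here being the trivial observation $u_t=r\zeta_t$.
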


By Lemma \ref{L2.4}, the matrix in \eqref{po1r} is uniformly bounded.
We can make \eqref{po1r} uniformly parabolic.
Let
$$s=r^{\frac {\sigma_p} 2}.$$
Then
\begin{equation}\label{zes}
{\begin{split}
\zeta_s &=\frac{2}{\sigma_p}r^{1-\frac {\sigma_p}{2}}\zeta_r,\\
\zeta_{s\theta}&=\frac{2}{\sigma_p}r^{1-\frac {\sigma_p}{2}}\zeta_{r\theta},\\
\zeta_{ss} & =\frac{4}{\sigma_p^2 }r^{2-\sigma_p} \zeta_{rr} -\frac{2(\sigma_p-2)}{\sigma_p^2 }r^{1-\sigma_p}\zeta_r.
\end{split}}
\end{equation}
Hence by   \eqref{zes} and Lemma \ref{L2.4}, we have

\begin{corollary}\label{C5.1}
As a function of $\theta$, $s$ and $t$, $\zeta$ satisfies $\zeta_{s}(\theta,0,t)=0$ 
  $\forall~ \theta\in \mathbb S^{n-1}$ and $t\in[0,T]$, and $\zeta_t,D_{\theta,s}^2 \zeta \in L^\infty(\mathbb S^{n-1}\times  [0, 1]\times [0,T])$.
\end{corollary}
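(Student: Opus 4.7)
\smallskip
\noindent
\textbf{Proof proposal.} The plan is to read off all the claims directly from the growth estimates for $w=u-\phi$ in Lemma \ref{L2.4}, together with the change-of-variables formulas \eqref{zes} and the bound $|u_t|\approx r$ from \eqref{ut-b-3}. The key observation is that because $\phi(\cdot,t)$ is homogeneous of degree one in $x$, one has the decomposition
\[
 \zeta(\theta,r,t)=\frac{\phi(\theta,r,t)}{r}+\frac{w(\theta,r,t)}{r}=\phi(\theta,1,t)+\frac{w(\theta,r,t)}{r},
\]
so the term carrying all the $r$-dependence is $w/r$, and Lemma \ref{L2.4} provides essentially optimal pointwise bounds on $w$ and its derivatives.

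First I would verify $|\zeta_t|\lesssim 1$: since $\zeta_t=u_t/r$, this follows immediately from $|u_t|\lesssim r$. Next, to show $\zeta_s(\theta,0,t)=0$, compute
\[
 \zeta_r=\frac{w_r}{r}-\frac{w}{r^2},
\]
and use $|w|\lesssim r^{\sigma_p+1}$ and $|w_r|\lesssim r^{\sigma_p}$ from Lemma \ref{L2.4} to get $|\zeta_r|\lesssim r^{\sigma_p-1}$. Then \eqref{zes} gives
\[
 |\zeta_s|=\tfrac{2}{\sigma_p} r^{1-\sigma_p/2}|\zeta_r|\lesssim r^{\sigma_p/2}=s,
\]
so $\zeta_s$ extends continuously to $s=0$ with value $0$.

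For the $L^\infty$ bounds on the second derivatives, I handle the three blocks separately. For $\zeta_{\theta\theta}$, write $\zeta_{\theta\theta}=\phi_{\theta\theta}(\theta,1,t)+w_{\theta\theta}/r$; the first term is bounded by Corollary \ref{C3.2} (the angular second derivatives of $\phi$ at radius one are controlled by $|x||D^2\phi|\lesssim 1$), and the second by $|w_{\theta\theta}|\lesssim r$ from Lemma \ref{L2.4}. For $\zeta_{s\theta}$, use $\zeta_{s\theta}=\tfrac{2}{\sigma_p} r^{1-\sigma_p/2}\zeta_{r\theta}$ together with $\zeta_{r\theta}=w_{r\theta}/r-w_\theta/r^2$; Lemma \ref{L2.4} gives $|w_{r\theta}|\lesssim r^{\sigma_p/2}$, and then integrating in $r$ from $0$ (noting that $w(\theta,0,t)\equiv 0$ hence $w_\theta(\theta,0,t)=0$) yields $|w_\theta|\lesssim r^{\sigma_p/2+1}$, so $|\zeta_{r\theta}|\lesssim r^{\sigma_p/2-1}$ and $|\zeta_{s\theta}|\lesssim 1$. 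Finally, for $\zeta_{ss}$, combine the formula
\[
 \zeta_{ss}=\tfrac{4}{\sigma_p^2}r^{2-\sigma_p}\zeta_{rr}-\tfrac{2(\sigma_p-2)}{\sigma_p^2}r^{1-\sigma_p}\zeta_r
\]
with $\zeta_{rr}=w_{rr}/r-2w_r/r^2+2w/r^3$: the bounds from Lemma \ref{L2.4} make each of the three terms in $\zeta_{rr}$ of order $r^{\sigma_p-2}$, hence $|r^{2-\sigma_p}\zeta_{rr}|\lesssim 1$ and $|r^{1-\sigma_p}\zeta_r|\lesssim 1$.

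None of these steps is genuinely hard; the only thing to be careful about is matching exponents so that the weight $r^{1-\sigma_p/2}$ (respectively $r^{2-\sigma_p}$) precisely compensates the singular behaviour of $\zeta_r$ (respectively $\zeta_{rr}$) coming from the boundary $\{s=0\}$. Since the growth rates $\sigma_p+1,\sigma_p,\sigma_p-1$ appearing in Lemma \ref{L2.4} are exactly those forced by the scaling $s=r^{\sigma_p/2}$, the cancellations are automatic and the corollary is essentially a bookkeeping consequence of Section \ref{s3}.
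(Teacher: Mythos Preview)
Your proposal is correct and is exactly the approach the paper takes: the paper's own justification is simply ``by \eqref{zes} and Lemma \ref{L2.4}'', and you have spelled out the exponent-matching in full. The one step worth phrasing more carefully is $w_\theta\to 0$ as $r\to 0^+$ (needed before integrating $w_{r\theta}$ from $0$); this follows, for instance, by interpolating the bounds $|w|\lesssim r^{\sigma_p+1}$ and $|w_{\theta\theta}|\lesssim r$ on the sphere.
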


By  \eqref {zes}, equation \eqref{po1r} changes to
\begin{equation}\label{po2}
-\zeta_t\det \begin{pmatrix}
 \zeta_{ss}+\frac{2+\sigma_p}{\sigma_p}\frac{\zeta_s}{s}   &   \zeta_{s\theta_{1}}&\cdots& \zeta_{s\theta_{n-1}}\\[3pt]
 \zeta_{s\theta_1} & \zeta_{\theta_1\theta_1}+\zeta+\frac {\sigma_p}2 s\zeta_s & \cdots&\zeta_{\theta_1\theta_{n-1}}\\[3pt]
                    \cdots &\cdots &\cdots &\cdots  \\[3pt]
 \zeta_{s\theta_{n-1}} &\zeta_{\theta_1\theta_ {n-1}} & \cdots&\zeta_{\theta_{n-1}\theta_{n-1}}+\zeta+\frac {\sigma_p}2 s\zeta_s
\end{pmatrix}^p=\bar {F} (s),
\end{equation}
where $\bar {F} (s)= 4^p\sigma_p^{-2p} \big(1+s^{\frac4{\sigma_p}}\big)^{-\frac{(n+2)p-1}2}$.

\begin{lemma}\label{unif-ellip}
Equation \eqref{po2} is uniformly parabolic.
\end{lemma}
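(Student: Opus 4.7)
The plan is to verify uniform parabolicity of \eqref{po2} by establishing two-sided bounds on the time coefficient $-\zeta_t$ and on the matrix $M$ appearing inside $\det(\cdot)^p$. The first bound is immediate: since $\zeta(\theta,r,t)=u(\theta,r,t)/r$, the estimate $-u_t\approx r$ from \eqref{ut-b-3} gives $-\zeta_t\approx 1$ uniformly on $\mathbb{S}^{n-1}\times[0,1]\times[0,T]$.

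The key observation is to recognize $M$ as a congruence transformation of the Euclidean Hessian $D^2u$ expressed in a spherical orthonormal frame $\{e_r,e_1,\dots,e_{n-1}\}$ with $e_r=x/|x|$ and $e_i$ unit tangent vectors on the sphere. A direct calculation in spherical coordinates yields
\begin{equation*}
(D^2u)(e_r,e_r)=u_{rr},\quad (D^2u)(e_r,e_i)=\zeta_{r\theta_i},\quad (D^2u)(e_i,e_j)=\frac{u_{\theta_i\theta_j}+ru_r\,\delta_{ij}}{r^2}.
\end{equation*}
Combining the change-of-variable formulas \eqref{zes}, the identity $\zeta+r\zeta_r=u_r$, and the computation
\begin{equation*}
\zeta_{ss}+\frac{\sigma_p+2}{\sigma_p}\frac{\zeta_s}{s}=\frac{4}{\sigma_p^2}\,r^{1-\sigma_p}\,u_{rr},
\end{equation*}
I would then read off the congruence identity
\begin{equation*}
M=\mathcal{D}\,(D^2u)\,\mathcal{D},\qquad \mathcal{D}=\mathrm{diag}\!\left(\tfrac{2}{\sigma_p}\,r^{(1-\sigma_p)/2},\ r^{1/2},\dots,r^{1/2}\right).
\end{equation*}

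Once this identity is in hand, uniform parabolicity follows directly from the a priori estimates. Using $n-\sigma_p=1/p$ one computes $(\det\mathcal{D})^2=\frac{4}{\sigma_p^2}r^{1/p}$, and Corollary \ref{C3.3} gives $\det D^2u\approx r^{-1/p}$, whence $\det M\approx 1$ uniformly. (The same conclusion also follows directly from \eqref{po2}, since $-\zeta_t\approx 1$ and $\bar F(s)\approx 1$ for $s\in[0,1]$.) For the upper eigenvalue bound, every entry of $M$ is controlled via Corollary \ref{C5.1}; the only delicate term $\zeta_s/s$ is bounded by the mean value theorem using $\zeta_s(\theta,0,t)=0$ together with the $L^\infty$ bound on $\zeta_{ss}$. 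Since $D^2u$ is positive definite on $\{r>0\}$ and $\mathcal{D}$ has positive diagonal entries, $M$ is positive definite, and the lower bound then follows from $\lambda_{\min}(M)\ge(\det M)/\lambda_{\max}(M)^{n-1}\ge c$.

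The only nontrivial step is verifying the congruence identity; after that, uniform parabolicity is a clean consequence of the eigenvalue estimates \eqref{eiges}, the entrywise bounds from Corollary \ref{C5.1}, and the growth estimate \eqref{ut-b-3}. I do not expect any serious obstacle beyond careful bookkeeping in the change-of-variable computation.
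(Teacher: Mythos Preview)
Your proof is correct and follows essentially the same route as the paper, which simply cites Corollary~\ref{ut-sup}, Lemma~\ref{ut-sub} (giving $-\zeta_t\approx 1$) and Corollary~\ref{C5.1} (giving entrywise bounds on $M$, including $\zeta_s/s$), with $\det M\approx 1$ then read off directly from equation~\eqref{po2} since $\bar F\approx 1$. Your explicit congruence identity $M=\mathcal D\,(D^2u)\,\mathcal D$ is a useful extra that makes the positive definiteness of $M$ transparent and gives an independent derivation of $\det M\approx 1$ via Corollary~\ref{C3.3}, but the paper does not need it and relies only on the equation.
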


\begin{proof}
This follows directly from Corollary \ref{ut-sup}, Lemma \ref{ut-sub} and Corollary \ref{C5.1}.
\end{proof}

The main result of this section is the following theorem.

\begin{theorem}\label{thm2}
Let $\zeta(\theta,s,t)\in C^{1,1} (\mathbb S^{n-1}\times [0,1]\times [0,T])$
be a solution to \eqref{po2} with $\zeta_{s}(\theta,0,t)=0$ $\forall~\theta\in\mathbb S^{n-1}$ and $t\in [0,T]$.
Then $\zeta(\theta,s,t)\in C^2 (\mathbb S^{n-1}\times [0,1]\times (0,T])$.
\end{theorem}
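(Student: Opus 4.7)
The plan is to combine interior regularity away from $\{s=0\}$ with a blow-up contradiction argument at the boundary, the latter invoking the Bernstein-type classification of Theorem \ref{thmbern}. First, on any compact subset of $\mathbb{S}^{n-1}\times(0,1]\times(0,T]$ bounded away from $\{s=0\}$, equation \eqref{po2} is uniformly parabolic by Lemma \ref{unif-ellip} and concave when rewritten as $(-\zeta_t)(\det W)^p=\bar F(s)$; Krylov--Evans $C^{2,\alpha}$ theory for concave fully nonlinear parabolic equations together with the smoothness of $\bar F$ off $s=0$ yields interior $C^{2,\alpha}$ regularity on such subsets. The task thus reduces to showing $D^2_{\theta,s}\zeta$ and $\zeta_t$ extend continuously to $\{s=0\}$ at each $t_0\in(0,T]$.

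To obtain this boundary regularity, I argue by contradiction. Suppose at some $p_0=(\theta_0,0,t_0)$ with $t_0>0$ a second-order quantity fails to be continuous; then one can extract a sequence of scales $r_k\downarrow 0$, base points $q_k\to p_0$, and affine functions $L_k$ such that the rescaled functions
\beq
\psi^k(\theta,s,t)=\frac{\zeta(\theta_0+r_k\theta,\,r_k s,\,t_0+r_k^2 t)-L_k(\theta,s,t)}{r_k^2}
\eeq
have second-order oscillation bounded below uniformly in $k$. By Corollary \ref{C5.1} the family $\{\psi^k\}$ is bounded in $C^{1,1}_{loc}(\overline{\R^{n,+}}\times(-\infty,0])$, so up to a subsequence it converges in $C^1_{loc}$ to some $\psi\in C^{1,1}(\overline{\R^{n,+}}\times(-\infty,0])$.

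Next I would identify the limiting equation. Under the chosen rescaling the entries $\zeta_{ss}+\tfrac{2+\sigma_p}{\sigma_p}\zeta_s/s$, $\zeta_{s\theta_i}$ and $\zeta_{\theta_i\theta_j}$ are scale-invariant; the term $\tfrac{\sigma_p}{2}s\zeta_s$ in the tangential diagonal entries is $O(r_k)$ and vanishes in the limit; the remaining $\zeta$-contribution on the tangential diagonal converges to the constant $c=\zeta(p_0)>0$, which can be absorbed by replacing $\psi$ with $\psi+\tfrac{c}{2}|\theta|^2$; the coefficient $\bar F(r_ks)\to\bar F(0)>0$, which after a final dilation normalizes the right-hand side to $1$. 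The boundary condition $\zeta_s(\theta,0,t)=0$ yields $\psi_{x_n}(x',0,t)=0$, and the normalizations $\psi(0,0)=0$, $D_x\psi(0,0)=0$ are arranged by the choice of $L_k$. Uniform parabolicity of the limiting equation (in the sense required by Theorem \ref{thmbern}) follows from the lower bound $-u_t\gtrsim|x|$ (Lemma \ref{ut-sub}) together with the eigenvalue estimates of Corollary \ref{C3.3}, both translated through \eqref{zes} to the $(\theta,s,t)$ variables. Thus the modified limit $\psi$ satisfies \eqref{blow1} with $b=\tfrac{2+\sigma_p}{\sigma_p}>1$, and Theorem \ref{thmbern} forces $\psi$ to be a quadratic polynomial of the form \eqref{qp1}. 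This contradicts the lower bound on the second-order oscillation and completes the proof.

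The main obstacle I expect is the careful handling of the singular term $\zeta_s/s$ under rescaling: one must pass it to $\psi_{x_n}/x_n$ in the limit, using the boundary condition $\zeta_s|_{s=0}=0$ to interpret $\zeta_s/s$ as the mean of $\zeta_{ss}$ along the $s$-segment (hence $L^\infty$-bounded) and to control its modulus of continuity uniformly under blow-up. A secondary but related difficulty is ensuring uniform parabolicity of the blown-up equation, namely that the matrix $W$ has eigenvalues uniformly bounded above and below after rescaling; this rests on the a priori estimates of Sections \ref{s2}--\ref{s3} and must be tracked carefully through the coordinate change $s=r^{\sigma_p/2}$. Finally, one must verify that the convergence $\psi^k\to\psi$ is strong enough to pass the fully nonlinear equation to the limit, for which the concavity of the equation and the interior $C^{2,\alpha}$ bounds from Step~1 applied in the rescaled domain $\{s>0\}$ provide the needed compactness.
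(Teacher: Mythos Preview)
Your blow-up strategy is correct in spirit and in fact matches the paper's proof of the continuity of $\zeta_{s\theta}$ (Lemma~\ref{lemconti1}): there the Bernstein limit \eqref{qp1} forces the mixed coefficients $c_{in}$ to vanish, so \emph{every} blow-up sequence yields $\bar\zeta_{\tau\varphi}(0',1,0)=0$, and interior $C^{2,\alpha}$ convergence then gives $\zeta_{s\theta}(\theta^k,s^k,t^k)\to 0$ regardless of the sequence. That part of your argument is complete.

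The gap is in the remaining components $\zeta_{ss}$, $\zeta_t$, $\zeta_{\theta\theta}$. Theorem~\ref{thmbern} only says the blow-up limit is \emph{some} quadratic; it does not determine $c_{nn}$, $c_0$, or $(c_{ij})$. Two different sequences $(\theta^k,s^k,t^k)\to(\theta_0,0,t_0)$ can, a~priori, produce different quadratic limits and hence different values of $\lim_k\zeta_{ss}(\theta^k,s^k,t^k)$. Your phrase ``second-order oscillation bounded below uniformly in $k$'' hides this: if the two bad sequences live at incomparable $s$-scales (say $s_k^1/s_k^2\to 0$), no single rescaling keeps both in a compact subset of $\{s>0\}$ where interior $C^{2,\alpha}$ estimates apply, so the quadratic limit cannot see the oscillation. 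In short, the Bernstein step proves that subsequential limits exist, not that they coincide.

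The paper closes this gap with genuinely different tools for each component. For $\zeta_{ss}$ (Lemma~\ref{lemconti2}) and $\zeta_t$ (Lemma~\ref{lemconti-t}) it passes to the variable $\mathfrak r=s^2/4$, derives a linear degenerate equation \eqref{max3} (resp.\ \eqref{eqn-ct2}) for $V=\zeta_{\mathfrak r}$ (resp.\ $V=\zeta_t$), and runs an iterative barrier argument with the supersolutions $\sigma_{k,\gamma}^{\pm}$ in \eqref{sigmapm} to propagate the control obtained at one dyadic scale down to all smaller scales; this is what pins down the value $c_{nn}$ (resp.\ $c_0$) uniquely. For $\zeta_{\theta\theta}$ (Lemma~\ref{lemconti3}) the paper uses the concavity of \eqref{po2-v1} to obtain the one-sided inequality \eqref{linear-1}, which together with the same barrier iteration gives $\varlimsup\zeta_{\nu\nu}\le c_{\nu\nu}$; the reverse inequality then comes from plugging the already-established continuity of $\zeta_t$, $\zeta_{s\theta}$, $\zeta_{ss}$ back into the equation, cf.\ \eqref{add1}. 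None of these steps is supplied by the blow-up/Bernstein argument alone.
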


The proof of Theorem \ref{thm2} uses similar ideas as in \cite[Theorem 4.2]{HTW},
where the continuity of the second derivatives was obtained for the Monge-Amp\`ere obstacle problem.
The proof is divided into four lemmas.

\begin{lemma}\label{lemconti1}
Let $\zeta(\theta,s,t)$ be as in Theorem \ref{thm2}. Then $\zeta_{s\theta}\in C(\mathbb S^{n-1}\times[0,1]\times (0,T])$.
\end{lemma}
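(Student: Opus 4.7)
The plan is to split the proof into two cases depending on whether $s_0>0$ or $s_0=0$. For an interior point $(\theta_0,s_0,t_0)$ with $s_0>0$ and $t_0\in(0,T]$, Lemma~\ref{unif-ellip} asserts uniform parabolicity of \eqref{po2} there, the right-hand side $\bar F(s)$ is smooth at $s_0$, and after taking logarithms the operator is concave in $(D^2\zeta,-\zeta_t)$. Combined with the $C^{1,1}$ hypothesis, the interior $C^{2+\alpha}$ theory for concave fully nonlinear parabolic equations \cite{W1992-1,W1992-2} yields $\zeta\in C^{2+\alpha}$ near $(\theta_0,s_0,t_0)$, and in particular $\zeta_{s\theta_i}$ is continuous there.

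For a boundary point $(\theta_0,0,t_0)$ with $t_0\in(0,T]$, I would argue by contradiction via a parabolic blow-up. First note $\zeta_s(\theta,0,t)=0$ for all $(\theta,t)$: by \eqref{zes} combined with $|\partial_r^k w|\lesssim r^{n+1-k-1/p}$ from Lemma~\ref{L2.4}, one has $\zeta_r=w_r/r-w/r^2=O(r^{\sigma_p-1})$, so $\zeta_s=(2/\sigma_p)r^{1-\sigma_p/2}\zeta_r=O(s)$. Suppose now that $\zeta_{s\theta_i}$ does not tend to a limit at $(\theta_0,0,t_0)$. Then there is an interior sequence $p_k=(\theta_k,s_k,t_k)\to(\theta_0,0,t_0)$ with $|\zeta_{s\theta_i}(p_k)|\ge\eps_0>0$. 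Set $\rho_k=\max\{|\theta_k-\theta_0|,s_k,\sqrt{|t_k-t_0|}\}\to 0$ and, in local coordinates on $\mathbb S^{n-1}$ near $\theta_0$, define
\begin{equation*}
\zeta_{\rho_k}(\theta,s,t)=\frac{\zeta(\theta_0+\rho_k\theta,\,\rho_k s,\,t_0+\rho_k^2 t)-\zeta(\theta_0,0,t_0)-\rho_k D_\theta\zeta(\theta_0,0,t_0)\cdot\theta}{\rho_k^2}.
\end{equation*}
The $C^{1,1}$ hypothesis on $\zeta$ provides uniform $C^{1,1}$ bounds on $\zeta_{\rho_k}$, so along a subsequence $\zeta_{\rho_k}\to\psi$ in $C^{1,\alpha}_{\mathrm{loc}}(\R^{n-1}\times[0,\infty)\times(-\infty,0])$. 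A direct calculation of how each entry of the matrix in \eqref{po2} transforms under the rescaling shows that, with $\zeta_0:=\zeta(\theta_0,0,t_0)>0$, the function $\tilde\psi(\theta,s,t):=\psi(\theta,s,t)+(\zeta_0/2)|\theta|^2$ satisfies (after an overall constant rescaling to normalize the right-hand side) exactly equation \eqref{blow1} with $b=(2+\sigma_p)/\sigma_p>-1$, together with $\tilde\psi_s(\theta,0,t)=0$, $\tilde\psi(0,0,0)=0$, and $D_x\tilde\psi(0,0,0)=0$; uniform parabolicity of \eqref{po2} (Lemma~\ref{unif-ellip}) passes to the limit. Theorem~\ref{thmbern} then forces $\tilde\psi$ to be a quadratic polynomial of the form \eqref{qp1}, whose mixed $s$--$\theta_i$ coefficients vanish, so $\psi_{s\theta_i}\equiv 0$.

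The main obstacle is to upgrade the $C^{1,\alpha}_{\mathrm{loc}}$ convergence to $C^2_{\mathrm{loc}}$ convergence up to the boundary $\{s=0\}$, which is what allows one to evaluate second derivatives along the sequence of blow-up base points. I would obtain it by re-running the proof of Lemmas~\ref{lemholder} and~\ref{lemconti} uniformly in $k$: the sequence $\zeta_{\rho_k}$ (and its partial Legendre transform in the $\theta$-variables) satisfies equations that are uniform perturbations of \eqref{blow1}, with uniformly controlled $C^{1,1}$ bounds, uniform parabolicity, and uniform boundary condition $(\zeta_{\rho_k})_s(\theta,0,t)=0$, so every constant appearing in those lemmas is bounded independently of $k$. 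This yields a uniform $C^{2+\alpha}$ bound for $\zeta_{\rho_k}$ up to $\{s=0\}$ and hence $C^2_{\mathrm{loc}}$ convergence to $\psi$. Passing to a further subsequence so that $\bar p_k:=((\theta_k-\theta_0)/\rho_k,\,s_k/\rho_k,\,(t_k-t_0)/\rho_k^2)\to\bar p_\infty$ with $\max\{|\bar\theta_\infty|,\bar s_\infty,|\bar t_\infty|^{1/2}\}=1$, we obtain $\zeta_{s\theta_i}(p_k)=(\zeta_{\rho_k})_{s\theta_i}(\bar p_k)\to\psi_{s\theta_i}(\bar p_\infty)=0$, contradicting $|\zeta_{s\theta_i}(p_k)|\ge\eps_0$. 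Thus $\zeta_{s\theta_i}(p)\to 0$ as $p\to(\theta_0,0,t_0)$ from the interior, which combined with the interior case gives $\zeta_{s\theta}\in C(\mathbb S^{n-1}\times[0,1]\times(0,T])$.
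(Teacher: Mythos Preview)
Your overall strategy --- contradiction via parabolic blow-up followed by the Bernstein theorem --- is exactly the paper's, but your choice of rescaling parameter creates unnecessary work. You center the blow-up at $(\theta_0,0,t_0)$ and take $\rho_k=\max\{|\theta_k-\theta_0|,s_k,\sqrt{|t_k-t_0|}\}$, so the rescaled base point $\bar p_k$ may accumulate on $\{s=0\}$; this is precisely why you are forced to upgrade to $C^2$ convergence \emph{up to the boundary} and propose re-running Lemmas~\ref{lemholder} and~\ref{lemconti} uniformly for the perturbed equations. The paper instead centers at $(\theta^k,t^k)$ and chooses the scale $\lambda_k=s^k$, so that $(\theta^k,s^k,t^k)$ is sent to the \emph{fixed interior} point $(0',1,0)$ with $\tau=1>0$. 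At that point ordinary interior Krylov $C^{4+\alpha}$ estimates for the uniformly parabolic concave equation \eqref{Fk} already give $\tilde\zeta^k_{\tau\varphi}(0',1,0)\to\bar\zeta_{\tau\varphi}(0',1,0)=0$; the boundary $\{\tau=0\}$ enters only through the $C^{1,1}$ bound and the condition $\bar\zeta_\tau(\varphi,0,\sigma)=0$, which is all Theorem~\ref{thmbern} needs. Your boundary-estimate route should in principle go through, but it is substantially more laborious and entirely avoidable: the choice $\lambda_k=s^k$ is the one-line trick that makes the argument short.
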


\begin{proof}
Since $\zeta\in C^{\infty}(\mathbb S^{n-1}\times(0,1]\times (0,T])$, it is enough to show $\zeta_{s\theta}$ is continuous on $\{s=0\}$.
If the lemma is not true,
there exists  a sequence of points  $(\theta^k,s^k,t^k)\rightarrow (\theta^*, 0, t^*)$
such that
\begin{equation}\label{contra1}
\lim_{k\rightarrow +\infty}|\zeta_{s\theta}(\theta^k,s^k,t^k)|\ge \eps _0,\quad s^k>0
\end{equation}
for a constant $\eps _0>0$ and $t^*\in (0,T]$.
In the following, we choose $\theta^*=0$ as the origin of a local coordinates on the unit sphere.

We then make the coordinate transform
\begin{equation}\label{coortran1}
\begin{split}
\theta &=\lambda_k \varphi+\theta^k,\\
s &=\lambda_k \tau,\\
t&=\lambda_k^2\sigma + t^k 
\end{split}
\end{equation}
where $\lambda_k=s^k$.
Let
\begin{equation}\label{coortran2}
\tilde \zeta^k(\varphi,\tau,\sigma)=\frac{\zeta(\theta,s,t)- \zeta(\theta^k,0,t^k)- D_\theta \zeta(\theta^k,0,t^k)\cdot (\theta-\theta^k)}{\lambda_k^2}.
\end{equation}
Then by the estimates in Sections \ref{s2} and \ref{s3}, we have
\begin{equation}
C^{-1} \le -\tilde \zeta^k_\sigma(\varphi,\tau,\sigma) \le C 
\end{equation}
and
\begin{equation}
|\tilde \zeta^k(\varphi,\tau,\sigma)|\le C(\tau^2+|\varphi|^2 + |\sigma|)
\end{equation}
for a constant $C>0$ independent of $k$.
Moreover, by \eqref{po2}, $\tilde \zeta^k(\varphi,\tau,\sigma)$ satisfies the equation
\begin{equation}\label{po3}
-\tilde \zeta^k_\sigma \det \begin{pmatrix}
 \tilde \zeta^k_{\tau\tau}+ \frac{2+\sigma_p}{\sigma_p}\frac{\tilde \zeta^k_\tau}{\tau}
&\tilde \zeta^k_{\tau\varphi_{1}}&\cdots& \tilde \zeta^k_{\tau\varphi_{n-1}}\\[5pt]
\tilde  \zeta^k_{\tau\varphi_1}& \tilde \zeta^k_{\varphi_1\varphi_1}+h^k(\varphi,\tau)
                  & \cdots&\tilde \zeta^k_{\varphi_1\varphi_{n-1}}\\[3pt]
    \cdots &\cdots &\cdots &\cdots  \\[3pt]
\tilde  \zeta^k_{\tau\varphi_{n-1}} &\tilde \zeta^k_{\varphi_1\varphi_{n-1}} & \cdots&\tilde \zeta^k_{\varphi_{n-1}\varphi_{n-1}}+h^k(\varphi,\tau)
\end{pmatrix}^p=\bar {F}(\lambda_k\tau),
\end{equation}
where
$$h^k=\lambda_k^2(\tilde \zeta^k+\frac{\sigma_p}2 \tau\tilde \zeta^k_\tau)+\zeta(\theta^k,0,t^k) 
       +\lambda_k D_{\theta} \zeta(\theta^k,0,t^k)\cdot \varphi\to \zeta(0,0,t^*)\ \ \text{as}\  k\to\infty . $$

Denote $\widetilde W_{k}$ the matrix in equation \eqref{po3}.
We can write \eqref{po3} as a general fully nonlinear parabolic equation of the form
\begin{equation}\label{Fk}
\mathcal F_k(\varphi,\tau, \tilde \zeta^k,\tilde \zeta^k_\sigma, D\tilde \zeta^k, D^2\tilde \zeta^k)
=:  -\tilde \zeta^k_\sigma- \bar {F}(\lambda_k\tau) \frac{1}{(\det \widetilde W_{k})^p}=0.
\end{equation}
According to Lemma \ref{unif-ellip}, $\mathcal F_k$ is  uniformly parabolic.
Moreover, $\mathcal F_k$ is concave with respect to its variables $D^2 \tilde \zeta^k$
and is  smooth in all its arguments for $\tau>0$.
Hence by Krylov's regularity theory \cite{K1983},  we have
\begin{equation}\label{con-a2}
\|\tilde \zeta^{k}\|_{ C^{4+\alpha}(\overline{Q})}\le C_{Q} \ \   \forall\ Q\subset\subset \mathbb R^{n,+}\times(-\infty,0],
\end{equation}
where the constant $C_Q$ is independent of $k$.
By passing to a subsequence, we have
\begin{equation*}
\tilde \zeta^k(\varphi,\tau,\sigma)\rightarrow \bar \zeta(\varphi,\tau,\sigma)
   \ \   \text{in} \ \
   C^{4+\alpha}_{loc}(\mathbb R^{n,+}\times(-\infty,0])\cap C^{2-\eps}_{loc}(\overline{\mathbb R^{n,+}}\times(-\infty,0])
\end{equation*}
for a function
$\bar \zeta \in C^{4+\alpha}_{loc}(\mathbb R^{n,+}\times(-\infty,0])\cap C^{1,1}_{loc}(\overline{\mathbb R^{n,+}}\times(-\infty,0])$,
where $\vep\in(0,1)$ is any small constant.
Hence $\bar \zeta$ satisfies equation \eqref{blow1}
with $b=\frac{2+\sigma_p}{\sigma_p}$ and variables $x'=\varphi$, $x_n=\tau$, $t=\sigma$.
By Theorem \ref{thmbern},   $\bar \zeta$ is of the form
\begin{equation*} \label{bar-zeta}
\bar \zeta(\varphi,\tau,\sigma)=\frac{1}{2}c_{nn}\tau^2+\frac{1}{2} {\Small\text{$  \sum_{i,j=1}^{n-1} $}} c_{ij}\varphi_i\varphi_j-c_0\sigma.
\end{equation*}
Hence the mixed derivatives $\bar \zeta_{\tau\varphi}(0',1,\sigma)=0$ for all $\sigma\in(-\infty,0]$.
By the interior regularity for equation \eqref{Fk}  \cite{K1983}, it implies that
\begin{equation}\label{zetast}
\lim_{k\rightarrow +\infty}\zeta_{s\theta}(\theta^k,s^k,t^k)
   =\lim_{k\rightarrow +\infty}\tilde \zeta^k_{\tau\varphi}(0',1,0)=\bar \zeta_{\tau\varphi}(0',1,0)=0.
\end{equation}
We reach a contradiction with \eqref{contra1}.
The lemma is thus proved.
\end{proof}

\begin{lemma}\label{lemconti2}
Let $\zeta(\theta,s,t)$ be as in Theorem \ref{thm2}.
Then $\zeta_{ss}\in C(\mathbb S^{n-1}\times[0,1]\times (0,T])$.
\end{lemma}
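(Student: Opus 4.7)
The plan is to follow the contradiction-via-blow-up scheme of Lemma \ref{lemconti1}, adding one extra ingredient: a continuous extension of $\zeta_s/s$ up to $\{s=0\}$, which will force the blow-up coefficient $c_{nn}$ to be sequence-independent. I would obtain this extension by recycling the partial Legendre machinery of Section \ref{s4}.

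The first step is to apply the partial Legendre transform \eqref{plt} in the $\theta$ variables to $\zeta$, working in a local chart at $\theta^*$. A computation parallel to \cite{LS17,HTW} converts \eqref{po2} into an equation of the form \eqref{002} with $b = \tfrac{2+\sigma_p}{\sigma_p} > -1$, for $\zeta^*(\eta,s,t) = \theta\cdot\eta - \zeta(\theta,s,t)$. The hypotheses of Lemma \ref{lemholder} hold for $\zeta^*$: the $C^{1,1}$ bound on $\zeta$ supplies the required $L^\infty$ controls, the uniform parabolicity from Lemma \ref{unif-ellip} supplies the lower bound on $\det D^2_\eta\zeta^*$, and $\zeta^*_s(\cdot,0,\cdot) = -\zeta_s(\cdot,0,\cdot) = 0$ supplies the boundary condition. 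Applying Lemma \ref{lemholder} and pulling back through the transform (the variable $s$ is unchanged, so $\zeta^*_s/s = -\zeta_s/s$) yields $\zeta_s/s \in C^\alpha$ up to $\{s=0\}$ near $(\theta^*,t^*)$. In particular, the boundary value
\begin{equation*}
L(\theta,t) := \lim_{s \to 0^+}\frac{\zeta_s(\theta,s,t)}{s}
\end{equation*}
exists and is continuous.

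Then I would argue by contradiction: if $\zeta_{ss}$ does not extend continuously to $(\theta^*,0,t^*)$, pick a sequence $(\theta^k,s^k,t^k) \to (\theta^*,0,t^*)$ with $s^k > 0$ and
\begin{equation*}
\bigl|\zeta_{ss}(\theta^k,s^k,t^k) - L(\theta^*,t^*)\bigr| \ge \eps_0
\end{equation*}
for some fixed $\eps_0 > 0$. Apply the rescaling \eqref{coortran1}--\eqref{coortran2} with $\lambda_k = s^k$ exactly as in Lemma \ref{lemconti1}; uniform parabolicity, Krylov $C^{4+\alpha}$ estimates, and the Bernstein theorem (Theorem \ref{thmbern}) carry over verbatim to deliver a subsequential limit of the form
\begin{equation*}
\bar\zeta(\varphi,\tau,\sigma) = \tfrac12 c_{nn}\tau^2 + \tfrac12 \sum_{i,\ell=1}^{n-1}c_{i\ell}\varphi_i\varphi_\ell - c_0\sigma.
\end{equation*}
Since $(0',1,0)$ lies in the interior of $\R^{n,+}\times(-\infty,0]$, the $C^{4+\alpha}_{loc}$ convergence yields both $\tilde\zeta^k_{\tau\tau}(0',1,0) = \zeta_{ss}(\theta^k,s^k,t^k) \to c_{nn}$ and $\tilde\zeta^k_\tau(0',1,0) = \zeta_s(\theta^k,s^k,t^k)/s^k \to c_{nn}$. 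By the continuity of $\zeta_s/s$ established in the previous step, the second limit must also equal $L(\theta^*,t^*)$, hence $c_{nn} = L(\theta^*,t^*)$. Combining the two, $\zeta_{ss}(\theta^k,s^k,t^k) \to L(\theta^*,t^*)$, which contradicts the choice of the sequence.

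The main obstacle is the continuous extension of $\zeta_s/s$ to the boundary: unlike in Lemma \ref{lemconti1}, where the Bernstein theorem forces the off-diagonal blow-up coefficient $\bar\zeta_{\tau\varphi}$ to vanish universally, the diagonal coefficient $c_{nn}$ in Theorem \ref{thmbern} is a genuine free parameter. Identifying it with the boundary value of an auxiliary quotient $\zeta_s/s$ that one proves continuous by independent means (the partial Legendre trick and Lemma \ref{lemholder}) is the essential new step, and is what allows the one-sequence contradiction argument of Lemma \ref{lemconti1} to go through for $\zeta_{ss}$.
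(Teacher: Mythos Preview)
Your proposal has a genuine gap at the partial Legendre step. The transform \eqref{plt} applied in the $\theta$ variables requires $D^2_\theta\zeta$ to be uniformly positive definite, but this is not available: convexity of $u$ only gives $D^2_\theta\zeta + hI > 0$ with $h=\zeta+\tfrac{\sigma_p}{2}s\zeta_s$ (this is exactly the angular block in \eqref{po2}), and near $s=0$ one has $h\approx\zeta(\theta,0,t)>0$ with no lower bound on $D^2_\theta\zeta$ itself. For instance, if the tangent cone $\phi$ is round then $\zeta|_{s=0}$ is constant and $D^2_\theta\zeta(\theta,0,t)=0$, so the Legendre dual $\zeta^*$ is not even well-defined, let alone $C^{1,1}$. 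Even setting this aside, the angular block being $D^2_\theta\zeta + hI$ rather than $D^2_\theta\zeta$ means the transformed equation is \emph{not} of the form \eqref{002}: the extra zero-order factor $h$ (which after the transform becomes $\eta\cdot D_\eta\zeta^*-\zeta^*-\tfrac{\sigma_p}{2}s\zeta^*_s$) survives inside the determinant, and Lemma~\ref{lemholder} as stated does not apply. The computation is not ``parallel'' to \cite{LS17,HTW}; those references treat equations whose tangential block is a pure Hessian.

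The paper avoids the transform entirely. It differentiates \eqref{po2} in $\mathfrak r = s^2/4$ to obtain a linear singular equation $\mathcal L(\zeta_{\mathfrak r})=\text{bounded}$ directly (equation \eqref{max3}), with coefficients that are only bounded measurable. Since Lemma~\ref{l-4.2} is unavailable at this regularity, the paper instead runs an iterative comparison argument: starting from the blow-up information \eqref{aym-a2} on an annulus $\{\delta_{k,0}\lambda_k^2\le\mathfrak r\le\lambda_k^2/4\}$, explicit barriers $\sigma_{k,\gamma}^{\pm}$ involving powers $\mathfrak r^{-1/\sigma_p}$ push the estimate $|\zeta_{\mathfrak r}-2c_{nn}|\le\tfrac{1}{k}+\text{small}$ down through a geometric sequence of scales $\delta_{k,\gamma}\to 0$. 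This shows $\lim \zeta_s/s = c_{nn}$ independently of the blow-up sequence, and the continuity of $\zeta_{ss}$ follows as in your final paragraph.
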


\begin{proof}
It is enough to prove the continuity of $\zeta_{ss}$ on $\{s=0\}$,  i.e.
$\lim\limits_{(\theta,s,t)\rightarrow (0,0^+,t^*)}\zeta_{s s}(\theta,s,t)$ exists.  For simplicity, in the following, we only consider $t^*=T$.
By Lemma \ref{unif-ellip}, $ \zeta_{s s}(\theta, s,t)$ is uniformly bounded.
Hence there is a sub-sequence  $s^k\rightarrow 0$ such that
$\zeta_{ss}(0, s^k,T)$ is convergent.
We introduce the coordinates $(\varphi,\tau,\sigma)$ and function $\tilde \zeta^k$
as in \eqref{coortran1} and \eqref{coortran2}, with $\theta^k=0$, $t^k=T$.

By the proof of Lemma \ref{lemconti1}, we have
\begin{equation} \label{zeta12}
\tilde \zeta^k(\varphi,\tau,\sigma)\rightarrow \frac{1}{2}c_{nn}\tau^2+\frac{1}{2} {\Small\text{$  \sum_{i,j=1}^{n-1} $}} c_{ij}\varphi_i\varphi_j - c_0\sigma
\end{equation}
in $C^{4+\alpha}_{loc}(\mathbb R^{n,+}\times(-\infty,0])\cap C^{2-\eps}_{loc}(\overline{\mathbb R^{n,+}}\times(-\infty,0])$.
\begin{remark}
For general $t^*\in (0,T)$, it is enough to replace $\mathbb R^{n,+}\times(-\infty,0]$ by $\mathbb R^{n,+}\times(-\infty,1]$ in the proof.
\end{remark}
By the convergence \eqref{zeta12} and the interior regularity of equation \eqref{po3},
we can choose a subsequence, such that 
\begin{equation}\label{zeta-c00}
\Big|\frac{\tilde \zeta^k_\tau(\varphi,\tau,\sigma)}{\tau}-c_{nn}\Big| \le \frac 1{2k}  \ \   \text{in}\  Q_k
\end{equation}
where 
$$Q_k=:\big\{(\varphi,\tau,\sigma)\ | \  |\varphi|\le 1,\	\frac 1k\le \tau\le 1, -1\le \sigma\le 0\big\}.$$
Scaling back to $\zeta(\theta,s,t)$, we obtain
\begin{equation}\label{aym-aa2}
\Big|\frac{\zeta_s(\theta,s,t)}{s}-c_{nn}\Big| \le \frac 1{2k}  \ \  \ \text{in}\ \Sigma_k,
\end{equation}
where 
$$
\Sigma_k=:\big\{(\theta,s,t) \ |\ |\theta|\le \lambda_k,\	\frac {\lambda_k}k\le s\le \lambda_k, -\lambda_k^2\le t-T\le 0 \big\}.
$$
Let 
$${\mathfrak r}=\frac{s^2}{4} .$$
The above estimate implies that
\begin{equation}\label{aym-a2}
\left|\zeta_{\mathfrak r}-2c_{nn}\right| \le \frac 1k
 \ \   \text{in}\ \big\{(\theta,{\mathfrak r},t)\ |\ |\theta|\le \lambda_k,\	\frac {\lambda^2_k}{4k^2}\le {\mathfrak r}\le \frac{\lambda_k^2}{4}, -\lambda_k^2\le t-T\le 0 \big\},
\end{equation}
and equation \eqref{po2} changes to
\begin{equation}\label{max2}
-\zeta_t \det \begin{pmatrix}
   {\mathfrak r}\zeta_{{\mathfrak r}{\mathfrak r}}+\frac{1+\sigma_p}{\sigma_p}\zeta_{\mathfrak r}&    \zeta_{{\mathfrak r}\theta_{1}}&\cdots&    \zeta_{{\mathfrak r}\theta_{n-1}}\\[5pt]
   {\mathfrak r}\zeta_{{\mathfrak r}\theta_1}& \zeta_{\theta_1\theta_1}+\zeta+ \sigma_p{\mathfrak r}\zeta_{\mathfrak r} & \cdots&\zeta_{\theta_1\theta_{n-1}}\\[3pt]
      \cdots &\cdots &\cdots &\cdots   \\[3pt]
   {\mathfrak r}\zeta_{{\mathfrak r}\theta_{n-1}} &\zeta_{\theta_1\theta_{n-1}} & \cdots&\zeta_{\theta_{n-1}\theta_{n-1}}+\zeta+ \sigma_p{\mathfrak r}\zeta_{\mathfrak r}
\end{pmatrix}^p= \tilde {F},
\end{equation}
where
$$\tilde {F}({\mathfrak r})=\bar {F}(2{\mathfrak r}^{1/2})= 4^p \sigma_p^{-2p}\big(1+(4{\mathfrak r})^{\frac{2}{\sigma_p}}\big)^{- \frac{(n+2)p-1}2} .$$
The coefficient ${\mathfrak r}$ in the first column in \eqref{max2} is due to $\zeta_{s\theta} ={\mathfrak r}^{1/2} \zeta_{{\mathfrak r}\theta}$.

For convenience, we denote $W=\{W_{ij}\}_{i,j=1}^n$ by the matrix in  equation \eqref{max2} and rewrite the equation as
\begin{equation}\label{linear-l}
\log (-\zeta_t) +p\log (\det W) =\log  \tilde {F}.
\end{equation}
Differentiating in ${\mathfrak r}$, we have
$$\frac{\zeta_{t{\mathfrak r}}}{\zeta_t}+pW^{ij}\partial_{\mathfrak r} W_{ij}-\frac{\tilde {F}'}{\tilde {F}} = 0, $$
where $\{W^{ij}\}$ is the inverse of $\{W_{ij}\}$.
Denoting $V=\zeta_{\mathfrak r}$,
we get
\begin{equation}\label{max3}
\begin{split}
\mathcal L(V)=: & - V_t + \tilde a^{nn}\big({\mathfrak r}V_{{\mathfrak r}{\mathfrak r}}
    +  (2+{\small\text{$  \frac{1}{\sigma_p} $}}) V_{{\mathfrak r}} \big)+  {\Small\text{$ \sum_{i,j=1}^{n-1} $}}\tilde a^{ij}V_{\theta_i\theta_j}\\
 & \ \   +  {\Small\text{$ \sum_{i=1}^{n-1}$}} \tilde a^{ni} {\mathfrak r}^{1/2} V_{{\mathfrak r}\theta_i}
    +  {\Small\text{$ \sum_{i,j=1}^{n-1} $}} \zeta_{{\mathfrak r}\theta_i}b^{ij} V_{\theta_j}=\bar h \tilde{F}'+ \tilde h
\end{split}
\end{equation}
where $ \tilde a^{ij},b^{ij} $, $\bar h$ and $\tilde h$ are  continuous functions of the elements in the matrix in \eqref{max2},
namely ${\mathfrak r},\zeta, \zeta_t, \zeta_{\mathfrak r}, \zeta_{\theta_i\theta_j}, {\mathfrak r}\zeta_{{\mathfrak r}{\mathfrak r}},{\mathfrak r}^{1/2}\zeta_{{\mathfrak r}\theta_i},{\mathfrak r}\zeta_{{\mathfrak r}\theta_i}\zeta_{{\mathfrak r}\theta_j}$.
From the assumptions in Theorem \ref{thm2}, all the elements are uniformly bounded, namely
\begin{equation}\label{aym-a1}
|\zeta_t|+|\zeta_{\mathfrak r}|+|{\mathfrak r}\zeta_{{\mathfrak r}{\mathfrak r}}|+|{\mathfrak r}^{1/2}\zeta_{{\mathfrak r}\theta_i}|
        + |{\mathfrak r}\zeta_{{\mathfrak r}\theta_i} \zeta_{{\mathfrak r}\theta_j}|+| D^2_{\theta}\zeta|\le C
         \ \   \forall\ (\theta,{\mathfrak r},t)\in \mathbb S^{n-1}\times [0,   {\Small\text{$\frac 1{16}$}} ]\times[-1,0] .
\end{equation}
It implies that $\tilde a^{ij},b^{ij}, b^j,\bar h,\tilde h$ are uniformly bounded and $(\tilde a^{ij})_{i,j=1}^n$ is uniformly elliptic.
In fact, by rescaling and the interior estimates for uniformly parabolic equations, one can also get
\begin{equation}\label{asym-a-interior}
|D_{t,\mathfrak r}^k D_{\theta}^l \zeta|\le C_{k,l} {\mathfrak r}^{1-k-\frac l2},\quad 
    \forall\ (\theta,{\mathfrak r},t)\in \mathbb S^{n-1}\times [0,   {\Small\text{$\frac 1{16}$}} ]\times[-1,0],\quad k,l=1,2,\cdots .
\end{equation}

Let $\varphi$ be a cut-off function of $(\theta,t)$ such that $0\le \varphi\le 1$ and
\begin{equation*}
{\begin{split}
 \varphi(\theta,t) & \equiv 1\ \ \text{when} \  |\theta|\le  {\Small\text{$ \frac{1}{2} $}}~ \text{and}~ {\Small|t-T|\le  \text{$ \frac{1}{2} $}};\\
 \varphi & \equiv 0\ \ \text{when}\ |\theta|>1 ~ \text{and}~|t-T|>1.
 \end{split}}
\end{equation*}
Denote $\varphi_k(\theta,t)=\varphi\big(\frac{\theta}{\lambda_k}, \frac{t-T}{\lambda_k^2}+T\big)$ and  $\widehat V^k=\varphi_k V$.
Then   $\widehat V^k$ satisfies
\begin{equation*}
\mathcal L(\widehat V^k)=\varphi_k (\bar h {\tilde F}'+ \tilde h)-[\varphi_k,\mathcal L] V=: \widehat h^k,
\end{equation*}
where
$$ [\varphi_k,\mathcal L]  V=\varphi_k\mathcal L V -\mathcal L(\varphi_k V) . $$
By \eqref{aym-a1} and \eqref{asym-a-interior}, we have
\begin{equation}\label{hkc}
{\begin{split}
|\widehat h^k| &\le C(1+\lambda_k^{-2}+\lambda_k^{-1}{\mathfrak r}^{-\frac 12}+{\mathfrak r}^{\frac{2}{\sigma_p}-1})\\
& \le C(\lambda_k^{-1}{\mathfrak r}^{-\frac 12}+{\mathfrak r}^{\frac{2}{\sigma_p}-1})
          \ \ \  \text{when}\ \ 0<{\mathfrak r}<\lambda_k^2 ,
          \end{split}}
\end{equation}
where $C$ is a positive constant independent of $k$.

Denote   $\delta_{k, -1}  =\frac14$,  $ \delta_{k,0}  =\frac{1}{4k^2}$. Let
\begin{equation}\label{dekl}
\bigg\{ {\begin{split}
 & \delta_{k,\gamma+1} = (\delta_{k,\gamma})^{1+ \frac{\bar a}2\sigma_p},\\
 & \eps _{k,\gamma} =C_1 \delta_{k,\gamma}^{\bar a+\frac1{\sigma_p}}\lambda_k^{\frac 2{\sigma_p}},
  \end{split}} \ \ \  \ \   \gamma=0,1,2,\cdots ,
\end{equation}
where $C_1$ is a sufficiently large constant, $\bar a=:\min \{\frac 12, \frac 2{\sigma_p}\}$.

{\it Claim}: For any given $\gamma\ge 1$, we have
\begin{equation}\label{claim-conti1}
\begin{split}
|\widehat V^k-2\varphi_k c_{nn}|\le  {\small\text{$ \frac 1k $}} +C_1  {\Small\text{$ \sum_{l=0}^{\gamma-1} $}}    \delta_{k,l}^{\frac {\bar a}2}
   \end{split}
\end{equation}
when $\ |\theta|\le \lambda_k,\  \delta_{k,\gamma}\lambda_k^2\le \mathfrak r\le \delta_{k,\gamma-1}\lambda_k^2,\	-\lambda_k^2\le t-T\le 0$.

We  prove \eqref{claim-conti1} by induction.
By  \eqref{aym-a2},    \eqref{claim-conti1} holds  for  $\gamma=0$.
Assuming that \eqref{claim-conti1} holds for $\gamma$, we prove that it holds for $\gamma+1$.
We introduce the auxiliary functions
\begin{equation}\label{sigmapm}
\sigma_{k,\gamma}^{\pm}(\theta,\mathfrak r,t)
  =2\varphi_k c_{nn}\pm\Big(\frac 1k
    + C_1{\Small\text{$ \sum_{l=0}^{\gamma-1} $}} \delta_{k,l}^{\frac {\bar a}2}+\varepsilon_{k,\gamma} \mathfrak r^{-\frac 1{\sigma_p}}\Big).
\end{equation}
By our choice of $\varphi_k$ and \eqref{hkc},  we have
\begin{equation}
\begin{split}
\mathcal L(\widehat V^k-\sigma_{k,\gamma}^{+})
 & =  \widehat h_k-2c_{nn}\mathcal L(\varphi_k)+\tilde a^{nn}\frac{\varepsilon_{k,\gamma}}{\sigma_p}{\mathfrak r}^{-1-\frac 1{\sigma_p}}\\
 &\ge  -C(\lambda_k^{-1}\mathfrak r^{-\frac 12}+\mathfrak r^{\frac{2}{\sigma_p}-1})+\frac 1C \varepsilon_{k,\gamma}\mathfrak r^{-\frac{1}{\sigma_p}-1}
   \\
  & > 0 \ \ \ \ \
     \quad \text{when}\ 0< \mathfrak r\le \delta_{k,\gamma}\lambda_k^2
\end{split}
\end{equation}
if the constant $C_1$ in \eqref{dekl} is chosen large.
By our induction assumptions, we have
\begin{equation*}
\begin{split}
     &\widehat V^k-\sigma_{k,\gamma}^{+}\le 0\quad 
              \text{if}\  |\theta|\le \lambda_k,\  \mathfrak r=\delta_{k,\gamma}\lambda_k^2,\  -\lambda_k^2\le t-T\le 0,\\
     &\widehat V^k-\sigma_{k,\gamma}^{+} 
         =-\Big(\frac 1k+ C_1{\Small\text{$  \sum_{l=0}^{\gamma-1} $}}  \delta_{k,l}^{\frac {\bar a}2}+\varepsilon_{k,\gamma} {\mathfrak r}^{-\frac 1{\sigma_p}}\Big)<0
           \quad \text{if}\  |\theta|=\lambda_k,\	\text{or}\	t-T=-\lambda_k^2,\\
\ \hskip15pt \ & \lim\sup_{\mathfrak r\rightarrow 0^+}(\widehat V^k-\sigma_{k,\gamma}^{+})<0.
\end{split}
\end{equation*}
By the maximum principle, it follows that
\begin{equation*}
      \widehat V^k-\sigma_{k,\gamma}^+\le 0
  \quad \text{if}\  |\theta|\le \lambda_k,\    0< \mathfrak r\le \delta_{k,\gamma}\lambda_k^2,\  -\lambda_k^2\le t-T\le 0.
\end{equation*}
Similarly, we have
\begin{equation*}
    \widehat V^k-\sigma_{k,\gamma}^-\ge  0
   \quad \text{if}\ |\theta|\le \lambda_k,\    0< \mathfrak r\le \delta_{k,\gamma}\lambda_k^2,\  -\lambda_k^2\le t-T\le 0.
\end{equation*}
For $|\theta|\le \lambda_k,\ \delta_{k,\gamma+1}\lambda_k^2\le \mathfrak r\le \delta_{k,\gamma} \lambda_k^2$, $-\lambda_k^2\le t-T\le 0$, we obtain
\begin{equation*}
\begin{split}
|\widehat V^k-2c_{nn}\varphi_k|
     &\le \frac 1k+C_1 {\Small\text{$ \sum_{l=0}^{\gamma-1} $}} \delta_{k,l}^{\frac {\bar a}2}
                +\varepsilon_{k,\gamma}\mathfrak r^{-\frac 1{\sigma_p}}\\
     &\le  \frac 1k+C_1 {\Small\text{$ \sum_{l=0}^{\gamma-1} $}} \delta_{k,l}^{\frac {\bar a}2}
                +\varepsilon_{k,\gamma}\delta_{k,\gamma+1}^{-\frac 1{\sigma_p}}\lambda_k^{-\frac 2{\sigma_p}}\\
     &\le  \frac 1k+C_1{\Small\text{$ \sum_{l=0}^{\gamma} $}} \delta_{k,l}^{\frac {\bar a}2}.
\end{split}
\end{equation*}
The claim \eqref{claim-conti1} is proved.

For any  point $(\hat \theta,\hat {\mathfrak r},\hat t)$ near $(0, 0,T)$ with $\hat {\mathfrak r}>0$,
we can choose $k>0$ such that
$$(\hat \theta,\hat{\mathfrak r},\hat t)\in 
      \big\{(\theta,\mathfrak r,t):\ |\theta|\le\frac{\lambda_k}{2}, 0<\mathfrak r\le \frac{\lambda_k^2}{4},-\lambda_k^2\le t-T\le 0\big\} .$$
We then choose $\gamma\ge 0$ such that $\delta_{k,\gamma+1}\lambda_k^2\le \hat {\mathfrak r}\le \delta_{k,\gamma}\lambda_k^2$.
Hence we have
\begin{equation}\label{est1}
{\begin{split}
|\widehat V_k-2\varphi_k c_{nn}|
  & \le \frac 1k+C_1 {\Small\text{$ \sum_{l=0}^{\gamma} $}} \delta_{k,l}^{\frac {\bar a}2}\\
  &  \le  \frac 1k+C_1 {\Small\text{$ \sum_{l=0}^{\infty} $}} \Big(\frac 1{4k^2}\Big)^{{(1+\frac{\bar a \sigma_p}{2})^l}\cdot \frac{\bar a}{2}}   \\
  & \le \frac {C_1}{k^{\bar a}}\ \ \ \text{at}\ (\hat \theta,\hat{\mathfrak r},\hat t).
  \end{split}}
\end{equation}

Because $(\hat \theta,\mathfrak r,\hat t)$ is an arbitrary point near $(0, 0,T)$ with $\hat {\mathfrak r}>0$. Hence from \eqref{est1}
we conclude that (recall that $V=\zeta_{\mathfrak r}=2\frac{\zeta_s(\theta, s,t)}{s}$)
\begin{equation}\label{zss}
\lim_{\theta\to 0,  s\to 0^+,t\rightarrow T^-}\frac{\zeta_s(\theta, s,t)}{s}=\frac 12\lim_{\theta\to 0,  s\to 0^+,t\rightarrow T^-}V(\theta, s,t)=c_{nn}.
\end{equation}

The convergence \eqref{zss} implies that the constant $c_{nn}$ in the blow-up limit \eqref{zeta12}
is independent of the choice of the blow-up sequence.
Hence by the blow-up argument in the proof of Lemma \ref{lemconti1}, we infer that
\begin{equation}\label{ss00}
\lim_{\theta\to 0,  s\to 0^+,t\rightarrow T^-}\zeta_{ss}(\theta, s,t)=c_{nn}.
\end{equation}
By the convergence \eqref{ss00},
we can define $\zeta_{ss}$ on $\mathbb S^{n-1}\times\{s=0\}\times \{t=T\}$
as the limit $\lim_{ s\to 0^+}\zeta_{ss}(\theta, s,T)$.
The above proof also implies that $\zeta_{ss}$ is continuous on $\{s=0\}$.
For if not, let us assume that $\zeta_{ss}$ is dis-continuous at $(\theta, s,t)=(0,0,T)$.
Then there exist two sequences $(\theta^k_1, s^k_1,t^k_1)\to (0,0,T)$ and $(\theta^k_2, s^k_2,t^k_2)\to (0,0,T)$
such that $\zeta_{ss}(\theta^k_1, s^k_1,t^k_1)$ and $\zeta_{ss}(\theta^k_2, s^k_2,t^k_2)$
converge to different limits,
which is in contradiction with \eqref{ss00}.
This completes the proof.
\end{proof}

By a similar argument, we have

\begin{lemma}\label{lemconti-t}
Let $\zeta(\theta,s,t)$ be as in Theorem \ref{thm2}. Then $\zeta_{t}\in C(\mathbb S^{n-1}\times[0,1]\times (0,T])$.
\end{lemma}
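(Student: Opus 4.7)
The plan is to mirror the proof of Lemma \ref{lemconti2}, replacing the auxiliary function $V=\zeta_{\mathfrak r}$ by $U=\zeta_t$. By interior regularity for \eqref{po2}, $\zeta$ is smooth in $\{s>0\}$, so only the continuity of $\zeta_t$ at points of $\{s=0\}$ is at stake. Fix $(\theta^*,0,t^*)$ with $t^*\in(0,T]$, and assume without loss of generality that $\theta^*=0$ and $t^*=T$.

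\emph{Step 1 (candidate limit from blow-up).} Given any sequence $(\theta^k,s^k,t^k)\to(0,0,T)$ with $s^k>0$, set $\lambda_k=s^k$ and form the rescalings $\tilde\zeta^k$ as in \eqref{coortran1}--\eqref{coortran2}. As in the proof of Lemma \ref{lemconti1}, a subsequence converges to $\bar\zeta$ in $C^{4+\alpha}_{loc}(\R^{n,+}\times(-\infty,0])\cap C^{2-\varepsilon}_{loc}(\overline{\R^{n,+}}\times(-\infty,0])$, and by Theorem \ref{thmbern},
\begin{equation*}
\bar\zeta(\varphi,\tau,\sigma)=\tfrac12 c_{nn}\tau^2+\tfrac12{\textstyle\sum_{i,j\le n-1}}c_{ij}\varphi_i\varphi_j-c_0\sigma.
\end{equation*}
Since $\tilde\zeta^k_\sigma$ coincides with $\zeta_t$ at the corresponding original point, this yields $\zeta_t\to -c_0$ uniformly, up to an $o(1)$ error, on the annulus $\{|\theta-\theta^k|\le\lambda_k,\ \lambda_k/k\le s\le\lambda_k,\ T-\lambda_k^2\le t\le T\}$.

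\emph{Step 2 (linear equation and iteration down to $s=0$).} Differentiating \eqref{max2} in $t$ and observing that $\tilde F$ is $t$-independent, $U=\zeta_t$ satisfies a homogeneous linear parabolic equation $\mathcal L(U)=0$ of exactly the form \eqref{max3}, with the same $L^\infty$-bounded uniformly parabolic coefficients. Setting $\widehat U^k=\varphi_k U$ with the cutoff from the proof of Lemma \ref{lemconti2}, choosing the dyadic scales $\delta_{k,\gamma}$ and weights $\varepsilon_{k,\gamma}$ from \eqref{dekl}, and using the barriers
\begin{equation*}
\sigma_{k,\gamma}^\pm=-c_0\varphi_k\pm\Big(\tfrac1k+C_1{\textstyle\sum_{l=0}^{\gamma-1}}\delta_{k,l}^{\bar a/2}+\varepsilon_{k,\gamma}\,\mathfrak r^{-1/\sigma_p}\Big),
\end{equation*}
the commutator $[\varphi_k,\mathcal L]U$ is controlled, via \eqref{aym-a1} and \eqref{asym-a-interior}, by $C(\lambda_k^{-1}\mathfrak r^{-1/2}+\lambda_k^{-2})$ on the support of $\nabla\varphi_k$, and vanishes elsewhere since $\mathcal L(U)=0$. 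The maximum principle argument identical to the one in Lemma \ref{lemconti2} then propagates the bound inductively from $\mathfrak r\approx\lambda_k^2$ down through the dyadic family, yielding $|\widehat U^k+c_0\varphi_k|\le k^{-1}+C_1\sum_{l=0}^{\gamma}\delta_{k,l}^{\bar a/2}$ on the $\gamma$-th annulus, with the geometric sum bounded by $Ck^{-\bar a}$ as in \eqref{est1}. For any $(\hat\theta,\hat s,\hat t)$ near $(0,0,T)$ with $\hat s>0$, pick $k$ and $\gamma$ so that this point lies in the $\gamma$-th annulus at scale $\lambda_k$; then $|\zeta_t(\hat\theta,\hat s,\hat t)+c_0|\le Ck^{-\bar a}$, and sending $(\hat\theta,\hat s,\hat t)\to(0,0,T)$ forces $k\to\infty$, so $\zeta_t\to -c_0$. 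In particular $c_0$ does not depend on the subsequence used in Step 1; defining $\zeta_t(0,0,T):=-c_0$ then gives the desired continuity.

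The main difficulty is verifying that the scale-by-scale barrier scheme of Lemma \ref{lemconti2} still closes for $U$: one must check that the weight $\varepsilon_{k,\gamma}\,\mathfrak r^{-1/\sigma_p}$ dominates the commutator forcing on each dyadic annulus while the sum $\sum_l\delta_{k,l}^{\bar a/2}$ tends to $0$ as $k\to\infty$. This is in fact strictly easier here than for $V=\zeta_{\mathfrak r}$, because the $t$-independence of $\tilde F$ removes the $\tilde F'$ term from the right-hand side of \eqref{max3}, so the analogue of the bound \eqref{hkc} on $\widehat h^k$ is cleaner and no new ideas are required beyond those established in Lemma \ref{lemconti2}.
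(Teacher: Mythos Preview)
Your proof is correct and follows essentially the same approach as the paper: blow up to identify the candidate limit $-c_0$, differentiate the equation in $t$ to obtain a linear parabolic equation for $V=\zeta_t$, and then invoke the barrier iteration of Lemma~\ref{lemconti2}. One minor correction: the linearized equation is not literally homogeneous, since the lower-order entries $\zeta+\sigma_p\mathfrak{r}\zeta_{\mathfrak r}$ on the diagonal of the matrix in \eqref{max2} produce a bounded zeroth-order contribution (equivalently a bounded right-hand side $\tilde h$, as in the paper's \eqref{eqn-ct2}), but this is harmlessly absorbed into your $\widehat h^k$ bound since $C\le C\lambda_k^{-1}\mathfrak r^{-1/2}$ on $\{0<\mathfrak r<\lambda_k^2\}$.
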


\begin{proof}
As in Lemma \ref{lemconti2}, we introduce the coordinates $(\varphi,\tau,\sigma)$ 
and function $\tilde \zeta^k$  satisfying \eqref{zeta12}. 
By the convergence \eqref{zeta12} and the interior regularity of equation \eqref{po3},
we can choose a subsequence, such that
\begin{equation}\label{zeta-ct}
\Big|-\tilde \zeta^k_\sigma(\varphi,\tau,\sigma)-c_{0}\Big| \le \frac 1{k}
  \ \   \text{in}\  \big\{(\varphi,\tau,\sigma)\ | \  |\varphi|\le 1,\	\frac 1k\le \tau\le 1, -1\le \sigma\le 0\big\}.
\end{equation}
Scaling back to $\zeta(\theta,s,t)$, we obtain
\begin{equation}\label{aym-ct1}
\Big|-\zeta_t(\theta,s,t)-c_{0}\Big| \le \frac 1{k}
  \ \  \ \text{in}\  \big\{(\theta,s,t) \ |\ |\theta|\le \lambda_k,\	\frac {\lambda_k}k\le s\le \lambda_k, -\lambda_k^2\le t-T\le 0 \big\}.
\end{equation}
Differentiating equation \eqref{po2} with respect to $t$ and taking $V=\zeta_t$, one gets
\begin{equation}\label{eqn-ct2}
\begin{split}
\mathcal L(V)=: 
   & - V_t + \tilde a^{nn}\left(V_{ss}+  (1+1/\sigma_p ) \frac{V_{s}}{s} \right)+  {\Small\text{$ \sum_{i,j=1}^{n-1} $}}\tilde a^{ij}V_{\theta_i\theta_j}
      +  {\Small\text{$ \sum_{i=1}^{n-1}$}} \tilde a^{ni} V_{s\theta_i}
 =\tilde h
 \end{split}
\end{equation}
where $\tilde a^{ij},\tilde h$ are all bounded functions and $(\tilde a^{ij})_{i,j=1}^n$ is uniformly elliptic.
Then following the proof of  Lemma \ref{lemconti2} yields the present lemma.
\end{proof}

We also have the following lemma.

\begin{lemma}\label{lemconti3}
Let $\zeta(\theta,s,t)$ be as in Theorem \ref{thm2}.
Then $\zeta_{\theta\theta}\in C(\mathbb S^{n-1}\times[0,1]\times (0,T])$.
\end{lemma}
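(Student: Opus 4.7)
The plan is to follow the same blow-up scheme used in Lemmas \ref{lemconti2} and \ref{lemconti-t}, adapted to the tangential second derivatives. Since $\zeta\in C^\infty(\mathbb S^{n-1}\times(0,1]\times(0,T])$ by interior parabolic regularity, it suffices to show continuity on $\{s=0\}$. Fix a boundary point, which we may take to be $(\theta^*,s,t)=(0,0,T)$.

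First I would rescale by \eqref{coortran1}--\eqref{coortran2} along a sequence $\lambda_k\to 0^+$. By the proof of Lemma \ref{lemconti1}, the rescaled functions $\tilde\zeta^k$ converge in $C^{4+\alpha}_{loc}(\mathbb R^{n,+}\times(-\infty,0])\cap C^{2-\eps}_{loc}(\overline{\mathbb R^{n,+}}\times(-\infty,0])$ to the same quadratic $\bar\zeta(\varphi,\tau,\sigma)=\tfrac12 c_{nn}\tau^2+\tfrac12\sum_{i,j=1}^{n-1}c_{ij}\varphi_i\varphi_j-c_0\sigma$. Interior $C^{4+\alpha}$ regularity for \eqref{po3} then yields, after passing to a subsequence,
\begin{equation*}
\bigl|\tilde\zeta^k_{\varphi_i\varphi_j}(\varphi,\tau,\sigma)-c_{ij}\bigr|\le \frac1k
\ \ \text{in}\ \bigl\{|\varphi|\le 1,\ \tfrac1k\le\tau\le 1,\ -1\le\sigma\le 0\bigr\}.
\end{equation*}
Scaling back and translating into the $\mathfrak r=s^2/4$ variable used in Lemma \ref{lemconti2} gives, as in \eqref{aym-a2},
\begin{equation*}
|\zeta_{\theta_i\theta_j}(\theta,\mathfrak r,t)-c_{ij}|\le \frac1k \ \ \text{in}\ \Bigl\{|\theta|\le\lambda_k,\ \tfrac{\lambda_k^2}{4k^2}\le\mathfrak r\le\tfrac{\lambda_k^2}{4},\ -\lambda_k^2\le t-T\le 0\Bigr\}.
\end{equation*}

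Next, I would derive the linear parabolic equation satisfied by $V=\zeta_{\theta_i\theta_j}$. Starting from \eqref{linear-l} and differentiating twice in $\theta_i,\theta_j$, the fact that $\tilde F$ is independent of $\theta$ together with the $C^{1,1}$ bounds \eqref{aym-a1} and the interior estimates \eqref{asym-a-interior} gives an equation of the form
\begin{equation*}
\mathcal L(V)=-V_t+\tilde a^{nn}\bigl(\mathfrak r V_{\mathfrak r\mathfrak r}+(2+\tfrac1{\sigma_p})V_\mathfrak r\bigr)+\!\!\sum_{i,j=1}^{n-1}\!\!\tilde a^{ij}V_{\theta_i\theta_j}+\!\!\sum_{i=1}^{n-1}\!\tilde a^{ni}\mathfrak r^{1/2}V_{\mathfrak r\theta_i}+\text{lower order}=\tilde h,
\end{equation*}
with $(\tilde a^{ij})$ uniformly elliptic and $|\tilde h|$ bounded by the constants in \eqref{aym-a1}. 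This is structurally identical to \eqref{max3}, except that the forcing no longer involves $\tilde F'$ because $\tilde F$ depends only on $\mathfrak r$.

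Then I would run the iterative barrier argument of Lemma \ref{lemconti2}. With the cut-off $\varphi_k$ and the auxiliary functions $\sigma^\pm_{k,\gamma}$ defined exactly as in \eqref{sigmapm}, the commutator term $[\varphi_k,\mathcal L]V$ obeys the same estimate \eqref{hkc}, and by the choice of $\delta_{k,\gamma},\eps_{k,\gamma}$ in \eqref{dekl}, the function $\hat V^k-\sigma^+_{k,\gamma}$ is a subsolution, is $\le 0$ on the relevant parabolic boundary by the inductive hypothesis, and tends to $-\infty$ as $\mathfrak r\to 0^+$. The maximum principle then propagates the oscillation bound to the next annulus, and iteration together with the geometric summation in \eqref{est1} yields
\begin{equation*}
\lim_{(\theta,s,t)\to(0,0,T)}\zeta_{\theta_i\theta_j}(\theta,s,t)=c_{ij}.
\end{equation*}
Since $c_{ij}$ is intrinsic to the blow-up limit and thus independent of the chosen sequence $\lambda_k$, the standard two-sequence argument at the end of Lemma \ref{lemconti2} gives continuity of $\zeta_{\theta_i\theta_j}$ up to $\{s=0\}$.

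The main obstacle is the derivation and verification of the linearized equation: one must check that twice differentiating \eqref{po2} in $\theta$ produces a linear operator of exactly the form \eqref{max3} with coefficients controlled by \eqref{aym-a1}--\eqref{asym-a-interior}, so that the barrier construction and iteration carry over verbatim. All the other steps are direct analogues of the arguments already carried out in Lemmas \ref{lemconti2} and \ref{lemconti-t}.
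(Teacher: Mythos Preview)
Your plan has a genuine gap at the linearization step. Differentiating \eqref{linear-l} (or \eqref{po2}) \emph{twice} in a tangential direction $\nu$ does not produce a linear equation for $V=\zeta_{\nu\nu}$ with bounded right-hand side. You get
\[
\frac{\zeta_{t,\nu\nu}}{\zeta_t}+W^{ij}\partial_{\nu\nu}W_{ij}
-\,W^{il}W^{kj}\,\partial_\nu W_{ij}\,\partial_\nu W_{kl}=0,
\]
and the quadratic term $W^{il}W^{kj}\partial_\nu W_{ij}\partial_\nu W_{kl}$ involves products of third derivatives such as $(\zeta_{\theta\theta\theta})^2$, $\zeta_{\theta\theta\theta}\zeta_{s\theta\theta}$, $(\zeta_{s\theta\theta})^2$. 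These are \emph{not} controlled by the $C^{1,1}$ bounds \eqref{aym-a1}; the interior estimates \eqref{asym-a-interior} only give $|D_\theta^3\zeta|\lesssim \mathfrak r^{-1/2}$, so the quadratic term is $O(\mathfrak r^{-1})$ and cannot be absorbed into a bounded $\tilde h$. This is precisely why the arguments in Lemmas \ref{lemconti2} and \ref{lemconti-t} (which differentiate only \emph{once}) do not carry over verbatim.

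The paper exploits the concavity of $\log\det$: the quadratic term has a sign, so one obtains only the differential \emph{inequality} $\mathcal L(V)\ge \tilde h$ with bounded $\tilde h$. The barrier iteration of Lemma \ref{lemconti2} then applies one-sidedly and yields only
\[
\varlimsup_{(\theta,s,t)\to(0,0,T)}\zeta_{\nu\nu}(\theta,s,t)\le c_{\nu\nu}
:=\inf_{\{\tilde\zeta^k\}\in\mathbb B}\lim_{k\to\infty}\tilde\zeta^k_{\nu\nu}(0',1,0).
\]
The reverse inequality requires a separate argument: since $\zeta_t$, $\zeta_{ss}$, $\zeta_{s\theta}$ and $\zeta_s/s$ are already continuous up to $\{s=0\}$ by Lemmas \ref{lemconti1}--\ref{lemconti-t}, equation \eqref{po2} reduces near $s=0$ to
\[
\det\bigl(D_\theta^2\zeta+\zeta I\bigr)=\text{const}+o(1),
\]
and the continuity of $D_\theta^2\zeta$ then follows from this determinant constraint combined with the one-sided upper bounds on all directional second derivatives (as in \cite[Lemma 4.5]{HTW}). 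Your proposal misses both the one-sidedness of the linearized inequality and this final step.
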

\begin{proof}
To prove the continuity of $  D_{\theta}^2\zeta$ on $\mathbb S^{n-1}\times\{s=0\}\times (0,T]$,  
it is enough to show $\lim_{s\rightarrow 0^+, \theta\to 0,t\rightarrow T^-}   D_{\theta}^2\zeta (\theta,s,t)$ exists.
By Lemma \ref{unif-ellip}, $   D_{\theta}^2 \zeta (\theta, s,t)$ is uniformly bounded.
Hence there is a sub-sequence  $s^k\rightarrow 0^+$ such that
$  D_{\theta}^2 \zeta (0, s^k,T)$ is convergent.
We introduce the coordinates $(\varphi,\tau,\sigma)$ and function $\tilde \zeta^k$
as in \eqref{coortran1} and \eqref{coortran2}, with $\theta^k=0$, $t^k=T$.

Let $\mathbb B$ denote the set of all convergent blow up sequences $\{\tilde \zeta^k\}$ 
given by \eqref{coortran2} (with $\theta^k=0$, $t^k=T$).
For any fixed unit vector $\nu\in \mathbb R^{n-1}$, define
\begin{equation}\label{blow-inf}
c_{\nu\nu}=\inf_{\{\tilde \zeta^k\} \in \mathbb B}\lim_{k\rightarrow +\infty}\tilde \zeta^k_{\nu\nu}(0',1,0)
\end{equation}
where $\tilde \zeta^k_{\nu\nu}=\tilde \zeta^k_{\theta_i\theta_j}\nu_i\nu_j$.
By a diagonal process, we can extract a subsequence in $\mathbb B$,
which for simplicity we still denote as  $\{\tilde \zeta^k\} $, such that
\begin{equation}
c_{\nu\nu}=\lim_{k\rightarrow +\infty} \tilde \zeta^k_{\nu\nu}(0',1,0).
\end{equation}\label{cgg}
We {\it claim}
\begin{equation}\label{add2}
 \varlimsup_{\theta\to 0, s\to 0^+,t\rightarrow T^-} \tilde \zeta_{\nu\nu}(\theta, s,t) \le c_{\nu\nu}.
\end{equation}

Indeed,
by the convergence \eqref{zeta12} and the interior regularity of equation \eqref{po3},
similarly to \eqref{zeta-c00} we can pass to a subsequence such that
\begin{equation*}
\big\| \tilde \zeta^k_{\nu\nu}(\varphi,\tau,\sigma)-c_{\nu\nu}\big\|_{L^\infty(Q_k)}\le \frac 1k\ \   \text{in}\  Q_k.
\end{equation*}
Scaling back to $\zeta(\theta,s,t)$, this implies
\begin{equation}\label{sigmak}
\big\|\tilde \zeta_{\nu\nu}(\theta,s,t)-c_{\nu\nu}\big\|_{L^\infty(\Sigma_k)}\le \frac 1k \
 \ \text{in}\ \Sigma_k.
\end{equation}
Here the domains $Q_k, \Sigma_k$  are the same as in
\eqref{zeta-c00} and \eqref{aym-aa2}.

To simplify the notation, let us denote the matrix in \eqref{po2} as $R=(r_{ij})_{i,j=1}^n$,
and rewrite equation \eqref{po2} as
\begin{equation}\label{po2-v1}
\mathcal F(\zeta_t,r_{ij})=: \log(-\zeta_t)+\log (\det R)=\log \bar F(s).
\end{equation}
Then  $\mathcal F$ is concave in its variables $r_{ij}$.
Differentiating \eqref{po2-v1} in direction $\nu$ twice and by the concavity, we have
\begin{equation*}
\frac{\zeta_{t,\nu\nu}}{\zeta_t}+\mathcal F_{r_{ij}}r_{ij,\nu\nu}\ge 0.
\end{equation*}
Denote $V=\zeta_{\nu\nu}$.
Similarly to \eqref{eqn-ct2}, one obtains
\begin{equation}\label{linear-1}
\begin{split}
\mathcal L(V)=: 
    & - V_t + \tilde a^{nn}\left(V_{ss}+  (1+1/\sigma_p ) \frac{V_{s}}{s} \right)
                     +  {\Small\text{$ \sum_{i,j=1}^{n-1} $}}\tilde a^{ij}V_{\theta_i\theta_j}
                      +  {\Small\text{$ \sum_{i=1}^{n-1}$}} \tilde a^{ni} V_{s\theta_i}
 \ge \tilde h ,
\end{split}
\end{equation}
where $\tilde a^{ij},\tilde h$ are all bounded functions and $(\tilde a^{ij})_{i,j=1}^n$ is uniformly elliptic.
Then following the proof of  Lemma \ref{lemconti2} yields \eqref{add2}.

To prove the convergence $ \lim_{\theta\to 0, s\to 0^+,t\rightarrow T^-} \zeta_{\nu\nu}(\theta, s,t)=c_{\nu\nu}$,
we make use of the equation \eqref{po2}.
By Lemmas \ref{lemconti1}-\ref{lemconti-t}, and noting that $ s \zeta_s = o(1)$ near $s=0$,
we can write \eqref{po2} as
\begin{equation}\label{add1}
\det( D^2_{\theta} \zeta+\zeta I)
=\frac{4}{\sigma_p^2}\left(\frac{1}{2c_0(1+1/\sigma_p)c_{nn}}\right)^{\frac 1p}+o(1) \ \text{ for }  (\theta,s,t)\text{ near }(0',0,T).
\end{equation}
The present lemma follows from the same argument as in Lemma 4.5 of \cite{HTW}.
\end{proof}

By Lemmas \ref{lemconti1} - \ref{lemconti3}, Theorem \ref{thm2} follows.

\vspace{5mm}
\section{Higher regularity for $\zeta$}\label{s6}
\vspace{3mm}

\subsection{\bf Regularity for linear parabolic equations}\label{cw-e}

Here we quote the $C^{2,\alpha}$ and $W^{2,p}$ estimates
for degenerate and singular  linear parabolic equations which will be needed later.

Given a point $p_0=(x_0, t_0)=(x'_0,x_{0,n},t_0)\in \mathbb R^{n,+} \times \R$ , denote
\begin{equation}\label{cylinder}
Q^*_\rho(p_0)=\{(x, t)\ |\
 x_n > 0, |x'-x'_0|< \rho, |x_n-x_{0,n}| < \rho^2, t_0-\rho^2 < t\le t_0\} ,
\end{equation}
which is a cylinder in $\mathbb R^{n,+} \times \R$.
When $p_0=(0, 0)$, we simply write  $Q^*_\rho=Q^*_\rho(p_0)$.

We first study the following linear degenerate operator
\begin{equation}\label{l++}
L_+ U=: - U_t + a_{nn}x_n\p_{nn} U
    + {\Small\text{$\sum_{i=1}^{n-1}$}} 2a_{in} \sqrt{x_n} \p_{in} U 
    + {\Small\text{$\sum_{i,j=1}^{n-1}$}} a_{ij} \p_{ij} U 
    +{\Small\text{$\sum_{i=1}^n$}}b_i \partial_i U      
\end{equation}
with variable coefficients $a_{ij}, b_i$ defined in the cylinder $Q^*_\rho$.

\begin{theorem}\label{T5.4}(Schauder estimate \cite{DH1999}).
Assume that the coefficients $a_{ij}, b_i\in C_\mu^{\alpha}(\overline{Q^*_\rho})$
for some $\alpha\in(0,1)$ and satisfy
\begin{equation}
{\begin{split}
  & a_{ij}\xi_i\xi_j\ge \Lambda^{-1} |\xi|^2  \ \ \  \forall\ \xi\in\mathbb R^n, \\
  &  \|a_{ij}\|_{C_\mu^{\alpha}(\overline{Q^*_\rho})}, \  \ \|b_{i}\|_{C_\mu^{\alpha}(\overline{Q^*_\rho})} \le \Lambda,
  \end{split}}
\end{equation}
and
\begin{equation}
b_n\ge \Lambda^{-1}  \ \   \text{at} ~~\, \{x_n=0\}
\end{equation}
for some positive constant $\Lambda$. Then for any given $\rho'\in(0,\rho)$,
there exists a constant $C$ depending only on $n, \alpha, \rho, \rho'$ and $\Lambda$, such that
\begin{equation}
 \|U\|_{C_\mu^{2+\alpha}(\overline{Q^*_{\rho'}})} 
       \le C \Big(\|U\|_{L^\infty(\overline{Q^*_\rho})} + \|L_+ U\|_{C_\mu^{\alpha}(\overline{Q^*_\rho})}\Big),
\end{equation}
for all functions $U\in C_\mu^{2+\alpha}(\overline{Q^*_\rho})$.
\end{theorem}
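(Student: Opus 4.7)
The plan is to prove this Schauder estimate by the standard Campanato perturbation strategy adapted to the degenerate geometry encoded by the metric $\mu$, following the scheme introduced by Daskalopoulos and Hamilton. The proof splits naturally according to how far the base point $p_0=(x'_0,x_{0,n},t_0)$ sits from the degenerate face $\{x_n=0\}$. When $x_{0,n}\geq c\rho^2$ one is far from the degeneracy, the metric $\mu$ is comparable to the parabolic distance after rescaling, and the operator $L_+$ reduces to a uniformly parabolic operator on the rescaled cylinder; there the classical Ladyzhenskaya--Solonnikov--Ural'tseva Schauder theory applies directly. The serious work is at points with $x_{0,n}\lesssim \rho^2$, where the second-order term degenerates and the first-order compensating term $b_n\partial_n U$ with $b_n\geq\Lambda^{-1}$ on $\{x_n=0\}$ must be used to generate enough regularity through the degenerate face.

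I would first analyze the constant-coefficient model operator obtained by freezing $a_{ij},b_n$ at a boundary point $p_0$:
\begin{equation*}
L_+^0 U = -U_t + \bar a_{nn}x_n U_{nn} + 2\textstyle\sum_i \bar a_{in}\sqrt{x_n}U_{in} + \sum_{i,j<n}\bar a_{ij}U_{ij} + \bar b_n U_n,
\end{equation*}
where lower-order $b_i$ terms for $i<n$ have been absorbed at the same scale. Under the change of variable $y_n=2\sqrt{x_n}$, together with a suitable affine change in the $x'$ variables that diagonalizes $(\bar a_{ij})$, the operator $L_+^0$ transforms into a uniformly parabolic operator of the form $-U_t+\Delta_y U+\frac{2\bar b_n-\bar a_{nn}}{y_n}U_{y_n}$ on the half-space $\{y_n>0\}$, i.e.~a Bessel-type parabolic equation. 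For this model I would establish (i) existence of solutions to the Cauchy--Dirichlet problem (or, when needed, a Neumann-type problem compatible with the vanishing of $\sqrt{x_n}U_{in}$ and $x_nU_{nn}$ at the face), (ii) polynomial approximation: for any solution $U$ bounded on $Q^*_\rho$ there is a polynomial $P$ of the correct type (degree two in $x'$, degree one in $x_n$ and $t$) such that $\|U-P\|_{L^\infty(Q^*_{\rho/2^k})}\leq C 2^{-k(2+\alpha)}\rho^{2+\alpha}$ in $\mu$-scale, for every $\alpha\in(0,1)$. Estimate (ii) follows from interior parabolic smoothness away from $\{y_n=0\}$ combined with boundary regularity for the Bessel operator, which in turn rests on barrier arguments using the explicit fundamental solution.

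With the model analysis in hand, the perturbation step is standard. For a general solution $U$ of $L_+U=f$ with $f\in C_\mu^\alpha$, I would define the iterated excess $\Phi(p_0,r)=\inf_P r^{-(2+\alpha)}\|U-P\|_{L^\infty(Q^*_r(p_0))}$ over admissible polynomials $P$ (with an appropriate $\mu$-weighted $L^\infty$ norm). Decomposing $L_+U=L_+^0 U + (L_+-L_+^0)U + f$ and using the H\"older continuity of the coefficients, one obtains the Campanato-type recursion
\begin{equation*}
\Phi(p_0,\tau r) \leq \tfrac12\Phi(p_0,r) + C\bigl(\|f\|_{C_\mu^\alpha}+\|U\|_{L^\infty}\bigr)
\end{equation*}
for a suitable $\tau\in(0,1)$ coming from the model decay (i)--(ii). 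Iteration yields a uniform bound on $\Phi$, and Campanato's characterization of $C_\mu^\alpha$ on the weighted metric space $(\overline{Q^*_{\rho'}},\mu)$ then upgrades this to the desired $\|U\|_{C_\mu^{2+\alpha}}$ estimate.

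The main obstacle, as usual in these degenerate Schauder theorems, is the boundary polynomial approximation for the model operator: one must show that a bounded solution to the Bessel-type problem in $\{y_n>0\}$ can be approximated near the face by polynomials with the prescribed $\mu$-anisotropic scaling. This requires careful control of the boundary behavior of $U_{nn}$, $\sqrt{x_n}U_{in}$ and $U_n$, which in the original $x$-coordinates is exactly the content of the norm \eqref{1.15}. Once that step is in place (through barrier constructions for the model, exploiting the sign condition $b_n\geq\Lambda^{-1}$ to force Neumann-type compatibility), the remaining perturbation-and-iteration argument is routine.
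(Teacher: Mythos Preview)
The paper does not prove this theorem. Theorem~\ref{T5.4} is simply quoted from \cite{DH1999} with the parenthetical attribution ``(Schauder estimate \cite{DH1999})'' and no argument whatsoever; the surrounding text in Section~\ref{cw-e} says explicitly ``Here we quote the $C^{2,\alpha}$ and $W^{2,p}$ estimates for degenerate and singular linear parabolic equations which will be needed later.'' So there is no ``paper's own proof'' to compare against.

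Your sketch is a faithful outline of the Daskalopoulos--Hamilton method that lies behind the cited result: freezing coefficients at a boundary point, passing to the Bessel-type model via $y_n=2\sqrt{x_n}$, polynomial approximation at the degenerate face, and Campanato iteration in the $\mu$-metric. That is indeed the route taken in \cite{DH1999}, and your description of the main difficulty (boundary polynomial approximation for the model operator, controlled by the sign condition $b_n\ge\Lambda^{-1}$) is accurate. But as far as the present paper is concerned, no proof is expected or given; the theorem is used as a black box in the proof of Theorem~\ref{thm4}.
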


We also need a local $W^{2,p}$ estimate for the following singular linear parabolic equation
\begin{equation}\label{new1}
  -U_t + {\Small\text{$\sum_{i,j=1}^{n}$}} a^{ij}\partial_{ij} U
        + {\Small\text{$\sum_{i=1}^{n-1}$}}b^i\partial_i U+\frac{b_n}{x_n}\partial_n U+cU=f  \quad   \text{in}\ Q_\rho.
\end{equation}
Here $Q_\rho$ is the cylinder defined in \eqref{box-r}.

By  \cite[Theorem 2.7]{DP2020},  we have the following a priori estimates.

\begin{theorem}\label{T5.3}
Let $U\in  W^{2,1}_{p}( Q_\rho,d\nu)$
be a solution to  \eqref{new1} with $f\in L^p_{\nu} (Q_\rho)$ for some $p>1$.
Assume that $a^{ij}, b^i, c\in C^0(\overline {Q_\rho})$ satisfy conditions \begin{equation}\label{new2}
\begin{split}
  &\Lambda^{-1} I_{n\times n} \le (a^{ij})_{i,j=1}^n  \le \Lambda I_{n\times n} \ \ \ \text{in} \ \overline
{Q_\rho},\\
& \frac{b^n}{a^{nn}}= b> 1 \ \ \text{is~a~constant}, \\
&|c|+{\Small\text{$\sum_{i=1}^{n}$}} |b^i|\le \Lambda\ \ \ \text{in} \ \overline
{Q_\rho},
\end{split}
\end{equation}
for some positive constant $\Lambda$, and
\begin{equation}\label{un0}
\lim_{x_n\to 0^+} x_n^b U_n(x',x_n,t)=0  \ \   \text{in} ~~ \overline{Q_\rho}.
\end{equation}
Then for any $\rho'\in(0,\rho)$, $U$ satisfies the estimate
\begin{equation}\label{est-fullp}
\|U\|_{W^{2,1}_{p}(Q_{\rho'},d\nu)}+\Big\|\frac{U_n}{x_n} \Big\|_{L^p_{\nu}(Q_{\rho'})}
      \le C\left(\|f\|_{L^p_{\nu}(Q_\rho)}+\|U\|_{L^{p}_{\nu}(Q_\rho)} \right),
\end{equation}
where  $C>0$ depends only on $p, n, \Lambda,b,\rho,\rho'$ and the modulus of continuity of $a^{ij}, b^i$ and $c$.

\end{theorem}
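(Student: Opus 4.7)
The plan is to follow the standard three-step scheme for weighted $W^{2,p}$ estimates for singular parabolic equations: reduce to a constant-coefficient model, obtain sharp bounds via the explicit heat kernel, and perturb back to variable coefficients.

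First I would reduce to a model operator. By a partition of unity and freezing of coefficients at a base point $p_0 \in \overline{Q_\rho}$, together with a linear change of variables in $x'$ that diagonalizes $(a^{ij}(p_0))_{i,j=1}^{n-1}$ and a rescaling in $x_n$ normalizing $a^{nn}(p_0)=1$, it suffices to treat the constant-coefficient model
\[
L_0 U = -U_t + \Delta_{x'} U + U_{nn} + \frac{b}{x_n}\, U_n = f,
\]
the first-order terms $b^i\partial_i U$ ($i\le n-1$) and the zero-order term $cU$ being absorbed as lower-order perturbations via interpolation. The weight $x_n^b\,dx$ is precisely the symmetrizing measure that makes the Bessel operator $\partial_{nn} + (b/x_n)\partial_n$ self-adjoint, which explains the appearance of this weight in the statement and makes $L^p(x_n^b\,dx\,dt)$ the natural functional setting.

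Next I would solve $L_0 U = f$ via Duhamel's formula using the explicit heat kernel for $L_0$. This kernel decomposes as a product of the standard Gaussian in $x'$ and the Bessel heat kernel on $\mathbb{R}^+$,
\[
p_t^{\mathrm{bess}}(x_n, y_n) = \frac{(x_n y_n)^{(1-b)/2}}{2t}\, \exp\!\Big(-\tfrac{x_n^2+y_n^2}{4t}\Big)\, I_{(b-1)/2}\!\Big(\tfrac{x_n y_n}{2t}\Big),
\]
where $I_\nu$ is the modified Bessel function of the first kind. The hypothesis \eqref{un0} is precisely the natural weighted Neumann-type condition at $\{x_n=0\}$ which makes the semigroup generated by $L_0$ well-defined and conservative on $L^p(x_n^b\,dx)$; without it the representation formula picks up boundary contributions. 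The desired estimates for $D^2 U$, $U_t$, and $U_n/x_n$ then reduce to $L^p$ boundedness of singular integrals whose kernels are the corresponding second derivatives of $p_t^{\mathrm{bess}}$.

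The hard part will be verifying that these singular-integral kernels satisfy the Calder\'on--Zygmund conditions in the space of homogeneous type $\bigl(\mathbb{R}^{n,+}\!\times\!\mathbb{R},\,d_{\mathrm{par}},\,x_n^b\,dx\,dt\bigr)$. This requires sharp pointwise bounds on $I_\nu(z)$ and its derivatives both as $z\to 0$ (where $I_\nu(z)\sim z^\nu/(2^\nu\Gamma(\nu+1))$) and as $z\to\infty$ (where $I_\nu(z)\sim e^z/\sqrt{2\pi z}$), from which one extracts the size bound and the H\"ormander-type regularity of the kernel in the parabolic quasi-metric. Once the $L^p$ estimate is established for $L_0$, the variable-coefficient case follows by a standard freezing argument: using the continuity of $a^{ij}, b^i, c$, choose $\rho_0$ small so that over every subcylinder of radius $\rho_0$ the coefficients deviate from their values at the center by a prescribed amount; the error $(L-L_{p_0})U$ then has arbitrarily small $L^p_\nu$ norm and is absorbed into the left-hand side. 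A covering argument produces the local estimate \eqref{est-fullp} on $Q_{\rho'}$, with the constant $C$ depending on $p,n,\Lambda,b,\rho,\rho'$ and the moduli of continuity of $a^{ij}, b^i, c$, as claimed.
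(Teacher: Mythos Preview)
The paper does not prove Theorem~\ref{T5.3}; it is quoted as \cite[Theorem~2.7]{DP2020} (Dong--Phan) and used as a black box in the proof of Theorem~\ref{thm4}. So there is nothing to compare your argument against in this paper.

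That said, your sketch is a reasonable route to such a result, though it differs from what Dong--Phan actually do. Their proof is not kernel-based: they use a mean-oscillation / level-set approach in the spirit of Krylov's $W^{2,p}$ theory, establishing sharp weighted mean-oscillation bounds for the model operator and then running a Fefferman--Stein type maximal function argument in the weighted space, with the variable-coefficient case handled by the usual small-oscillation perturbation. Your explicit Bessel-kernel approach is conceptually cleaner for the model operator but requires verifying the full Calder\'on--Zygmund package (size and H\"ormander conditions for each second-order kernel, plus the kernel for $U_n/x_n$) in the homogeneous-type structure $(\mathbb{R}^{n,+}\times\mathbb{R},\,d_{\mathrm{par}},\,x_n^b\,dx\,dt)$, which is technically heavier than you indicate; in particular the transition regime $x_n y_n \sim t$ for the Bessel kernel needs care. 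Either way, since the paper simply cites the result, your proposal goes well beyond what is needed here.
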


\vskip10pt

\subsection{\bf Higher regularity for $\zeta$}

By the $C^{2,\alpha}$ and $W^{2,p}$ estimates in Section \ref{cw-e},
we can prove higher order regularity for the function $\zeta$ defined in \eqref{zeta0}.

\begin{theorem}\label{thm4}
Let $\zeta(\theta,s,t)\in C^2 (\mathbb S^{n-1}\times [0,1]\times (0,T])$ be a solution to \eqref{po2}.
Assume that $\zeta_{s}(\theta,0,t)=0$\ $\forall~\theta\in \mathbb S^{n-1}$ and $t\in (0,T]$.
Then for $\tau>0$
\begin{equation} \label{festi}
\|\zeta\|_{ C^{2+\alpha}(\mathbb S^{n-1}\times [0,1]\times [\tau,T])}\le C(\mathcal M_0,n,p,\tau,T), 
\end{equation}
for some  $\alpha\in (0,1)$.
Moreover,
\begin{equation}\label{high-est-a}
\begin{split}
& \|D^{k}_{\theta,t}D_s^l\zeta\|_{L^\infty(\mathbb S^{n-1}\times [0,1]\times [\tau,T])}+\|D^{k}_{\theta,t}(\zeta_s/s)\|_{L^\infty(\mathbb S^{n-1}\times [0,1]\times [\tau,T])}\\
\le & C(\mathcal M_0,n,p,\tau,T,k), \ \forall\   k\in \mathbb N,\quad l=0,1,2.
\end{split}
\end{equation}
\end{theorem}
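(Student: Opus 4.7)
The plan is to prove \eqref{festi} and \eqref{high-est-a} by a bootstrap argument that exploits two facts: by Theorem \ref{thm2} together with Lemma \ref{unif-ellip}, equation \eqref{po2} is already uniformly parabolic with continuous coefficients on $\mathbb S^{n-1}\times[0,1]\times[\tau,T]$; and tangential/time derivatives of $\zeta$ satisfy linear singular parabolic equations that fit exactly the framework of the weighted $W^{2,p}$ estimate in Theorem \ref{T5.3}.

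Concretely, taking logarithms in \eqref{po2} and differentiating in $\theta_k$ (respectively in $t$), the function $U=\zeta_{\theta_k}$ (resp.\ $U=\zeta_t$) satisfies a linear equation of the form
\[
-U_t + a^{ij}U_{ij} + \sum_{i=1}^{n-1} b^i U_i + \frac{b_n}{s}\,U_s + cU = 0,
\]
in which $(a^{ij})$ is uniformly elliptic with continuous coefficients (inherited from $\zeta\in C^2$), the ratio $b_n/a^{nn}$ equals the constant $(2+\sigma_p)/\sigma_p>1$, and the vanishing condition $\lim_{s\to 0^+}s^{b}U_s=0$ holds because $U_s$ is bounded. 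Theorem \ref{T5.3} then yields $U\in W^{2,1}_p(Q',d\nu)$ together with $U_s/s\in L^p_\nu$ for every $p<\infty$, and for $p$ large the weighted parabolic Sobolev embedding supplied by \cite{DP2020} gives $U\in C^{1+\alpha}$. This provides H\"older control of all second derivatives of $\zeta$ except $\zeta_{ss}$, which is then recovered algebraically from \eqref{po2}: the lower--right $(n-1)\times(n-1)$ minor of the matrix $W$ is bounded away from zero by uniform positivity of $W$, so expanding the determinant along the singular row and solving for the $(n,n)$ entry $W_{nn}=\zeta_{ss}+\frac{2+\sigma_p}{\sigma_p}\zeta_s/s$ expresses $\zeta_{ss}$ in terms of quantities already in $C^\alpha$, taking account of the identity $\zeta_s/s=\int_0^1\zeta_{ss}(\theta,\sigma s,t)\,d\sigma$. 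This proves \eqref{festi}.

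For \eqref{high-est-a} one iterates the above: each further $\theta$ or $t$ differentiation of \eqref{po2} produces a linear singular equation of the same type for $D^k_{\theta,t}\zeta$, whose coefficients have H\"older regularity just upgraded by the previous step, so Theorem \ref{T5.3} can be applied inductively to give arbitrarily many bounded tangential and time derivatives. Since $\bar F$ depends only on $s$, $(\theta,t)$--differentiation introduces no new singularity on the right--hand side and the bootstrap closes. The $s$--derivatives of order $l=1,2$ are then obtained by reading off $\zeta_s$ and $\zeta_{ss}$ from the differentiated equation, using the already established control on $\zeta_s/s$. The main obstacle is the careful bookkeeping at each bootstrap step: one must verify both the uniform ellipticity of the newly produced coefficient matrix and, more delicately, the vanishing condition $\lim_{s\to 0^+}s^{b}\partial_s(\cdot)=0$ for the newly differentiated object, which requires tracking how the singular first--order term $\zeta_s/s$ inside $W$ propagates through repeated differentiation and intertwines with tangential derivatives.
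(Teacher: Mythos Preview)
Your proposal is correct and follows essentially the same strategy as the paper: differentiate \eqref{po2} tangentially in $\theta$ and $t$, apply the weighted $W^{2,p}$ estimate of Theorem \ref{T5.3} to the resulting singular linear equation (with $b=(2+\sigma_p)/\sigma_p>1$), use Sobolev embedding to upgrade to H\"older continuity of the mixed second derivatives, recover $\zeta_{ss}$ from the equation itself, and iterate. Two small points of execution differ: the paper extracts $\zeta_{ss}$ and $\zeta_s/s$ separately by writing \eqref{po2} as the ODE $\zeta_{ss}+\tfrac{2+\sigma_p}{\sigma_p}\tfrac{\zeta_s}{s}=\tilde f$ and integrating explicitly via the variation-of-constants formula (your identity $\zeta_s/s=\int_0^1\zeta_{ss}(\theta,\sigma s,t)\,d\sigma$ alone does not separate the two terms from H\"older continuity of their sum), and for the higher iteration the paper passes to the variable $\mathfrak r=s^2/4$ and invokes the Schauder estimate of Theorem \ref{T5.4} rather than reapplying Theorem \ref{T5.3}.
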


\begin{proof}
Differentiating \eqref{po2} with respect to $\theta_k$, $k=1,\cdots,n-1$, one gets
\begin{equation}\label{high1}
\begin{split}
\mathcal L_0(V)=:\  - V_t +  a^{nn}\Big( V_{ss}+  {\small\text{$  \frac{2+\sigma_p}{\sigma_p} $}} \frac{V_{s}}s \Big)
       +  {\Small\text{$ \sum_{i,j=1}^{n-1} $}} a^{ij}V_{\theta_i\theta_j} 
       +  {\Small\text{$ \sum_{i=1}^{n-1}$}}  a^{ni} V_{s\theta_i}  + b^n V_{s}=\bar h,
\end{split}
\end{equation}
where $V=\zeta_{\theta_k}$ and $a^{ij}$, $b^i$, $\bar h$ are
continuous functions of $s,\zeta, \zeta_t, s\zeta_s, \frac{\zeta_s}s, D^2\zeta$.
To apply the a priori estimates in Section \ref{cw-e} to equation \eqref{high1}, 
we express the equation in a {\it local coordinates} on $\mathbb S^{n-1}$.
By Theorem \ref{thm2},  $a^{ij}$, $\bar h$ and $V_s$ are continuous in $\theta,s$ and $t$.
By Lemma \ref{unif-ellip}, the operator $\mathcal L_0$ is uniformly parabolic.
Hence all the assumptions in Theorem \ref{T5.3} are fulfilled for $V$  with
$b=\frac{2+\sigma_p}{\sigma_p}>1$.
Hence we obtain
\begin{equation*}
\|\zeta_\theta\|_{W^{2,1}_{q}(\mathbb S^{n-1}\times [0,1]\times [\tau,T],d\nu)}\le C(\mathcal M_0,n,p,\tau,T,q), \ \   \forall\ q>1.
\end{equation*}

Similarly, differentiating \eqref{po2} with respect to $t$,
we obtain   \eqref{high1} for  $ V=\zeta_t$.
By Theorem \ref{T5.3}, we then obtain
\begin{equation*}
\|\zeta_t\|_{W^{2,1}_{q}(\mathbb S^{n-1}\times [0,1]\times[\tau,T],d\nu)}\le C(\mathcal M_0,n,p,\tau,T,q), \ \   \forall \ q>1.
\end{equation*}
Thus $\|D_{\theta,s}\zeta_\theta\|_{W^{1,1}_q (\mathbb S^{n-1}\times [0,1]\times[\tau,T],d\nu)}\le C$
and $\|\zeta_t\|_{ W^{1,1}_{q}(\mathbb S^{n-1}\times [0,1]\times[\tau,T],d\nu)}\le C$.
Letting $q>n+1+b$ and by the Sobolev embedding,
$W^{1,1}_{q}(d\nu)\to C^\alpha$ \cite[Lemma 4.65 and Lemma 4.66] {AdamsFournier2003},
we have $ D_{\theta,s} \zeta_{\theta} , \zeta_t \in C^\alpha(\mathbb S^{n-1}\times [0,1]\times (0,T])$.

Write equation \eqref{po2} in the form
\begin{equation}\label{ode1}
\zeta_{ss}+\frac{2+\sigma_p}{\sigma_p}\frac{\zeta_s}{s}=\tilde f, 
\end{equation}
where $\tilde f$ is a H\"older function of all its arguments  $s,\zeta, \zeta_t, D_{\theta,s} \zeta,  D_{\theta,s} \zeta_\theta$. 
Hence $\tilde f$ is H\"older continuous in $\theta, s, t$.
The solution to \eqref{ode1} is given by
\begin{equation}\label{ode2}
\zeta(\theta, s,t)=\zeta(\theta, 0,t)+\int_0^s  r^{-\frac{2+\sigma_p}{\sigma_p}}\int_0^r \lambda^{\frac{2+\sigma_p}{\sigma_p}} \tilde f(\theta,\lambda,t)d\lambda dr.
\end{equation}
Hence we have
\begin{equation*}\label{ode3}
\begin{split}
 \zeta_s(\theta,s,t)
    &=s^{-\frac{2+\sigma_p}{\sigma_p}}\int_0^s \lambda^{\frac{2+\sigma_p}{\sigma_p}} \tilde f(\theta,\lambda,t)d\lambda,\\
 \zeta_{ss}(\theta,s,t)
    &=-\frac{2+\sigma_p}{\sigma_p}s^{-\frac{2+\sigma_p}{\sigma_p}-1}
         \int_0^s \lambda^{\frac{2+\sigma_p}{\sigma_p}} \tilde f(\theta,\lambda,t)d\lambda  +\tilde f.
\end{split}
\end{equation*}
This implies $\zeta \in C^{2+\alpha}(\mathbb S^{n-1}\times [0,1]\times (0,T]) $
and $\frac{\zeta_s}{s} \in C^\alpha (\mathbb S^{n-1}\times [0,1]\times (0,T])$.
Recall that $\mathfrak r=\frac {s^2}4$,
we obtain $\zeta(\theta,\mathfrak r,t)\in C_\mu^{2+\alpha}(\mathbb S^{n-1}\times [0,1/4]\times (0,T])$.

Differentiating equation \eqref{po2} with respect to $\theta$ and $t$ again,
we obtain $D_{\theta,t}^k\zeta(\theta,\mathfrak r,t)\in C^{2+\alpha}_\mu(\mathbb S^{n-1}\times [0,1/4]\times (0,T])$,
by the Schauder estimate  in Section \ref{cw-e}. This also proves estiamates \eqref{high-est-a}. 
\end{proof}

 \begin{remark}
 The smoothness of the interface $\Gamma_t$  follows from the higher regularity of $\zeta$ in Theorem \ref{thm4}. Indeed,
 one can define the section $S_{1,\phi,t} =: \{x\in\R^n \ |\  \phi(x,t)<1\}$, which is the polar body of $\{y: v(y,t)=0\}$,  i.e.,
\begin{equation}\label{dual}
S_{1, \phi,t} =\big\{x\in\mathbb R^n \ |\  x\cdot y< 1\ \ \forall\ y\in \{y\	|\	v(y,t)=0\}\big\}.
\end{equation}
Hence $\p S_{1, \phi,t}$ is $C^k$ smooth ($k\ge 2$) and uniformly convex if and only if  the interface $\Gamma_t$ is.
\end{remark}
 
\begin{remark}\label{rem6.2}
Theorem \ref{thm4} also implies the conditions {\rm (I1)-(I4) } for $t\in (0,T^*)$.
Indeed,   {\rm (I1)} and {\rm (I2) } for $t\in (0,T^*)$ are easy to verify. 
The verification of $g(\cdot, t)\in C_\mu^{2+\alpha}(\overline{\{v>0\}})$ and $g_{ij}\tau_i g_j(\cdot,t)\in L^\infty$ need more computation. The calculations also can prove the  regularity of $g$ up to the interface $\Gamma_t$.
\begin{itemize}
\item[$(1)$] if $\frac{2}{\sigma_p}\in \mathbb Z^+$,   the function $g=\big(\frac{\sigma_p+1}{\sigma_p} v\big)^{\frac{\sigma_p}{\sigma_p+1}}$ is  smooth up to the interface $\Gamma_t$ on $0< t <T^*$;
\item[$(2)$] if $\frac{2}{\sigma_p}\notin \mathbb Z^+$, the function $g$ is $C_\mu^{\big[\frac{2}{\sigma_p}\big], 2+\frac{2}{\sigma_p} - \left[\frac{2}{\sigma_p}\right]}$ up to the interface $\Gamma_t$ on $0< t <T^*$.
\end{itemize}
The  proof of the  regularity for $g$ is cumbersome. Hence, we will present the details of the proof in a  future  work \cite{HWZ}. 
\end{remark}

By the a priori estimate \eqref{festi} and the continuity method \cite{L1996}, 
we obtain the existence of smooth solutions to equation \eqref{po2}.
By Remark 6.1, it implies the smoothness of the interface $\Gamma_t$,
and thus completes the proof of Theorem \ref{thmA}.

\end{document}